\documentclass[11pt]{article}

\usepackage[a4paper,margin=0.7in]{geometry}

\usepackage{amsmath,amssymb,amsthm,amsfonts,mathtools,bm}
\usepackage{graphicx}
\usepackage{multirow,booktabs,array}
\usepackage{enumitem}
\usepackage{algorithm}
\usepackage{algpseudocode}
\usepackage{mathrsfs}
\usepackage{xcolor}
\usepackage{float}
\usepackage{tikz}
\usepackage{subcaption}
\usepackage{epstopdf}
\usepackage{lmodern}
\usepackage{physics}
\usepackage{hyperref}

\hypersetup{
colorlinks=true,
linkcolor=blue,
citecolor=blue,
urlcolor=blue
}

\graphicspath{{figures/}}

\DeclareMathOperator{\sign}{sign}

\newcommand*{\medcap}{\mathbin{\scalebox{1.5}{\ensuremath{\cap}}}}
\newcommand{\Real}{\mathbb R}

\newtheorem{theorem}{Theorem}[section]
\newtheorem{lemma}[theorem]{Lemma}

\newtheorem{definition}[theorem]{Definition}
\newtheorem{remark}[theorem]{Remark}

\providecommand{\keywords}[1]{%
  \par\noindent\textbf{Keywords: }#1\par
}

\providecommand{\MSC}[1]{%
  \par\noindent\textbf{MSC Classification: }#1\par
}

\title{Nonlinear Parareal-Incomplete OSWR Method for the Coupled Reaction-Diffusion System: Convergence Analysis and Computational Strategies}

\author{
Gobinda Garai\thanks{Email: garai@math.cas.cz}\\
Institute of Mathematics, Czech Academy of Sciences, Prague, Czech Republic
\and
Nagaiah Chamakuri\thanks{Email: nagaiah.chamakuri@iisertvm.ac.in}\\
School of Mathematics, Indian Institute of Science Education and Research Thiruvananthapuram,\\ Trivandrum, India
}

\begin{document}

\maketitle

\abstract{This paper presents a development of a nonlinear extension of the Parareal-Incomplete Optimized Schwarz Waveform Relaxation (OSWR) method, aimed at efficiently simulating coupled reaction-diffusion systems, with particular attention to those encountered in cardiac electrophysiology.
The strategy leverages the synergy of spatial and temporal decomposition to tackle the computational challenges of large-scale, nonlinear simulations. 
The proposed approach is accompanied by a comprehensive convergence analysis. Extensive numerical experiments confirm the convergence of the nonlinear Parareal-Incomplete OSWR method. The results showcase its robust convergence behavior across various space-time subdomains, underscoring the method's reliability and effectiveness, especially in complex simulations involving multiple subdomains.}


\keywords{Coupled reaction-diffusion system, Parallel-in-Time (PinT), OSWR, Convergence analysis,  Parareal method.}

\MSC{65M12, 65Y05, 65M15, 65Y20}

\maketitle

\section{Introduction} \label{intro}

The parareal-in-time algorithm has been extensively studied to enable parallel computations by dividing the temporal integration interval into smaller segments \cite{lions2001parareal}. Similarly, optimized Schwarz waveform relaxation (OSWR) algorithms have been explored to address the computational challenges of solving large scale systems that arise from discretizing partial differential equations (PDEs) \cite{gander2007optimized}.
Our primary goal is to develop and refine parallel algorithms that combine the strengths of the parareal-in-time and OSWR methods, working effectively across both spatial and temporal domains for nonlinear PDEs. This effort focuses on solving the complexities of nonlinear coupled reaction-diffusion systems, such as the Rogers-McCulloch (RM) model \cite{rogers1994collocation}.
We denote a bounded connected domain by $\Omega \subset \Real^{d}$, here $d=1 \textrm{ or } 2$, with Lipschitz continuous boundary $\partial \Omega$. We represent the space-time domain and its lateral
boundary are by $Q = \Omega \times (0,T]$ and $\Sigma = \partial \Omega \times (0,T]$, respectively.
The RM model, constituting the focal point of our study, is governed by coupled reaction-diffusion equations given below.
\begin{equation}\label{RM}
\begin{cases}
\frac{\partial u}{\partial t}  = \gamma \Delta u - c_1u(a-u)(1-u) - c_2uv, & (x, t)\in Q\,,\\
\frac{\partial v}{\partial t} = bu - bc_3v , & (x, t)\in Q\,,\\
\frac{\partial u}{\partial \nu}=0, & (x, t)\in \Sigma,\\
u(x,0)=u_0, \text{ and } v(x,0)=v_0, & x\in \Omega,
\end{cases}
\end{equation}
where $\nu$ represents the outward unit normal to $\partial\Omega$. The problem parameters $\gamma, a, b, c_1, c_2$ and $c_3$ appear in \eqref{RM} are positive real numbers \cite{rogers1994collocation}.
The RM model is a phenomenological two-variable model and helps to study cardiac action potential propagation within the myocardium, which is the muscular tissue of the heart. The state variables $u$ and $v$ represent the spatial and temporal evolution of the transmembrane potential and the recovery variable, respectively. These models reduce the complex array of ion currents to two state variables that describe excitation and recovery through the cardiac tissue. 
Such kinds of models hold significance due to their ability to undergo thorough mathematical analysis of excitation and recovery processes, represented graphically in a phase plane where $u$ is plotted against $v$ \cite{keener2009mathematical}. Several modifications of the FitzHugh-Nagumo model have been proposed to mimic the rate sensitivity of action potential duration \cite{Aliev96, vanCapelle_CircRes80, rogers1994collocation, Mitchell_BMB03}. Our investigation specifically centers on the RM model \cite{rogers1994collocation}, notable for its maintenance of a stable resting potential in cardiac myocytes.

For brevity, we denote $f(u)= u(a-u)(1-u)$ and $\widetilde{f}(u)= (a-u)(1-u)$. Through a straightforward computation, we can readily derive the following pair of inequalities:
\begin{equation}\label{inequality1}
\widetilde{f}(u)\geq -\frac{(a-1)^2}{4},\;
f'(u)\geq -\frac{(a+1)^2}{3}.
\end{equation}

The state equations belong to the system of degenerate reaction-diffusion system.
For an isolated heart, their existence and uniqueness results were reported for phenomenological models
to the physiological models in \cite{Bourgault08, Nagaiah_COAP11, 
ColliFranzonea02, Veneroni_NARWA09, KW1}. For the discretization technique of the model \eqref{RM}: a collocation-based finite element method (FEM) is given in \cite{rogers1994collocation}, a nonlinear FEM with a posteriori estimates given in \cite{ratti2019posteriori, NK_APNUM10}. 
The numerical solution of the nonlinear coupled PDE-ODE systems related to cardiac electrophysiology poses a significant challenge due to the complex interplay of temporal and spatial scales inherent in wave excitation phenomena across the membrane \cite{Vigmond08:_solvers, Southern_TBE09, ColliFranzone_book14}. Furthermore, to accurately study wavefront propagation and phenomena like reentry without boundary effects, extensive observation times are required. These timescales need to be large enough to capture the intricate dynamics involved in arrhythmic events, which adds another layer of complexity to the computational modeling process. Previous studies on the monodomain model, including those mentioned above, have utilized time-stepping methods to evolve the equation. This requires sequential computations over prolonged periods to accurately capture long-term behavior in large spatial domains. 

To introduce parallelism in the temporal domain, we employ the Parareal method \cite{lions2001parareal}, originally devised by Lions, Maday, and Turinici. This method is renowned for its efficacy in tackling time-dependent PDEs by leveraging parallel computing techniques. The Parareal approach operates by subdividing the time interval into smaller segments and employing two solvers: a computationally inexpensive yet less precise solver known as the coarse solver and a computationally intensive yet highly accurate solver referred to as the fine solver. Subsequently, a correction step is executed to refine the solution at the coarse time points. The versatility of the Parareal technique is evident in its successful application across various domains, including fluid-structure interaction \cite{farhat2003time}, the Navier-Stokes equation \cite{fischer2005parareal}, parabolic and hyperbolic problems \cite{gander2007analysis}, the Cahn-Hilliard equation \cite{garai2024convergence}, time-periodic problems \cite{gander2013analysis}, molecular dynamics \cite{baffico2002parallel}, and quantum control problems \cite{maday2007monotonic, maday2002parareal, maday2003parallel}.

The Parareal method offers flexibility in selecting suitable coarse and fine solvers, providing researchers the freedom to tailor the method to their specific computational needs. For example, iterative techniques such as the Schwarz waveform relaxation (SWR) method \cite{gander1998space} or its optimized variant (OSWR) \cite{gander2007optimized} can serve as either the fine or coarse solvers or even both, giving rise to what is termed as the Parareal SWR (PSWR) method \cite{gander2019superlinear}. Moreover, recent advancements have explored the coupling of Parareal with Dirichlet-Neumann or Neumann-Neumann waveform relaxation schemes, as proposed in \cite{song2021analysis}.
It's important to note that by employing space-time domain decomposition (DD) methods as solvers, parallelism in space is introduced. This intricate process involves outer Parareal iterations alongside inner iterations corresponding to the space-time DD solvers, thus facilitating a two-level parallelization approach. To expedite computations, a strategy often employed is to prematurely halt the inner iterative solvers after a predefined number of iterations, even before reaching convergence. This anticipation relies on the expectation that overall convergence will be achieved through subsequent Parareal iterations, as discussed in \cite{maday2005parareal}.
Further exploration into the coupling of Parareal with incomplete non-overlapping OSWR iteration has been undertaken, particularly in the context of linear parabolic problems, as discussed and analyzed in \cite{bui2022coupling}.

The current developments underscore the continuous refinement and versatility of the Parareal and OSWR methods, offering promising avenues for accelerating the resolution of time-dependent PDEs across various scientific and engineering domains.
The main objective of our work is to develop and analyze a space-time parallel method for equation \eqref{RM} within the Parareal-incomplete OSWR framework. We begin by presenting the Parareal formulation, followed by an exhaustive investigation of the continuous-level non-overlapping OSWR method tailored for the RM model. Subsequently, we investigate an in-depth analysis of the convergence properties inherent in the Parareal-incomplete OSWR procedure. This endeavor represents a significant extension of the convergence framework established for the Parareal-incomplete OSWR method \cite{bui2022coupling}, particularly within a nonlinear and coupled PDEs context, which has not been investigated in the existing literature. The nonlinear framework we establish for convergence analysis holds promise for application across a spectrum of nonlinear PDEs or systems.


The remainder of this paper is structured as follows: In Section \ref{Section2}, we introduce the notation and explore the existence results for equation \eqref{RM}. Section \ref{Section2-Parareal} presents the Parareal formulation for equation \eqref{RM}. The formulation of the OSWR method for equation \eqref{RM} and its convergence results are discussed in Section \ref{Section3}. In Section \ref{Section4}, we delve into the convergence analysis of the coupled Parareal-incomplete OSWR. To demonstrate the accuracy and robustness of our proposed formulation, numerical results are provided in Section \ref{Section5}.

\section{Formulation of the Parareal Method} \label{Section2}
\subsection{Assumptions}
We first prescribed the assumptions on the nonlinear functions. Similar to \cite{ratti2019posteriori}, we  assume that the nonlinear functions \(\mathfrak {f}(u, v)=c_1u(a-u)(1-u) + c_2uv \) and \(\mathfrak {g }(u, v)= -bu + bc_3v\) satisfies the following  monotonicity condition:
\begin{equation}\label{monotone_fg}
\big(\mathfrak{f}(u_1, w_1) - \mathfrak{f}(u_2, w_2)\big)(u_1 - u_2) + \big(\mathfrak{g}(u_1, w_1) - \mathfrak{g}(u_2, w_2)\big)(w_1 - w_2)
\geq \lambda_{\mathfrak{f}\mathfrak{g}} \left( (u_1 - u_2)^2 + (w_1 - w_2)^2 \right),
\end{equation}
for all $u_1, u_2, w_1, w_2 \in \mathbb{R}$, where \( \lambda_{\mathfrak{f}\mathfrak{g}} > 0 \) is a constant.

We take some assumption on the splitting function $f_s$ which satisfies $f_s(u, u, v)=\mathfrak{f}(u, v)$. We assume monotonicity (or one-sided Lipschitz) in the first variable:
  \begin{equation}\label{osL_fs}
    \big(f_s(u_1, z, w) - f_s(u_2, z, w)\big)(u_1 - u_2) \geq -\lambda_{f_s} \|u_1 - u_2\|^2, \quad \forall u_1, u_2 \in L^2(\Omega),
  \end{equation}
  where $\lambda_{f_s} > 0$ is independent of the frozen variables $z, w \in L^2(\Omega)$. 
 This is a reasonable assumption because:
     $f_s$ is \emph{linear} in $u$,
    and all the nonlinear complexity is in the coefficients, which depend on $z, w$, which are known.

    Next, we assume global Lipschitzness of $f_s$ in the last two variables: 
    \begin{equation}\label{lip_fs}
    \| f_s(u, z_1, w_1) - f_s(u, z_2, w_2) \| \leq L_f \left( \| z_1 - z_2 \| + \| w_1 - w_2 \| \right), \quad \forall z_1, z_2, w_1, w_2 \in L^2(\Omega),
  \end{equation}
  for some constant $L_f > 0$. 

\subsection{Existence Results of the RM Model} 
We denote $H^m(\Omega)$ be the standard Sobolev space with $m=0,1,2$, where $H^0(\Omega)=L^2(\Omega)$. We also denote $\langle \cdot, \cdot \rangle$ be the $L^2$ inner product, and $\parallel \cdot \parallel$ be the $L^2$ norm. 
We denote two non-overlapping decompositions of the domain $\Omega$ by $\Omega_1$ and $\Omega_2$.
We denote $\mathcal{H}_i=H^2(\Omega_i)$ for $i=1, 2$ and $\mathcal{H}=\{ u\in L^2(\Omega): u_{|\Omega_i} \in \mathcal{H}_i\}$.

\subsection{Existence Results of \eqref{RM}}
Below, we list the existence results of the RM Model \eqref{RM} for an initial solution with different regularities.
\begin{theorem}\label{thm1}
Let $u_0, v_0\in L^2(\Omega)$. Then, there exists a unique weak solution to the RM model s.t
\begin{align*}
& u\in L^2(0,T;H^1(\Omega)) \medcap C(0,T;L^2(\Omega)) \medcap L^4(0,T;L^4(\Omega)), \frac{\partial u}{\partial t}\in L^{\frac{4}{3}}(0,T;(H^1(\Omega))^*)\\
\text{ and } & v\in L^{\infty}(0,T; L^2(\Omega)), \frac{\partial v}{\partial t}\in L^{2}(0,T; L^2(\Omega)).
\end{align*}
For $u_0\in H^2(\Omega)$ and $v_0\in L^2(\Omega)$, the weak solution $u, v$ satisfies the following 
\begin{align*}
& u\in L^2(0,T;H^2(\Omega)) \medcap L^{\infty}(0,T;H^1(\Omega))\medcap C(0,T;L^2(\Omega)), \frac{\partial u}{\partial t}\in L^{2}(0,T;L^2(\Omega)) \\
\text{ and } & v\in L^{\infty}(0,T; L^2(\Omega)), \frac{\partial v}{\partial t}\in L^{2}(0,T; L^2(\Omega)).
\end{align*}
\end{theorem}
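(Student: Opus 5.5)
We prove existence by a Faedo--Galerkin scheme with a priori estimates and compactness. Uniqueness follows from an energy estimate for the difference of two solutions, using the one-sided bounds \eqref{inequality1}.

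\textbf{Galerkin approximation.} Let $\{w_j\}$ be the Neumann eigenfunctions of $-\Delta$ on $\Omega$. They are orthonormal in $L^2(\Omega)$ and orthogonal in $H^1(\Omega)$. We seek
\[
u_m=\sum_{j\le m}\alpha_j(t)w_j,\qquad v_m=\sum_{j\le m}\beta_j(t)w_j,
\]
solving the projected system with initial data $P_mu_0$ and $P_mv_0$. This is a finite-dimensional ODE system with locally Lipschitz (polynomial) right-hand side, so it has a local solution. The estimates below make it global on $[0,T]$.

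\textbf{Basic energy estimate.} Test the $u$-equation with $u_m$ and the $v$-equation with $\kappa v_m$, where $\kappa>0$ is chosen later. Expanding,
\[
c_1f(u)u=c_1\big(u^4-(1+a)u^3+au^2\big)\ge \tfrac{c_1}{2}u^4-Cu^2
\]
by Young's inequality. The coupling terms are $c_2u^2v$ in the first equation and $-\kappa b uv$ in the second. The cubic term $c_2\int u_m^2v_m$ is the delicate one. We bound it by $\tfrac{c_1}{4}\|u_m\|_{L^4}^4+C\|v_m\|^2$, which follows from $u^2|v|\le \epsilon u^4+C_\epsilon v^2$. Gronwall then gives uniform bounds on $u_m$ in $L^\infty(L^2)\cap L^2(H^1)\cap L^4(L^4)$ and on $v_m$ in $L^\infty(L^2)$. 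From the equations themselves:
\begin{itemize}
\item $\partial_tv_m=b(u_m-c_3v_m)$ is bounded in $L^2(L^2)$;
\item $\partial_tu_m$ is bounded in $L^{4/3}((H^1)^*)$, since $u^3\in L^{4/3}(L^{4/3})$ and $uv\in L^2(L^1)$.
\end{itemize}
For the term $uv$ we need $uv\in L^{4/3}((H^1)^*)$. This holds by the embedding $H^1\subset L^4$ for $d\le 2$: $uv\in L^{4/3}$ in time with values in $(L^4)^*$, since $u\in L^4(L^4)$, $v\in L^\infty(L^2)$, and $L^4\cdot L^2\subset L^{4/3}=(L^4)^*$.

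\textbf{Passage to the limit.} By Aubin--Lions, $u_m\to u$ strongly in $L^2(L^2)$, so the nonlinearity $f(u_m)$ converges weakly. For $v$ we have no spatial compactness. However, $v$ solves a linear ODE,
\[
v_m(t)=e^{-bc_3t}P_mv_0+b\int_0^te^{-bc_3(t-s)}u_m(s)\,ds,
\]
so strong convergence of $u_m$ transfers to $v_m$ in $L^2(L^2)$. The product $u_mv_m$ is then handled as (strong in $L^2$)$\times$(strong in $L^2$) to give $L^1$ convergence, combined with the uniform bounds. Continuity $u\in C(0,T;L^2)$ follows from the Lions--Magenes lemma in the $L^4\cap H^1$ / dual setting.

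\textbf{Uniqueness.} For two solutions, set $\bar u=u_1-u_2$ and $\bar v=v_1-v_2$. Testing with $(\bar u,\kappa\bar v)$ and using $f'\ge -(a+1)^2/3$ from \eqref{inequality1} controls the $f$-difference by $C\|\bar u\|^2$. The coupling difference $c_2(u_1v_1-u_2v_2)$ gives
\[
c_2\int \bar u^2v_1+c_2\int u_2\bar v\bar u .
\]
These are the main obstacle, since $v$ has no spatial regularity. I would estimate them with Ladyzhenskaya/Gagliardo--Nirenberg in $d\le2$:
\[
\int|u_2\bar v\bar u|\le\|u_2\|_{L^4}\|\bar u\|_{L^4}\|\bar v\|\le \epsilon\|\bar u\|_{H^1}^2+C\|u_2\|_{L^4}^4\|\bar v\|^2+C\|\bar u\|^2,
\]
\[
\int|\bar u^2v_1|\le\|v_1\|\,\|\bar u\|_{L^4}^2\le \epsilon\|\nabla\bar u\|^2+C\|v_1\|^2\|\bar u\|^2 .
\]
Since $\|u_2\|_{L^4}^4\in L^1(0,T)$ and $v_1\in L^\infty(L^2)$, Gronwall gives $\bar u=\bar v=0$.

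\textbf{Regularity for $u_0\in H^2$.} Test the Galerkin $u$-equation with $-\Delta u_m$ and with $\partial_tu_m$; the eigenfunction basis makes these admissible test functions. With $-\Delta u_m$, the monotone part $3c_1\int u^2|\nabla u|^2$ has a favorable sign; the lower-order parts are absorbed via $f'\ge -(a+1)^2/3$. The coupling term $c_2\int uv\Delta u$ needs $\|uv\|$, which is bounded by $\|u\|_{L^\infty}\|v\|$. In $d\le2$ the factor $\|u\|_{L^\infty}$ is controlled through Agmon's inequality,
\[
\|u\|_{L^\infty}\le C\|u\|^{1/2}\|u\|_{H^2}^{1/2},
\]
which is absorbed with Young. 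This yields $u\in L^\infty(H^1)\cap L^2(H^2)$, after which $\partial_tu\in L^2(L^2)$ follows directly from the equation. The $v$ bounds are unchanged. I expect the coupling estimates ($u^2v$ in the energy, $uv$ in uniqueness and regularity) to be the only non-routine parts.
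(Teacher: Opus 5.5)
Your proposal is correct, and it follows the standard Faedo--Galerkin route (a priori estimates, Aubin--Lions compactness, a Gr\"onwall-type uniqueness argument, then higher-order estimates for the regular case); the paper gives no proof of its own and defers to exactly this kind of argument in \cite{Bourgault08, KW1} and \cite{jackson1990existence}. The one step to make explicit is that turning your bound on $\|\Delta u_m\|$ into $u\in L^2(0,T;H^2(\Omega))$, and applying Agmon's inequality through $\|\Delta u\|$, needs $H^2$-regularity of the Neumann Laplacian, i.e.\ a convex or $C^{1,1}$ domain rather than a merely Lipschitz one; the paper's statement silently relies on this assumption too.
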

The existence of a solution with a lower regularity of the initial solution can be found in \cite{Bourgault08, KW1}, while the existence results for initial solutions with higher regularity can be obtained by following a similar procedure, considering higher-order a priori estimates \cite{jackson1990existence}. 

\subsection{ Parareal Method} \label{Section2-Parareal}

Here, we recall the general Parareal method and extend it for the RM model.
To build the Parareal method, we first decompose the time domain $[0, T]$ into $N$ smaller time slices of uniform size, i.e., $[0, T]=\bigcup\limits_{n=1}^{N}[T_{n-1}, T_n]$ with $T_{n}- T_{n-1}=\Delta T=T/N$ is considered. Secondly, each time slice $[T_{n-1}, T_n]$ is divided into $J$ time steps with $\Delta t=\Delta T/J$, then a fine propagator $\mathtt{F}$, and a coarse propagator $\mathtt{G}$ are assigned to compute the solution in fine and coarse grid respectively.
Denoting ${U}_n^{l}$ and ${V}_n^{l}$, the value of the excitation variable and recovery variable at the coarse time points $T_n$'s in the $l$-th Parareal iteration. Let $\mathtt{X}_n^l=[U_n^l, V_n^l]^{\top}$, $\mathtt{F}(T_{n+1}, T_n, \mathtt{X}_n^{l})$ and $\mathtt{G}(T_{n+1}, T_n, \mathtt{X}_n^{l})$ be the fine and coarse operator. Then the Parareal method for the RM model starts with the initial approximation $\mathtt{X}_n^0$ at $T_n$'s, obtained by the coarse operator $\mathtt{G}$ and solves the following prediction-correction scheme for $l=0, 1,...$
\begin{equation}\label{parareal_rm}
    \begin{aligned}
       \mathtt{X}_0^{l+1} & =[ u_0, v_0]^{\top},\\
      \mathtt{X}_{n+1}^{l+1} & =\mathtt{G}(T_{n+1}, T_n, \mathtt{X}_n^{l+1})+\mathtt{F}(T_{n+1}, T_n, \mathtt{X}_n^l)-\mathtt{G}(T_{n+1}, T_n, \mathtt{X}_n^l)\,, \quad n=0,1,\cdots, N-1,
    \end{aligned}       
\end{equation}
where the operator $\mathtt{S}(T_{n+1}, T_{n}, \mathtt{X}_{n}^{l})$ computes the solution at $T_{n+1}$ by taking the initial solution $\mathtt{X}_{n}^{l}$ at $T_{n}$ for $\mathtt{S} = \mathtt{F}$ or $\mathtt{G}$. We use the following semi-implicit scheme to discretize our model 
\begin{equation}\label{discrete_RM}
\begin{aligned}
\frac{u^{n+1} -u^{n}}{\Delta t} & = \gamma \Delta u^{n+1} - c_1u^{n+1}(a-u^n)(1-u^n) - c_2u^{n+1}v^n,\\
\frac{v^{n+1} -v^{n}}{\Delta t} & = bu^{n+1} - bc_3v^{n+1}.
\end{aligned}
\end{equation} 
Below, we discuss the existence of the linear scheme \eqref{discrete_RM} under certain assumptions and given an auxiliary lemma, which we need later to show the convergence of the Parareal-OSWR method. To prove the existence of \eqref{discrete_RM}, it is enough to show the existence of a solution for \eqref{discrete_RM}$_1$, as the equation \eqref{discrete_RM}$_2$ is an algebraic one. We now fix a few notations to prove the existence of the solution of \eqref{discrete_RM}$_1$ via the Galerkin approach. Since the construction of approximate solutions via the Galerkin method and the subsequent passage to the limit follows standard arguments, we restrict ourselves to presenting the necessary a priori estimates. 
\begin{theorem}\label{existence_thm}
    For $\Delta t<\frac{1}{2\lambda_{f_s}}$, there exists a weak solution $(u, v)\in H^1(\Omega)\times L^2(\Omega)$ for the linear scheme \eqref{discrete_RM} with the homogeneous Neumann boundary condition associated with the RM model.
\end{theorem}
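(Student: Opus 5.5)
The plan is to treat \eqref{discrete_RM}$_1$ as a linear elliptic problem for $u^{n+1}$ with data $(u^n,v^n)\in H^1(\Omega)\times L^2(\Omega)$ held fixed. We then recover $v^{n+1}$ algebraically from \eqref{discrete_RM}$_2$:
\begin{equation*}
v^{n+1}=\frac{v^n+b\Delta t\,u^{n+1}}{1+bc_3\Delta t},
\end{equation*}
which lies in $L^2(\Omega)$ as soon as $u^{n+1}\in H^1(\Omega)$, because $1+bc_3\Delta t>0$. Writing $f_s(u^{n+1},u^n,v^n)=c_1u^{n+1}\widetilde f(u^n)+c_2u^{n+1}v^n$, the weak form of \eqref{discrete_RM}$_1$ with homogeneous Neumann data reads: find $u\in H^1(\Omega)$ such that
\begin{equation*}
\langle u,\phi\rangle+\gamma\Delta t\langle\nabla u,\nabla\phi\rangle+\Delta t\langle f_s(u,u^n,v^n),\phi\rangle=\langle u^n,\phi\rangle\quad\forall\phi\in H^1(\Omega).
\end{equation*}

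We would construct Galerkin approximations $u_m\in\mathrm{span}\{w_1,\dots,w_m\}$, using the Neumann Laplacian eigenfunctions. For fixed $m$ this is a finite-dimensional linear system, so existence of $u_m$ follows once we have uniqueness, equivalently an a priori bound. To get the bound, test with $\phi=u_m$. Since $f_s(0,\cdot,\cdot)=0$ by linearity in $u$, the one-sided Lipschitz assumption \eqref{osL_fs} gives $\langle f_s(u_m,u^n,v^n),u_m\rangle\ge-\lambda_{f_s}\|u_m\|^2$. Combining this with Cauchy--Schwarz and Young on the right-hand side yields
\begin{equation*}
\Big(\tfrac12-\lambda_{f_s}\Delta t\Big)\|u_m\|^2+\gamma\Delta t\|\nabla u_m\|^2\le\tfrac12\|u^n\|^2 .
\end{equation*}
The hypothesis $\Delta t<1/(2\lambda_{f_s})$ makes the first coefficient positive, so $\{u_m\}$ is bounded in $H^1(\Omega)$ uniformly in $m$. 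Applied to the homogeneous system ($u^n=0$), the same estimate shows the Galerkin matrix is nonsingular.

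Next we extract a subsequence with $u_m\rightharpoonup u$ weakly in $H^1(\Omega)$ and, by Rellich, strongly in $L^2(\Omega)$, and pass to the limit term by term for fixed $\phi=w_k$, then conclude by density. The main obstacle is the reaction term $\langle u_m\,(c_1\widetilde f(u^n)+c_2v^n),\phi\rangle$, because the coefficient is only integrable and not bounded: $\widetilde f(u^n)\in L^2$ in $d\le2$ via $H^1\subset L^4$, and $v^n\in L^2$. Our first attempt would be to take test functions in $H^1\cap L^\infty$, or smooth ones, and use that $u_m\to u$ strongly in $L^q$ for every $q<\infty$ (in $d=2$, $q<6$ in $d\le 3$). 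That makes the product converge in $L^1$ against bounded $\phi$, after which we extend by density.

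Alternatively, we would simply invoke the Lipschitz assumption \eqref{lip_fs}, which the paper treats as making $f_s$ a bounded operator into $L^2$, so the term passes to the limit by linearity and continuity. The limit $u$ is then a weak solution, and together with $v^{n+1}$ defined above this gives $(u,v)\in H^1(\Omega)\times L^2(\Omega)$.
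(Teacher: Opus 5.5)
Your proposal is correct and follows essentially the paper's argument. Both test the weak form of \eqref{discrete_RM}$_1$ with the solution, use \eqref{osL_fs} to obtain $\bigl(\tfrac{1}{2}-\lambda_{f_s}\Delta t\bigr)\|u\|^2+\gamma\Delta t\|\nabla u\|^2\le\tfrac12\|u^n\|^2$, extract a subsequence converging weakly in $H^1$ and strongly in $L^2$, pass to the limit in the term that is linear in $u$, and recover $v^{n+1}$ algebraically. The explicit Galerkin basis, the nonsingularity check, and the integrability treatment of $c_1\widetilde f(u^n)+c_2v^n$ add detail the paper omits. Note, however, that your direct limit argument assumes $u^n\in H^1(\Omega)$. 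The paper instead takes $u^n,v^n\in L^2(\Omega)$, which is what the coarse propagator in Lemma~\ref{Lipschitz_c_op_ct} requires, and relies on the assumed continuity of $f_s$ in its first argument; that is your fallback route.
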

\begin{proof}
   ~Let us denote $u^{n+1}:=u, v^{n+1}:=v, u^n:=s_1$ and $v^n:=s_2$ with $s_1, s_2\in L^2(\Omega)$. Observe that the equation \eqref{discrete_RM}$_1$ can be written in the following weak form: 
   \begin{equation}\label{existence_eq1}
       \int_{\Omega} \frac{1}{\Delta t} u \phi + \gamma  \nabla u \cdot \nabla \phi = -\int_{\Omega}  {f}_s(u, s_1, s_2)\phi + \frac{1}{\Delta t} s_1 \phi ,\; \text{for all} \; u, \phi \in H^1(\Omega).
   \end{equation}
   Letting $\phi=u$ in \eqref{existence_eq1} and using \ref{osL_fs} we obtain 
   \begin{equation}\label{apriori}
      \frac{1}{\Delta t} \parallel u\parallel^2 + \gamma \parallel \nabla u\parallel^2 \leq \lambda_{f_s} \parallel u\parallel^2 + \frac{1}{2\Delta t} \left( \parallel u\parallel^2 + \parallel s_1\parallel^2\right).  
   \end{equation}
   From \eqref{apriori} we have 
   \begin{equation}\label{apriori22}
      \left(\frac{1}{2\Delta t} - \lambda_{f_s} \right)\parallel u\parallel^2 + \gamma \parallel \nabla u\parallel^2 \leq  \frac{1}{2\Delta t}  \parallel s_1\parallel^2.  
   \end{equation}
   For $\Delta t<\frac{1}{2\lambda_{f_s}}$ its clear from \eqref{apriori22} the excitation variable $u$ is bounded in $H^1$ norm. So one can extract a weakly convergent subsequence in $H^1$. Consequently strongly $L^2$, which is needed to pass the limit for the function $f_s$, which is linear and continuous with respect to first variable. 
    Now from the algebraic equation \eqref{discrete_RM}$_2$ we have $(1+bc_3\Delta t)v=b\Delta t u +s_2 \in L^2(\Omega)$. Hence, the theorem.
\end{proof}

\begin{lemma}[Lipschitz condition for the coarse operator]\label{Lipschitz_c_op_ct}
The coarse operator in the predictor-corrector relation \eqref{parareal_rm}, given by \eqref{discrete_RM}, satisfies the Lipschitz condition 
\[
\| \mathtt{G}(t_2, t_1, \mathtt{X}) - \mathtt{G}(t_2, t_1, \mathtt{Y}) \| \leq C_{\texttt{Lip}} \| \mathtt{X} - \mathtt{Y} \|, \quad \forall ~ \mathtt{X}, \mathtt{Y} \in \left(L^2(\Omega)\right)^2,
\]
where $\mathtt{X} = [u, v]^{\top}$ and $\mathtt{Y} = [w, z]^{\top}$ for functions $u, v, w, z \in L^2(\Omega)$, and where the coarse time step $\Delta T := t_2 - t_1$ satisfies 
\begin{equation}\label{coarse_time_step}
\Delta T < \min\left\{ \frac{1}{2\lambda_{f_s}}, \frac{1}{b} \right\}.
\end{equation}
\end{lemma}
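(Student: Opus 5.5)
We plan to argue by an energy estimate on the difference of the two coarse solutions, in the same spirit as the a priori bound in the proof of Theorem \ref{existence_thm}. Let $(u_1,v_1) = \mathtt{G}(t_2,t_1,\mathtt{X})$ and $(u_2,v_2)=\mathtt{G}(t_2,t_1,\mathtt{Y})$. These are the weak solutions, furnished by Theorem \ref{existence_thm} since $\Delta T<\frac{1}{2\lambda_{f_s}}$, of one step of \eqref{discrete_RM} with data $(s_1,s_2)=(u,v)$ and $(w,z)$ respectively. Set $e=u_1-u_2\in H^1(\Omega)$ and $\eta = v_1-v_2$. Subtracting the two weak formulations \eqref{existence_eq1} gives, for all $\phi\in H^1(\Omega)$,
\[
\int_\Omega \frac{1}{\Delta T} e\phi + \gamma \nabla e\cdot\nabla\phi = -\int_\Omega \big(f_s(u_1,u,v)-f_s(u_2,w,z)\big)\phi + \frac{1}{\Delta T}\int_\Omega (u-w)\phi .
\]

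The key step is to split the nonlinear difference as
\[
\big(f_s(u_1,u,v)-f_s(u_2,u,v)\big) + \big(f_s(u_2,u,v)-f_s(u_2,w,z)\big).
\]
We then test with $\phi=e$. The first bracket is controlled from below by the one-sided Lipschitz assumption \eqref{osL_fs}, which gives $\geq -\lambda_{f_s}\|e\|^2$. The second bracket is handled by Cauchy--Schwarz together with the global Lipschitz assumption \eqref{lip_fs}, which gives the bound $L_f(\|u-w\|+\|v-z\|)\|e\|$. Dropping $\gamma\|\nabla e\|^2\ge 0$ and applying Young's inequality to the data term $\frac1{\Delta T}\langle u-w,e\rangle$ as in \eqref{apriori}, we obtain
\[
\Big(\frac{1}{2\Delta T}-\lambda_{f_s}\Big)\|e\|^2 \le \frac{1}{2\Delta T}\|u-w\|^2 + L_f\big(\|u-w\|+\|v-z\|\big)\|e\|.
\]
A second Young inequality absorbs part of $\|e\|^2$ on the left. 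The hypothesis $\Delta T<\frac1{2\lambda_{f_s}}$ keeps the left coefficient positive, leaving a strictly positive margin after absorption provided the absorbed fraction is chosen small relative to $\frac1{2\Delta T}-\lambda_{f_s}$. This yields $\|e\|\le C_1(\|u-w\|+\|v-z\|)$ with $C_1$ depending on $\Delta T,\lambda_{f_s},L_f$.

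The recovery variable is then immediate from the algebraic relation $(1+bc_3\Delta T)v_i = b\Delta T u_i + s_2$:
\[
\|\eta\| \le \frac{b\Delta T\|e\| + \|v-z\|}{1+bc_3\Delta T} \le \|e\|+\|v-z\|.
\]
This uses $b\Delta T<1$, which is where the second condition $\Delta T<\frac1b$ in \eqref{coarse_time_step} enters. Combining the two bounds and using $\|u-w\|+\|v-z\|\le\sqrt2\,\|\mathtt{X}-\mathtt{Y}\|$ for the product norm gives the claim with an explicit $C_{\texttt{Lip}}$.

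The main obstacle is bookkeeping rather than structure: the absorption in the $e$-estimate must be done carefully so that the constant stays finite under exactly the stated restriction $\Delta T<\frac{1}{2\lambda_{f_s}}$, without silently requiring a stronger condition. If a clean absorption fails, I would instead keep $\|e\|$ linear by dividing through by $\|e\|$, after bounding $\frac1{\Delta T}\langle u-w,e\rangle\le\frac1{\Delta T}\|u-w\|\|e\|$ directly by Cauchy--Schwarz rather than by Young. This gives $(\frac1{\Delta T}-\lambda_{f_s})\|e\|\le(\frac1{\Delta T}+L_f)\|u-w\|+L_f\|v-z\|$, which needs only $\Delta T<1/\lambda_{f_s}$ and is therefore implied by \eqref{coarse_time_step}.
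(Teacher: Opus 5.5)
Your proposal is correct. For the excitation variable it is the paper's argument: test the difference of the two coarse steps with the difference itself, split the $f_s$-difference into a first-argument part controlled by \eqref{osL_fs} and a frozen-argument part controlled by \eqref{lip_fs}, and close with Young's inequality. The paper's coefficient $\alpha_{t_2}=1-2\Delta T\lambda_{f_s}>0$ is your $\frac{1}{2\Delta T}-\lambda_{f_s}>0$ up to the factor $2\Delta T$.

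The routes differ for the recovery variable. The paper tests the $v$-difference equation with $\mathtt{v}(t)$ and applies Young's inequality while discarding the damping term $bc_3\|\mathtt{v}\|^2$. This yields $\beta_{t_2}=1-b\Delta T$, which is the only origin of the condition $\Delta T<1/b$. The paper then couples the two scalar inequalities through the lower-triangular matrix $\mathcal{M}$ and bounds everything by $\max\{1/\alpha_{t_2},1/\beta_{t_2}\}$ times a scalar. That final step silently drops off-diagonal contributions, both the entry $b\Delta T/(\alpha_{t_2}\beta_{t_2})$ of $\mathcal{M}^{-1}$ and the $\|\mathtt{v}(t_1)\|^2$ term in the first row of the right-hand matrix.

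Your direct solve of $(1+bc_3\Delta T)\eta=b\Delta T\,e+(v-z)$ is cleaner and gives a rigorously justified constant. It also shows more: $\|\eta\|\le b\Delta T\|e\|+\|v-z\|$ holds for every $\Delta T>0$, so the restriction $\Delta T<1/b$ is not needed at all. Your remark that this is where that condition \emph{enters} therefore overstates its role; it only lets you simplify $b\Delta T\le 1$.

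Your worry about the absorption is unfounded, since absorbing half of $(\frac{1}{2\Delta T}-\lambda_{f_s})\|e\|^2$ suffices. Your fallback via Cauchy--Schwarz and division by $\|e\|$ is also valid. It needs only $\Delta T<1/\lambda_{f_s}$ for the estimate itself, with $\Delta T<1/(2\lambda_{f_s})$ still used to invoke Theorem~\ref{existence_thm} as stated.
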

\begin{proof}~
Since the coarse operator in the Parareal method is the semi-implicit scheme \eqref{discrete_RM}, the functions $u(t), v(t), w(t), z(t)$ satisfy the equations
\begin{equation}\label{li1}
\begin{aligned}
\frac{u(t) -u(t_1)}{t-t_1} & = \gamma \Delta u(t) - {f}_s(u(t), u(t_1), v(t_1)),\; t\in[t_1, t_2],\\
\frac{v(t)-v(t_1)}{t-t_1} & = bu(t) - bc_3v(t), \; t\in[t_1, t_2],
\end{aligned}
\end{equation}
with $u(t_1)=\texttt{u}_0\in L^2(\Omega)$ and $v(t_1)=\texttt{v}_0\in L^2(\Omega)$, and
\begin{equation}\label{li2}
\begin{aligned}
\frac{w(t) -w(t_1)}{t-t_1} & = \gamma \Delta w(t) - {f}_s(w(t), w(t_1), z(t_1)),\; t\in[t_1, t_2],\\
\frac{z(t)-z(t_1)}{t-t_1} & = bw(t) - bc_3z(t), \; t\in[t_1, t_2],
\end{aligned}
\end{equation}
with $w(t_1)=\texttt{w}_0\in L^2(\Omega)$ and $z(t_1)=\texttt{z}_0\in L^2(\Omega)$. With this above setting we have $\mathtt{G}(t, t_1, \mathtt{X})=[u(t), v(t)]^{\top}$ and $\mathtt{G}(t, t_1, \mathtt{Y})=[w(t), z(t)]^{\top}$. Subtracting \eqref{li2}$_1$ from \eqref{li1}$_1$ and letting $\mathtt{u}(t):=u(t)-w(t)$ and $\mathtt{v}(t):=v(t)-z(t)$ we obtain 
\begin{equation}\label{li3}
\begin{aligned}
\frac{\mathtt{u}(t) -\mathtt{u}(t_1)}{t-t_1} & = \gamma \Delta \mathtt{u}(t) - \left( {f}_s(u(t), u(t_1), v(t_1)) - {f}_s(w(t), w(t_1), z(t_1)) \right),
\end{aligned}
\end{equation}
Multiply the equation \eqref{li3} with $\mathtt{u}(t)$ and integrate over $\Omega$, we have 
\begin{multline}\label{li4}
\parallel\mathtt{u}(t) \parallel^2 + (t-t_1) \gamma \parallel\nabla \mathtt{u}(t)\parallel^2= \langle \mathtt{u}(t_1), \mathtt{u}(t)\rangle - (t-t_1) \int_{\Omega} \left( {f}_s(u(t), u(t_1), v(t_1)) - {f}_s(w(t), w(t_1), z(t_1)) \right)  \mathtt{u}(t).
\end{multline}
Rewriting the difference term $\left( {f}_s(u(t), u(t_1), v(t_1)) - {f}_s(w(t), w(t_1), z(t_1)) \right)$ as follows
\begin{equation}\label{diff_fs}
\begin{aligned}
     {f}_s(u(t), u(t_1), v(t_1)) - {f}_s(w(t), w(t_1), z(t_1))  =  {f}_s(u(t), u(t_1), v(t_1)) - {f}_s(w(t), u(t_1), v(t_1)) \\
    + {f}_s(w(t), u(t_1), v(t_1)) - {f}_s(w(t), w(t_1), z(t_1)). 
\end{aligned}
\end{equation}
Using \eqref{diff_fs} in the last integral in \eqref{li4}, and using \eqref{osL_fs} and \eqref{lip_fs} we obtain 
\begin{multline}\label{li45}
\parallel\mathtt{u}(t) \parallel^2 + (t-t_1) \gamma \parallel\nabla \mathtt{u}(t)\parallel^2 \leq  \langle \mathtt{u}(t_1), \mathtt{u}(t)\rangle + (t-t_1)\lambda_{f_s} \parallel \mathtt{u}(t)\parallel^2 + (t-t_1)L_f\left( \parallel \mathtt{u}(t_1)\parallel + \parallel \mathtt{v}(t_1)\parallel\right)\parallel \mathtt{u}(t)\parallel.
\end{multline}
Using Cauchy-Schwarz and Young's inequality in \eqref{li45}, we have 
\begin{equation}\label{li46}
\alpha_t\parallel\mathtt{u}(t) \parallel^2 \leq  \left( 2+ 4(t-t_1)^2L_f^2 \right)\parallel \mathtt{u}(t_1)\parallel^2 + 4(t-t_1)^2L_f^2\parallel \mathtt{v}(t_1)\parallel^2,
\end{equation}
where $\alpha_t=\left(1-2(t-t_1)\lambda_{f_s} \right)$.
Here one has to choose $t\in[t_1, t_2]$ such that $\alpha_t>0$. Now subtracting \eqref{li2}$_2$ from \eqref{li1}$_2$ and taking $L^2$ inner product with 
$\mathtt{v}(t)$ we have 
\begin{equation}\label{li11}
\left\langle \frac{\mathtt{v}(t)-\mathtt{v}(t_1)}{t-t_1}, \mathtt{v}(t) \right\rangle= \langle b\mathtt{u}(t) - bc_3\mathtt{v}(t), \mathtt{v}(t)\rangle.
\end{equation}
Using the Cauchy-Schwarz and Young's inequality in \eqref{li11}, we obtain
\begin{equation}\label{li12}
\beta_t \parallel\mathtt{v}(t) \parallel^2  \leq b(t-t_1)\parallel \mathtt{u}(t)\parallel^2 +  \parallel \mathtt{v}(t_1)\parallel^2,
\end{equation}
where $\beta_t:=1-b(t-t_1)$. Here also one has to choose $t\in[t_1, t_2]$ such that $\beta_t>0$. From \eqref{li46} and \eqref{li12} we have
\begin{equation}\label{lip13}
\underbrace{\begin{bmatrix}
\alpha_t & 0\\
-b(t-t_1) & \beta_{t}
\end{bmatrix}}_{\mathcal{M}}
\begin{bmatrix}
\parallel \mathtt{u}(t) \parallel^2\\
\parallel \mathtt{v}(t) \parallel^2
\end{bmatrix} \leq 
\begin{bmatrix}
 \left( 2+ 4(t-t_1)^2L_f^2 \right) &  4(t-t_1)^2L_f^2 \\
0& 1
\end{bmatrix}
\begin{bmatrix}
\parallel \mathtt{u}(t_1) \parallel^2\\
\parallel \mathtt{v}(t_1) \parallel^2
\end{bmatrix} \leq \left( 2+ 4(t-t_1)^2L_f^2\right)
\begin{bmatrix}
\parallel \mathtt{u}(t_1) \parallel^2\\
\parallel \mathtt{v}(t_1) \parallel^2
\end{bmatrix}.
\end{equation}
Taking $t=t_2$ in \eqref{lip13} in such a way that $\alpha_{t_2}, \beta_{t_2} >0$, i.e., $\Delta T<\min\left\{ \frac{1}{2\lambda_{f_s}}, \frac{1}{b} \right\}$ we obtain
\begin{equation*}
\begin{bmatrix}
\parallel \mathtt{u}(t_2) \parallel^2\\
\parallel \mathtt{v}(t_2) \parallel^2
\end{bmatrix} \leq \left( 2+ 4(\Delta T L_f)^2\right) \rho_{\mathcal{M}^{-1}}
\begin{bmatrix}
\parallel \mathtt{u}(t_1) \parallel^2\\
\parallel \mathtt{v}(t_1) \parallel^2
\end{bmatrix},
\end{equation*}
where $\rho_{\mathcal{M}^{-1}}=\max \{ \frac{1}{\alpha_{t_2}}, \frac{1}{\beta_{t_2}} \}$. Hence, the Lemma is established for $C_{\texttt{Lip}}=\left( 2 + 4(\Delta T L_f)^2\right) \rho_{\mathcal{M}^{-1}}$.
\end{proof}


\section{Non-overlapping OSWR and Its Convergence}\label{Section3}
In this section, we introduce the OSWR method for the RM model \eqref{RM}. Let the spatial domain $\Omega = \mathbb{R}$ (or $\mathbb{R}^2$) is divided into two non-overlapping subdomains $\Omega_{1} = (-\infty, 0), \Omega_{2} = (0, \infty)$ in 1D and  $\Omega_{1} = (-\infty, 0)\times\mathbb{R}, \Omega_{2} = (0, \infty)\times\mathbb{R}$ in 2D. Set $\Omega_{j,T}:=\Omega_{j}\times(0, T], \Gamma_{T}:=\Gamma\times(0, T]$, where $\Gamma=\{ 0\}$ in 1D (or $\Gamma=\{ 0\}\times\mathbb{R}$ in 2D). The OSWR method for the RM model \eqref{RM} starts with initial guesses $u_i^0(0, t)$ for $t\in(0, T]$ along $\Gamma_{T}$ and then simultaneously solve for $k=1, 2, \cdots$
\begin{equation}\label{oswr1}
\begin{aligned}
&\left\{
\begin{aligned}
\frac{\partial u_i^k}{\partial t}  = \gamma \Delta u_i^k - c_1u_i^k(a-u_i^k)(1-u_i^k) - c_2u_i^kv_i^k, \,\ \text{in} \,\ \Omega_{i,T},\\
\frac{\partial v_i^k}{\partial t}  = bu_i^k - bc_3v_i^k ,\,\ \text{in} \,\ \Omega_{i,T},  \\
u_i^k(x, 0)
 = u_0,\,\ \text{in}\,\ \Omega_{i},\\
\mathtt{B}_i u_i^k
 = 
\mathtt{B}_i u_{3-i}^{k-1},
\,\ \mbox{on}\,\ \Gamma_{T},\\
\end{aligned}\right.
\end{aligned}
\end{equation}
where the interface operator $\mathtt{B}_i$ is given by 
\begin{equation}
\mathtt{B}_i = \gamma\frac{\partial}{\partial n_i} + p,
\end{equation}
with $n_i$'s as outward normal to space-time interface $\Gamma_T$ for $i=1, 2$, and $p\in\mathbb{R}^{+}$ is the Robin parameter which will accelerate the convergence of the OSWR method. We rewrite the transmission condition \eqref{oswr1}$_4$ in the following way
\begin{equation}
\mathtt{B}_i u_i^{k}=g_i^{k}\; \text{on}\; \Gamma_{T},  \text{where}\; g_i^{k}= \mathtt{B}_i u_{3-i}^{k-1}\; \text{on}\; \Gamma_{T}, 
\end{equation}
which is needed to write the OSWR method compactly. So it is easy to observe from \eqref{oswr1} that OSWR takes the initial solution $u_0, v_0$ and initial robin traces $\boldsymbol{g}^{0}=(g_1^{0}, g_2^{0})^{\top}$ as inputs and then it solves the subdomain problem simultaneously. Before going to the next iteration, one traces out the Robin data and updates by $\boldsymbol{g}^{1}$.  The procedure in compact form can be written as 
\begin{equation}
\left( [u^{K}, v^{K}]^{\top}, \boldsymbol{g}^{K} \right)= \text{OSWR}_{K} \left( u_0,v_0, \boldsymbol{g}^{0} \right), 
\end{equation}
where $K\in\mathbb{N}$ be the fixed iteration number, $u^{K}, v^{K}$ are the composite solution such that $u^{K}_{|\Omega_i}=u_i^{K}, v^{K}_{|\Omega_i}=v_i^{K}$.  

\subsection{Existence of Solution of Subproblems}
It suffices to investigate the existence of a solution for the first subdomain problem, after which the same approach can be extended to other subdomains. The first subdomain model problem is defined as follows.
\begin{equation}\label{RM_subproblem}
\begin{cases}
\frac{\partial u}{\partial t}  = \gamma \Delta u - c_1u(a-u)(1-u) - c_2uv, & (x,t)\in\Omega_1\times(0,T],\\
\frac{\partial v}{\partial t} = bu - bc_3v , & (x,t)\in\Omega_1\times(0,T],\\
\frac{\partial u}{\partial n}=0, & (x,t)\in\left(\partial\Omega_1 - \Gamma\right)\times(0,T],\\
\gamma\frac{\partial u}{\partial n} + pu
 = g,  & (x,t)\in\Gamma_T\\
u(x,0)=u_0,  \quad v(x,0)=v_0, & x\in \Omega_1.
\end{cases}
\end{equation}

To show the existence of a weak solution of the subproblem \eqref{RM_subproblem}, we adopt the framework given in \cite{caetano2010schwarz}. First, study the existence and uniqueness of the linear non-homogeneous problem corresponding to the nonlinear subproblem \eqref{RM_subproblem}. We then obtained the solution to the nonlinear problem using the Picard fixed point technique. In \eqref{linear_RM_subproblem}, we consider the linear problem associated with \eqref{RM_subproblem} 
\begin{equation}\label{linear_RM_subproblem}
\begin{cases}
\frac{\partial w}{\partial t}  = \gamma \Delta w + \mathtt{f}, & (x,t)\in\Omega_1\times(0,T],\\
\frac{\partial z}{\partial t} = bw - bc_3z , & (x,t)\in\Omega_1\times(0,T],\\
\frac{\partial w}{\partial n}=0, & (x,t)\in\left(\partial\Omega_1 - \Gamma\right)\times(0,T],\\
\gamma\frac{\partial w}{\partial n} + pw
 = g,  & (x,t)\in\Gamma_T\\
w(x,0)=u_0,  \quad z(x,0)=v_0, &  x\in \Omega_1 \,.
\end{cases}
\end{equation}
We 
define the solution space $\mathcal{S}_{w}(T):=W^{1,\infty}(0,T; L^2(\Omega))\medcap L^{\infty}(0, T; H^2(\Omega))\medcap H^1(0, T; H^1(\Omega))$ for the variable $w$, and 
$\mathcal{S}_{z}(T):=W^{1,\infty}(0,T; L^2(\Omega))\medcap H^1(0, T; L^2(\Omega)) \medcap L^{\infty}(0, T; H^1(\Omega))$ for the variable $z$.
\begin{theorem}[a priori estimates]\label{thm_existence_linear}
Let $u_0\in H^2(\Omega_1), v_0\in H^1(\Omega_1), \mathtt{f}\in H^1(0, T; L^2(\Omega_1))$ and $g\in H^1(0, T;L^2(\Gamma))\medcap L^{\infty}(0, T;H^{\frac{1}{2}}(\Gamma))$, then the problem \eqref{linear_RM_subproblem} has unique solution in $\mathcal{S}_{w}(T) \times \mathcal{S}_{z}(T)$ and satisfies the following estimate 
\begin{equation}\label{estimate_galerkin}
\begin{aligned}
\parallel (w, z)\parallel_{\mathcal{S}_{w}(T) \times \mathcal{S}_{z}(T)}^2 \leq \mathfrak{C}e^{\theta T} \left( \parallel u_0\parallel^2_{H^2(\Omega_1)} + \parallel v_0\parallel^2_{L^2(\Omega_1)}  + \parallel g\parallel^2_{H^1(0, T;L^2(\Gamma))\medcap L^{\infty}(0, T;H^{\frac{1}{2}}(\Gamma))} \right. \\
\left.+ \parallel \mathtt{f}\parallel_{H^{1}(0,T; L^2(\Omega_1))}^2 \right),
\end{aligned}
\end{equation}
where $\mathfrak{C}$ is depends on $b, c_3, \gamma, p$ and $\theta_1$, and $\theta=\max \left \{1+\frac{b}{4c_3}, 2bc_3 \right \}$.
\end{theorem}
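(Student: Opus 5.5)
The proof will follow the Faedo--Galerkin framework of \cite{caetano2010schwarz}. The plan is to derive a priori estimates for Galerkin approximations $(w_m,z_m)$, pass to the limit, and then obtain uniqueness from linearity.

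\emph{Galerkin setup.} Take eigenfunctions of the Laplacian on $\Omega_1$ with the Neumann condition on $\partial\Omega_1\setminus\Gamma$ and the Robin condition $\gamma\partial_n+p$ on $\Gamma$. Project the $w$-equation in weak form,
\begin{equation*}
\langle \partial_t w,\phi\rangle+\gamma\langle\nabla w,\nabla\phi\rangle+p\int_\Gamma w\phi=\langle \mathtt{f},\phi\rangle+\int_\Gamma g\phi .
\end{equation*}
Treat the $z$-equation as a linear ODE in $z$ driven by $w$. The finite-dimensional system is linear, so it is uniquely solvable on $[0,T]$.

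\emph{Energy estimates.} These come in three levels, each closed with Young's inequality and Gronwall's lemma, which produces the factor $e^{\theta T}$.
\begin{enumerate}
\item Test with $w$ and use the trace inequality $\|w\|_{L^2(\Gamma)}\le C\|w\|_{H^1}$ to absorb the $g$-term. This bounds $w$ in $L^\infty(L^2)\cap L^2(H^1)$.
\item Test the $z$-equation with $z$. This gives
\begin{equation*}
\tfrac12\tfrac{d}{dt}\|z\|^2+bc_3\|z\|^2\le b\|w\|\|z\|\le bc_3\|z\|^2+\tfrac{b}{4c_3}\|w\|^2,
\end{equation*}
which is where the constants $\tfrac{b}{4c_3}$ and $2bc_3$ in $\theta$ enter.
\item Test with $\partial_t w$. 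The boundary term $\int_\Gamma g\,\partial_t w$ must be integrated by parts in time, giving $\frac{d}{dt}\int_\Gamma g w-\int_\Gamma \partial_t g\, w$. This is why $g\in H^1(0,T;L^2(\Gamma))$ is required. The result is $\partial_t w\in L^2(L^2)$ and $w\in L^\infty(H^1)$.
\end{enumerate}

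\emph{Time-differentiated estimate.} Differentiate the $w$-equation in time and test with $\partial_t w$. This uses $\mathtt f\in H^1(0,T;L^2)$ and $\partial_t g$, and gives $\partial_t w\in L^\infty(L^2)\cap L^2(H^1)$, i.e.\ $w\in W^{1,\infty}(L^2)\cap H^1(H^1)$. The initial value $\partial_t w(0)=\gamma\Delta u_0+\mathtt f(0)$ must be bounded, which is where $u_0\in H^2(\Omega_1)$ is needed.

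\emph{Spatial regularity and the $z$-bounds.} For a.e.\ $t$, view $-\gamma\Delta w=\mathtt f-\partial_t w$ as an elliptic Robin problem with data $g(t)\in H^{1/2}(\Gamma)$. Elliptic regularity then gives
\begin{equation*}
\|w(t)\|_{H^2}\le C\bigl(\|\mathtt f-\partial_t w\|+\|g(t)\|_{H^{1/2}(\Gamma)}\bigr),
\end{equation*}
so $w\in L^\infty(H^2)$. For $z$, differentiate the ODE: $\partial_t z=bw-bc_3z$ lies in $L^\infty(L^2)$ and hence in $H^1(L^2)$. Applying $\nabla$ to the ODE and using $v_0\in H^1$ gives $z\in L^\infty(H^1)$.

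\emph{Limit and uniqueness.} The bounds are uniform in $m$, so weak-$*$ compactness gives a limit, and the equations are linear, so passing to the limit is immediate. Uniqueness follows by applying the first energy estimate to the difference of two solutions, which has zero data.

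\emph{Main obstacle.} I expect the hardest step to be the higher-order estimates involving the Robin boundary data. One difficulty is handling $\int_\Gamma \partial_t g\,\partial_t w$ in the time-differentiated estimate via the trace inequality and absorption into the $\gamma\|\nabla\partial_t w\|^2$ term. The other is the elliptic $H^2$ regularity on the half-space with Robin data, which forces the $L^\infty(H^{1/2}(\Gamma))$ assumption on $g$. The constant $\mathfrak C$ collects the trace, elliptic and Young constants, which depend on $\gamma$, $p$, $b$ and $c_3$.
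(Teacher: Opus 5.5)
Your proposal is essentially the paper's proof: the same hierarchy of a priori estimates (testing with $w$ and $z$, differentiating the system in time, testing with $\partial_t w$, differentiating the $z$-equation in space, and Robin elliptic regularity for $L^\infty(0,T;H^2)$) inside a Galerkin scheme, with only minor variations such as integrating $\int_\Gamma g\,\partial_t w$ by parts in time and reading $\partial_t z$ directly off the ODE. One caveat applies equally to the paper's proof: bounding $\partial_t w(0)=\gamma\Delta u_0+\mathtt f(\cdot,0)$ in $L^2(\Omega_1)$ requires the compatibility condition $\gamma\partial_n u_0+pu_0=g(\cdot,0)$ on $\Gamma$ (and $\partial_n u_0=0$ on $\partial\Omega_1\setminus\Gamma$), not just $u_0\in H^2(\Omega_1)$, and the $L^\infty(0,T;H^1)$ bound on $z$ puts $\|v_0\|_{H^1(\Omega_1)}$ rather than $\|v_0\|_{L^2(\Omega_1)}$ on the right-hand side of the estimate.
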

\begin{proof}~
\textit{First, we provide a priori estimate for $w\in L^{\infty}(0,T; L^2(\Omega_1))\medcap L^{2}(0,T; H^1(\Omega_1))$ and $z\in L^{\infty}(0,T; L^2(\Omega_1))$.} By computing the $L^2$ inner product of \eqref{linear_RM_subproblem}$_{1}$ with $w$ and \eqref{linear_RM_subproblem}$_{2}$ with $z$, and performing integration by parts across the subdomain $\Omega_1$, followed by the application of Cauchy-Schwarz and Young's inequality, and finally integrating over the temporal domain, we arrive at:
\begin{equation}\label{apriori1}
\begin{aligned}
\parallel w\parallel^2 + 2\gamma\int_{0}^t \parallel\nabla w \parallel^2 + p \int_0^t \parallel w \parallel^2_{\Gamma}  & \leq \parallel u_0\parallel^2 + \frac{1}{p}\int_0^t \parallel g \parallel^2_{\Gamma}
+ \int_0^t \parallel \mathtt{f} \parallel^2 + \int_0^t \parallel w \parallel^2,
\end{aligned}
\end{equation}
 
\begin{equation}\label{apriori2}
\begin{aligned}
\text{ and } \parallel z\parallel^2 & \leq \parallel v_0\parallel^2 + 
 2bc_3 \int_0^t \parallel z \parallel^2 + \frac{b}{4c_3}\int_0^t \parallel w \parallel^2.
\end{aligned}
\end{equation}
By adding \eqref{apriori1} and \eqref{apriori2} and using Gr\"{o}nwall's inequality we obtain 
\begin{equation}\label{apriori3}
\begin{aligned}
\parallel w\parallel_{L^{\infty}(0,T; L^2(\Omega_1))}^2+\parallel z\parallel_{L^{\infty}(0,T; L^2(\Omega_1))}^2+ 2\gamma\parallel \nabla w\parallel_{L^{2}(0,T; L^2(\Omega_1))}^2 + p\parallel w\parallel_{L^{2}(0,T; L^2(\Gamma))}^2 & \leq \\
e^{\theta T} \left( \parallel u_0\parallel^2 + \parallel v_0\parallel^2 + \frac{1}{p}\parallel g\parallel^2_{L^{2}(0,T; L^2(\Gamma))} + \parallel \mathtt{f}\parallel^2_{L^{2}(0,T; L^2(\Omega_1))} \right).
\end{aligned}
\end{equation}
\textit{Second, we provide a priori estimate for $\partial_t w\in L^{\infty}(0,T; L^2(\Omega_1))\medcap L^{2}(0,T; H^1(\Omega_1))$ and $\partial_t z\in L^{\infty}(0,T; L^2(\Omega_1))$.} To get that we differentiate the system \eqref{linear_RM_subproblem} with respect to $t$, and set $\bar{w}:=\partial_t w$ and $\bar{z}:=\partial_t z$. Then we have the following system
\begin{equation}\label{linear_RM_subproblem_diff_wrt_t}
\begin{cases}
\frac{\partial \bar{w}}{\partial t}  = \gamma \Delta \bar{w} + \partial_t\mathtt{f}, & (x,t)\in\Omega_1\times(0,T],\\
\frac{\partial \bar{z}}{\partial t} = b\bar{w} - bc_3\bar{z} , & (x,t)\in\Omega_1\times(0,T],\\
\frac{\partial \bar{w}}{\partial n}=0, & (x,t)\in\left(\partial\Omega - \Gamma\right)\times(0,T],\\
\gamma\frac{\partial \bar{w}}{\partial n} + p\bar{w}
 = \partial_t g,  & (x,t)\in\Gamma_T.
\end{cases}
\end{equation}
Multiplying \eqref{linear_RM_subproblem_diff_wrt_t}$_1$ by $\bar{w}$ and \eqref{linear_RM_subproblem_diff_wrt_t}$_2$ by $\bar{z}$, integrating by parts and invoking Cauchy-Schwarz and Young's inequality, and finally integrating with respect to time, we have  
\begin{equation}\label{apriori4}
\begin{aligned}
\parallel \partial_t w\parallel^2 + 2\gamma\int_{0}^t \parallel\nabla  \partial_t w \parallel^2 + p \int_0^t \parallel \partial_t w \parallel^2_{\Gamma}  & \leq \parallel \partial_t w_0\parallel^2 + \frac{1}{p}\int_0^t \parallel \partial_t g \parallel^2_{\Gamma}
+ \int_0^t \parallel \partial_t\mathtt{f} \parallel^2 + \int_0^t \parallel \partial_t w \parallel^2,
\end{aligned}
\end{equation}
\begin{equation}\label{apriori5}
\begin{aligned}
\text{ and } \parallel \partial_t z\parallel^2 & \leq \parallel \partial_t z_0\parallel^2 + 
 2bc_3 \int_0^t \parallel \partial_t z \parallel^2 + \frac{b}{4c_3}\int_0^t \parallel \partial_t w \parallel^2.
\end{aligned}
\end{equation}
By adding \eqref{apriori4} and \eqref{apriori5} and using Gr\"{o}nwall's inequality we obtain 
\begin{equation}\label{apriori6}
\begin{aligned}
\parallel \partial_t w\parallel_{L^{\infty}(0,T; L^2(\Omega_1))}^2+\parallel \partial_t z\parallel_{L^{\infty}(0,T; L^2(\Omega_1))}^2+ 2\gamma\parallel \nabla \partial_t w\parallel^2_{L^{2}(0,T; L^2(\Omega_1))} + p\parallel \partial_t w\parallel^2_{L^{2}(0,T; L^2(\Gamma))} &  \\
\leq e^{\theta T} \left( \parallel \partial_t w_0\parallel^2 + \parallel \partial_t z_0\parallel^2 + \frac{1}{p}\parallel \partial_t g\parallel^2_{L^{2}(0,T; L^2(\Gamma))} + \parallel \partial_t \mathtt{f}\parallel^2_{L^{2}(0,T; L^2(\Omega_1))} \right).
\end{aligned}
\end{equation}
Now to estimate $\parallel \partial_t z_0\parallel^2$ in \eqref{apriori6}, we multiply $\partial_t z$ to \eqref{linear_RM_subproblem}$_{2}$, integrate over the subdomain and evaluate at $t=0$, we obtain 
\begin{equation}\label{apriori7}
\parallel \partial_t z_0\parallel^2 \leq b^2 \parallel u_0\parallel^2.
\end{equation}
Similarly for the estimate $\parallel \partial_t w_0\parallel^2$ in \eqref{apriori6}, we obtain 
\begin{equation}\label{apriori8}
\parallel \partial_t w_0\parallel^2 \leq 2 \left( \gamma^2 \parallel \Delta u_0\parallel^2 + \parallel\mathtt{f}(\cdot,0)\parallel^2 \right).
\end{equation}
Using \eqref{apriori7} and \eqref{apriori8} in \eqref{apriori6} we get
\begin{equation}\label{apriori9}
\begin{aligned}
\parallel \partial_t w\parallel_{L^{\infty}(0,T; L^2(\Omega_1))}^2+\parallel \partial_t z\parallel_{L^{\infty}(0,T; L^2(\Omega_1))}^2+ 2\gamma\parallel \nabla \partial_t w\parallel^2_{L^{2}(0,T; L^2(\Omega_1))} + p\parallel \partial_t w\parallel^2_{L^{2}(0,T; L^2(\Gamma))} &  \\
\leq e^{\theta T} \left( b^2 \parallel u_0\parallel^2 + 2\gamma^2 \parallel  u_0\parallel_{H^2(\Omega_1)}^2 + \frac{1}{p}\parallel g\parallel^2_{H^{1}(0,T; L^2(\Gamma))} + 3\parallel \mathtt{f}\parallel^2_{H^{1}(0,T; L^2(\Omega_1))} \right).
\end{aligned}
\end{equation}
\textit{Third a priori estimate for $\nabla w\in L^{\infty}(0,T; L^2(\Omega_1))$.}
By multiplying \eqref{linear_RM_subproblem}$_{1}$ with $\partial_tw$, performing an integration by parts, employing Cauchy-Schwarz and Young's inequality, and subsequently integrating over the time domain, we arrive at
 \begin{equation}\label{apriori10}
\begin{aligned}
\parallel \partial_t w\parallel^2_{L^{2}(0,T; L^2(\Omega_1))} + \gamma\parallel \nabla w\parallel_{L^{\infty}(0,T; L^2(\Omega_1))}^2 + p\parallel w\parallel_{L^{\infty}(0,T; L^2(\Gamma))}^2 &  \leq \\
 \left(  p\parallel u_0\parallel^2_{\Gamma}+ \parallel \nabla  u_0\parallel^2 + \frac{1}{p}\parallel g\parallel^2_{L^{2}(0,T; L^2(\Gamma))} + \parallel \mathtt{f}\parallel^2_{L^{2}(0,T; L^2(\Omega_1))} + p\parallel \partial_t w\parallel^2_{L^{2}(0,T; L^2(\Gamma))}\right).
\end{aligned}
\end{equation}
\textit{Fourth a priori estimate for $\partial_tz\in L^{2}(0,T; L^2(\Omega_1))$.} We multiply $\partial_t z$ to \eqref{linear_RM_subproblem}$_{2}$, integrate over space and time, and using Cauchy-Schwarz and Young's inequality, we obtain 
\begin{equation}\label{apriori11}
\parallel \partial_t z\parallel_{L^{2}(0,T; L^2(\Omega_1))}^2 +bc_3 \parallel z\parallel_{L^{\infty}(0,T; L^2(\Omega_1))}^2 \leq bc_3\parallel v_0\parallel^2 + b^2 \parallel w\parallel_{L^{2}(0,T; L^2(\Omega_1))}^2.
\end{equation}
\textit{Fifth a priori estimate for $z\in L^{\infty}(0,T; H^1(\Omega_1))$.} To achieve this, we begin by differentiating \eqref{linear_RM_subproblem}$_{2}$ with respect to \( x \). Then, we multiply by \( \nabla v \), integrate over both space and time, and apply Cauchy-Schwarz and Young's inequalities to obtain
\begin{equation}
    \parallel \nabla v\parallel^2 + bc_3 \parallel v \parallel^2_{L^2(0, T; H^1(\Omega_1))} \leq \parallel \nabla v_0\parallel^2 + \frac{b}{c_3} \parallel u \parallel^2_{L^2(0, T; H^1(\Omega_1))}.
\end{equation}
\textit{Sixth a priori estimate for $w\in L^{\infty}(0,T; H^2(\Omega_1))$.} To obtain estimate in $H^2(\Omega_1)$ we pose the equation \eqref{linear_RM_subproblem}$_{1}$ as follow 
\begin{equation}\label{elliptic_problem}
\begin{cases}
 \Delta w =\frac{1}{\gamma}\left( \partial_t w- \mathtt{f}\right) \in L^{\infty}(0,T; L^2(\Omega_1)),\\
\frac{\partial w}{\partial n} 
 = \frac{1}{\gamma}\left( g-pw \right)  \in L^{\infty}(0,T; H^{\frac{1}{2}}(\Gamma)),
\end{cases}
\end{equation}
and then using the classical elliptic regularity result \cite{lions2012non} we obtain the following estimate 
\begin{equation}\label{apriori12}
\begin{split}
\parallel w\parallel_{L^{\infty}(0,T; H^2(\Omega_1))}   \leq 
 \theta_1\left( \parallel \mathtt{f} \parallel_{L^{\infty}(0,T; L^2(\Omega_1))} + \parallel g\parallel_{L^{\infty}(0,T; H^{\frac{1}{2}}(\Gamma))} + \parallel \partial_t w \parallel_{L^{\infty}(0,T; L^2(\Omega_1))} \right.\\ 
 \left. +\parallel w \parallel_{L^{\infty}(0,T; H^1(\Omega_1))}\right),
\end{split}
\end{equation}
where $\theta_1$ is some positive constant. So by the Galerkin procedure, we have the existence and uniqueness of the weak solution of the system \eqref{linear_RM_subproblem}, and it satisfies the estimate \eqref{estimate_galerkin}.
\end{proof}

Observe that for our problem $\mathtt{f}$ is given by $\mathtt{f} = \mathtt{f}_1 +\mathtt{f}_2$, where $\mathtt{f}_1(u)=-c_1u(a-u)(1-u)$ and $\mathtt{f}_2(u, v)=-c_2uv$. Next we show that for $(u, v)\in \mathcal{S}_w(T) \times \mathcal{S}_z(T)$, $\mathtt{f} \in H^1(0,T; L^2(\Omega_1))$.
\begin{lemma}[See \cite{caetano2010schwarz}]\label{lemma_ex}
Let $\Omega \subset \mathbb{R}^2$ be the regular domain and $\mathcal{F}\in \mathcal{C}^1(\mathbb{R})$. There exists a continuous positive increasing function $\psi$ such that for any $u, v\in H^2(\Omega)$,
\[
\parallel \mathcal{F}(u) -\mathcal{F}(v)\parallel \leq \psi(\max(\parallel u\parallel_{\infty}, \parallel v\parallel_{\infty})) \parallel u -v\parallel,
\]
where the function $\psi$ is given by $\psi(y) = \sup\limits_{\vert\xi\vert\in(0, y)} \vert \mathcal{F}'(\xi)\vert$.  
\end{lemma}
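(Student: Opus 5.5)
The plan is to combine the Sobolev embedding $H^2(\Omega)\hookrightarrow L^\infty(\Omega)$ with the mean value theorem applied pointwise. Since $\Omega\subset\mathbb{R}^2$ is regular (Lipschitz, or at least with the extension property), $H^2(\Omega)$ embeds continuously into $C(\overline{\Omega})$. Hence for $u,v\in H^2(\Omega)$ the quantities $\|u\|_\infty$ and $\|v\|_\infty$ are finite. Moreover, for almost every $x$ we have $|u(x)|,|v(x)|\le M:=\max(\|u\|_\infty,\|v\|_\infty)$. This is the only place where the dimension restriction and the $H^2$ regularity enter; in $d=2$ we have $2>d/2$, so the embedding holds.

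Next, I fix $x$ and apply the mean value theorem to the $\mathcal{C}^1$ function $\mathcal{F}$ on the segment between $v(x)$ and $u(x)$. This gives
\[
\mathcal{F}(u(x))-\mathcal{F}(v(x))=\mathcal{F}'(\xi_x)\,\big(u(x)-v(x)\big)
\]
with $\xi_x$ between $u(x)$ and $v(x)$, so $|\xi_x|\le M$. Therefore
\[
|\mathcal{F}(u(x))-\mathcal{F}(v(x))|\le \sup_{|\xi|\le M}|\mathcal{F}'(\xi)|\,|u(x)-v(x)| = \psi(M)\,|u(x)-v(x)|.
\]
I take $\psi(y)=\sup_{|\xi|\le y}|\mathcal{F}'(\xi)|$. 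The open versus closed interval in the statement is harmless: since $\mathcal{F}'$ is continuous, the supremum over $|\xi|<y$ and over $|\xi|\le y$ coincide for $y>0$. Squaring the pointwise bound and integrating over $\Omega$ then gives $\|\mathcal{F}(u)-\mathcal{F}(v)\|\le\psi(M)\|u-v\|$ in the $L^2$ norm.

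Finally, I check the stated properties of $\psi$. It is finite because $\mathcal{F}'$ is continuous on compact intervals. It is nondecreasing because the supremum is taken over nested sets. It is continuous because the modulus of continuity of $|\mathcal{F}'|$ is uniform on compacts. To make $\psi$ strictly positive and strictly increasing as the statement requires, I would, if needed, replace it by $\psi(y)+y+1$; this only weakens the inequality.

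The main (mild) obstacle is justifying the $L^\infty$ bound via the embedding. Measurability of $x\mapsto\xi_x$ is not an issue, because the argument uses only the pointwise inequality and never the function $\xi_x$ itself.
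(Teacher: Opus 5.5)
Your proposal is correct and is essentially the standard argument behind the cited result of Caetano et al.\ (the paper gives no proof of its own and simply defers to that reference): the embedding $H^2(\Omega)\hookrightarrow L^\infty(\Omega)$ for $d=2$ makes $\max(\|u\|_\infty,\|v\|_\infty)$ finite, and the pointwise mean value theorem with $\psi(y)=\sup_{|\xi|\le y}|\mathcal{F}'(\xi)|$ gives the $L^2$ estimate after squaring and integrating. Your remarks are also accurate: the open and closed suprema agree for $y>0$, $\psi$ is continuous, and the explicit formula only yields a nonnegative nondecreasing $\psi$, so strict positivity and monotonicity must be forced by a modification such as $\psi(y)+y+1$.
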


\begin{lemma}\label{fh_h1}
For $(u, v)\in \mathcal{S}_w(T) \times \mathcal{S}_z(T)$, $\mathtt{f}\in H^1(0,T; L^2(\Omega_1))$ and satisfies the following 
\begin{equation}\label{est_fh}
\begin{aligned}
\parallel\mathtt{f}(u)\parallel_{ H^1(0,T; L^2(\Omega_1))}^2 \leq T \psi \left( \parallel u\parallel_{L^{\infty}(0, T; L^{\infty}(\Omega_1))} \right)^2 \parallel u\parallel_{W^{1, \infty}(0, T; L^{2}(\Omega_1))}^2 \\ + 2T\parallel u\parallel_{L^{\infty}(0,T; L^{\infty}(\Omega_1))}^2  \parallel v\parallel_{W^{1, \infty}(0,T; L^{2}(\Omega_1))}^2 + C_{\texttt{emd}}T\parallel v\parallel_{L^{\infty}(0,T; H^{1}(\Omega_1))}^2  \parallel  u\parallel_{W^{1,\infty}(0,T; H^1(\Omega_1))}^2.
\end{aligned}
\end{equation}
\end{lemma}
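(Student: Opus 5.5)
The plan is to split $\mathtt{f}=\mathtt{f}_1+\mathtt{f}_2$ and bound the $L^2(0,T;L^2(\Omega_1))$ norms of each piece and of its time derivative. In each case we will pull out a sup in time, which is where the factor $T$ in \eqref{est_fh} comes from. Throughout we use the embedding $H^2(\Omega_1)\hookrightarrow L^\infty(\Omega_1)$, valid for $d\le 2$. Because $u\in L^\infty(0,T;H^2(\Omega_1))$, this makes $\parallel u\parallel_{L^\infty(0,T;L^\infty(\Omega_1))}$ finite, so Lemma~\ref{lemma_ex} can be applied pointwise in time.

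\emph{The cubic term $\mathtt{f}_1$.} Take $\mathcal{F}(s)=-c_1 s(a-s)(1-s)\in\mathcal{C}^1(\mathbb{R})$, for which $\mathcal{F}(0)=0$. Lemma~\ref{lemma_ex} with $v=0$ gives
\[
\parallel \mathtt{f}_1(u(t))\parallel\le \psi\bigl(\parallel u\parallel_{L^\infty(0,T;L^\infty)}\bigr)\parallel u(t)\parallel .
\]
For the time derivative we use $\partial_t\mathtt{f}_1(u)=\mathcal{F}'(u)\partial_t u$ and $|\mathcal{F}'(u)|\le\psi(\parallel u\parallel_{\infty})$, so $\parallel\partial_t\mathtt{f}_1\parallel\le\psi(\cdot)\parallel\partial_t u\parallel$. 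One can also get this from Lemma~\ref{lemma_ex} applied to difference quotients $(u(t+h)-u(t))/h$, followed by the limit $h\to 0$. Squaring, integrating over $(0,T)$ and bounding each integrand by its essential supremum gives the first term, $T\psi(\cdot)^2\parallel u\parallel_{W^{1,\infty}(0,T;L^2)}^2$.

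\emph{The bilinear term $\mathtt{f}_2=-c_2uv$.} For the $L^2$ part we estimate $\parallel uv\parallel\le\parallel u\parallel_\infty\parallel v\parallel$. The time derivative is $\partial_t(uv)=v\,\partial_t u+u\,\partial_t v$, and we bound $(A+B)^2\le 2A^2+2B^2$. The piece $u\,\partial_t v$ satisfies $\parallel u\,\partial_t v\parallel\le\parallel u\parallel_\infty\parallel\partial_t v\parallel$; together with the $L^2$ part, this yields $2T\parallel u\parallel_{L^\infty L^\infty}^2\parallel v\parallel_{W^{1,\infty}L^2}^2$. The constant $c_2^2$ is absorbed, or one normalizes as the statement implicitly does.

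\emph{The main obstacle} is the term $v\,\partial_t u$, because $v$ is only in $H^1$ in space and so is not bounded. Here we would use Hölder and Sobolev embedding in 2D:
\[
\parallel v\,\partial_t u\parallel\le\parallel v\parallel_{L^4}\parallel\partial_t u\parallel_{L^4}\le C_{\texttt{emd}}^{1/2}\parallel v\parallel_{H^1}\parallel\partial_t u\parallel_{H^1},
\]
where $H^1(\Omega_1)\hookrightarrow L^4(\Omega_1)$. Squaring and integrating in time, we bound $\parallel v\parallel_{H^1}$ by its $L^\infty(0,T)$ norm and $\parallel\partial_t u\parallel_{H^1}$ by the $W^{1,\infty}(0,T;H^1)$ norm. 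This produces the third term of \eqref{est_fh}. A subtle point is that $\mathcal{S}_w(T)$ only provides $\partial_t u\in L^2(0,T;H^1)$. If one wants to avoid assuming $W^{1,\infty}(0,T;H^1)$, the factor $T$ can be dropped and the $L^2$-in-time norm kept. Since the statement is phrased with $W^{1,\infty}(0,T;H^1)$, we interpret the right-hand side as allowing that norm, which may be infinite, in which case the bound holds trivially. Summing the three contributions gives \eqref{est_fh}, and in particular $\mathtt{f}\in H^1(0,T;L^2(\Omega_1))$.
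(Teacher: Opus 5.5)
Your proposal is correct and follows the paper's proof essentially step by step. Both split $\mathtt{f}=\mathtt{f}_1+\mathtt{f}_2$ and use Lemma~\ref{lemma_ex} (via Lemma 3.3 of \cite{caetano2010schwarz}) for the cubic part, $H^2\hookrightarrow L^\infty$ for $\mathtt{f}_2$ and $u\,\partial_t v$, H\"{o}lder in $L^4$ with $H^1\hookrightarrow L^4$ for $v\,\partial_t u$, and a sup in time to produce $T$. The regularity mismatch you flag is genuine and is also glossed over in the paper, since $\mathcal{S}_w(T)$ only gives $\partial_t u\in L^2(0,T;H^1(\Omega_1))$; it is your $L^2$-in-time variant of the third term, not \eqref{est_fh} itself, that actually yields $\mathtt{f}\in H^1(0,T;L^2(\Omega_1))$.
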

\begin{proof}~
Using Lemma \ref{lemma_ex} and following Lemma 3.3 in \cite{caetano2010schwarz}, we have 
\begin{equation}\label{fh1}
\parallel\mathtt{f}_1(u)\parallel_{ H^1(0,T; L^2(\Omega_1))} \leq \sqrt{T} \psi \left( \parallel u\parallel_{L^{\infty}(0, T; L^{\infty}(\Omega_1))} \right) \parallel u\parallel_{W^{1, \infty}(0, T; L^{2}(\Omega_1))}. 
\end{equation}
 Now we show $\parallel\mathtt{f}_2\parallel_{ H^1(0,T; L^2(\Omega_1))}$ is finite. As $u\in L^{\infty}(0,T; H^2(\Omega_1))$ then by embedding result, we have $u\in L^{\infty}(0,T; L^{\infty}(\Omega_1))$, therefore we obtain 
\begin{equation}\label{fh2}
\begin{aligned}
\parallel\mathtt{f}_2\parallel_{ L^2(0,T; L^2(\Omega_1))} &\leq \sqrt{T} \parallel u\parallel_{L^{\infty}(0,T; L^{\infty}(\Omega_1))}  \parallel v\parallel_{L^{\infty}(0,T; L^{2}(\Omega_1))} \\
&\leq \sqrt{T} \parallel u\parallel_{L^{\infty}(0,T; L^{\infty}(\Omega_1))}  \parallel v\parallel_{W^{1,\infty}(0,T; L^{2}(\Omega_1))}. 
\end{aligned}
\end{equation}
Next, we consider $\parallel\mathtt{f}_2'\parallel_{ L^2(0,T; L^2(\Omega_1))}$, and for that we have to show that $\parallel u \partial_t v\parallel_{ L^2(0,T; L^2(\Omega_1))}$ and $\parallel v\partial_t u\parallel_{ L^2(0,T; L^2(\Omega_1))}$ are finite. It's clear that  
\begin{equation}\label{fh3}
\begin{aligned}
\parallel u \partial_t v\parallel_{ L^2(0,T; L^2(\Omega_1))}^2  \leq T\parallel u \parallel_{ L^{\infty}(0,T; L^{\infty}(\Omega_1))}^2
\parallel \partial_t v\parallel_{ L^{\infty}(0,T; L^{2}(\Omega_1))}^2 \\ \leq T\parallel u \parallel_{ L^{\infty}(0,T; L^{\infty}(\Omega_1))}^2
\parallel v\parallel_{ W^{1,\infty}(0,T; L^2(\Omega_1))}^2. 
\end{aligned}
\end{equation}
As $v\in L^{\infty}(0,T; H^1(\Omega_1))$ and $\partial_t u\in L^{2}(0,T; H^{1}(\Omega_1))$, for the term $\parallel v \partial_t u \parallel^2_{L^2(\Omega_1)}$ we obtain 
\begin{equation*}
\int_{\Omega_1} v^2 (\partial_t u)^2 \leq \parallel v\parallel_{L^{4}(\Omega_1)}  \parallel \partial_t u\parallel_{L^4(\Omega_1)} \leq C_{\texttt{emd}} \parallel v\parallel_{H^{1}(\Omega_1)}  \parallel \partial_t u\parallel_{H^1(\Omega_1)}.
\end{equation*}
Therefore we have 
\begin{equation}\label{fh4}
\begin{aligned}
\parallel v\partial_t u\parallel_{ L^2(0,T; L^2(\Omega_1))}^2\leq C_{\texttt{emd}}T\parallel v\parallel_{L^{\infty}(0,T; H^{1}(\Omega_1))}^2  \parallel \partial_t u\parallel_{L^{\infty}(0,T; H^1(\Omega_1))}^2.
\end{aligned}
\end{equation} 
Finally, using \eqref{fh1}, \eqref{fh2}, \eqref{fh3} and \eqref{fh4} we obtain the estimate in \eqref{est_fh}.
\end{proof}

Next, we define a mapping, which is in the following form:
\begin{definition}
Let $u_0\in H^2(\Omega_1), v_0\in H^1(\Omega_1)$ and $g\in H^1(0, T;L^2(\Gamma))\medcap L^{\infty}(0, T;H^{\frac{1}{2}}(\Gamma))$. For $(u, v)\in \mathcal{S}_w(T) \times \mathcal{S}_z(T)$ the linear problem \eqref{fixed_point_map}
\begin{equation}\label{fixed_point_map}
\begin{cases}
\frac{\partial w}{\partial t}  = \gamma \Delta w + \mathtt{f}_1(u) + \mathtt{f}_2(u, v), & (x,t)\in\Omega_1\times(0,T],\\
\frac{\partial z}{\partial t} = bw - bc_3z , & (x,t)\in\Omega_1\times(0,T],\\
\frac{\partial w}{\partial n}=0, & (x,t)\in\left(\partial\Omega_1 - \Gamma\right)\times(0,T],\\
\gamma\frac{\partial w}{\partial n} + pw
 = g,  & (x,t)\in\Gamma_T\\
w(x,0)=u_0, \quad 
z(x,0)=v_0, & x\in \Omega_1.
\end{cases}
\end{equation}
has unique solution $(w, z)\in \mathcal{S}_w(T) \times \mathcal{S}_z(T)$ using Theorem \eqref{thm_existence_linear} and Lemma \eqref{fh_h1}, and naturally defines a map $(w, z) = \mathcal{A}(u, v)$ in $\mathcal{S}_w(T) \times \mathcal{S}_z(T)$.
\end{definition}

Next, we set $Q$ such that 
\begin{equation}\label{bound_Q}
Q^2 \geq 4 \mathfrak{C}\left( \parallel u_0\parallel^2_{H^2(\Omega_1)} + \parallel v_0\parallel^2_{H^1(\Omega_1)}  + \parallel g\parallel^2_{H^1(0, T;L^2(\Gamma))\medcap L^{\infty}(0, T;H^{\frac{1}{2}}(\Gamma))} \right),
\end{equation}
and define the time as 
\begin{equation}\label{def_time}
\widehat{T}(Q)=\sup \left\{ \widetilde{T}\leq T: \max\left( \frac{e^{\theta \widetilde{T}}}{2}, 2\widetilde{T}\mathfrak{C}e^{\theta \widetilde{T}} \left(  \psi(Q)^2 +2Q^2 + C_{\texttt{emd}}Q^2 \right), 2e^{\frac{\Theta \widetilde{T}}{2}}\sqrt{\widetilde{T}\theta_2} \right) \leq 1
\right\},
\end{equation}
where $\Theta = 1+b+ Q$ and $\theta_2 = \max \left\{ \psi(Q)^2 + \frac{c_2^2 Q^2C_{\texttt{emd}}^2}{\gamma}, Q \right\}$.

\begin{lemma}
For the set $\mathfrak{B}_Q:=\left\{ (w, z)\in \mathcal{S}_w(\widehat{T})\times\mathcal{S}_z(\widehat{T}):~ \parallel (w, z)\parallel_{\mathcal{S}_w(\widehat{T})\times\mathcal{S}_z(\widehat{T})} \leq Q \right\}$, the map $\mathcal{A}$ satisfies
$\mathcal{A}(\mathfrak{B}_Q)\subseteq \mathfrak{B}_Q$.
\end{lemma}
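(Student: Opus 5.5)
The plan is to combine the a priori estimate \eqref{estimate_galerkin} of Theorem~\ref{thm_existence_linear}, applied on the shortened interval $(0,\widehat{T})$, with the nonlinear bound \eqref{est_fh} of Lemma~\ref{fh_h1}. Then we use the choices \eqref{bound_Q} of $Q$ and \eqref{def_time} of $\widehat{T}(Q)$ to close the estimate.

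First fix $(u,v)\in\mathfrak{B}_Q$ and set $(w,z)=\mathcal{A}(u,v)$. Theorem~\ref{thm_existence_linear}, applied with source $\mathtt{f}=\mathtt{f}_1(u)+\mathtt{f}_2(u,v)$, gives
\begin{equation*}
\|(w,z)\|^2_{\mathcal{S}_w(\widehat{T})\times\mathcal{S}_z(\widehat{T})}\le \mathfrak{C}e^{\theta\widehat{T}}\Big(\|u_0\|^2_{H^2(\Omega_1)}+\|v_0\|^2_{H^1(\Omega_1)}+\|g\|^2_{H^1(0,\widehat T;L^2(\Gamma))\cap L^\infty(0,\widehat T;H^{1/2}(\Gamma))}+\|\mathtt{f}\|^2_{H^1(0,\widehat{T};L^2(\Omega_1))}\Big).
\end{equation*}
The norms of $g$ on $(0,\widehat{T})$ are dominated by those on $(0,T)$.

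Second, we bound the source term using \eqref{est_fh} with $T$ replaced by $\widehat{T}$. Every norm of $u$ and $v$ appearing there is controlled by $\|(u,v)\|_{\mathcal{S}_w\times\mathcal{S}_z}\le Q$:
\begin{itemize}
\item the $W^{1,\infty}(0,\widehat T;L^2)$ norms of $u$ and $v$ and the $L^\infty(0,\widehat T;H^1)$ norm of $v$ are parts of the definitions of $\mathcal{S}_w$ and $\mathcal{S}_z$;
\item $\|u\|_{L^\infty(L^\infty)}\lesssim\|u\|_{L^\infty(H^2)}\le Q$ by the Sobolev embedding in $d\le 2$, with the embedding constant absorbed into $C_{\texttt{emd}}$ or into $\psi$.
\end{itemize}
Since $\psi$ is increasing, this yields
\[
\|\mathtt{f}\|^2_{H^1(0,\widehat{T};L^2(\Omega_1))}\le \widehat{T}\big(\psi(Q)^2+2Q^2+C_{\texttt{emd}}Q^2\big)Q^2 .
\]

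Third, we combine the two bounds. By \eqref{bound_Q}, the data part is at most $\tfrac{Q^2}{4\mathfrak C}$, so after multiplying by $\mathfrak{C}e^{\theta\widehat T}$ it contributes at most $\tfrac{e^{\theta\widehat T}}{4}Q^2\le \tfrac{Q^2}{2}$, using the condition $e^{\theta\widehat T}/2\le 1$ in \eqref{def_time}. The source part contributes at most
\[
\widehat T\,\mathfrak{C}e^{\theta\widehat T}\big(\psi(Q)^2+2Q^2+C_{\texttt{emd}}Q^2\big)Q^2\le \tfrac{Q^2}{2},
\]
by the second condition in \eqref{def_time}. Summing gives $\|(w,z)\|^2\le Q^2$, so $\mathcal{A}(\mathfrak{B}_Q)\subseteq\mathfrak{B}_Q$.

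The main obstacle is the second step. The estimate \eqref{est_fh} mixes several norms, notably the term $\|\partial_t u\|_{L^\infty(0,T;H^1)}$ used in \eqref{fh4}. The space $\mathcal{S}_w$ only provides $\partial_t u\in L^2(0,T;H^1)$, via its $H^1(0,T;H^1)$ component. I would therefore redo \eqref{fh4} using this $L^2$-in-time control directly:
\[
\|v\,\partial_t u\|^2_{L^2(L^2)}\le C_{\texttt{emd}}\,\|v\|^2_{L^\infty(H^1)}\,\|\partial_t u\|^2_{L^2(H^1)}.
\]
This bound carries no factor $\widehat T$. To recover smallness, I would either interpolate or accept a slightly adjusted constant in the definition of $\widehat T$. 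Everything else is bookkeeping with the constants fixed in \eqref{bound_Q} and \eqref{def_time}.
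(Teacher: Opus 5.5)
Your skeleton is the paper's proof: apply \eqref{estimate_galerkin} on $(0,\widehat T)$, bound the source by \eqref{est_fh} with every norm replaced by $Q$, and close with \eqref{bound_Q} and \eqref{def_time}. Your Step~3 (data part $\le \tfrac{e^{\theta\widehat T}}{4}Q^2\le\tfrac{Q^2}{2}$, source part $\le\tfrac{Q^2}{2}$) is exactly the arithmetic behind the paper's passage from \eqref{estimate_in_BQ2} to $Q^2$. The obstacle you flag is genuine, and the paper does not resolve it either. It bounds $\|u\|_{W^{1,\infty}(0,\widehat T;H^1(\Omega_1))}$ in \eqref{est_fh} by $Q$, although $\mathcal S_w$ only controls $\partial_t u$ in $L^\infty(0,\widehat T;L^2)\cap L^2(0,\widehat T;H^1)$.

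Your proposal leaves this step open, so as written it has a gap. Your bound $\|v\,\partial_t u\|^2_{L^2(L^2)}\le C_{\texttt{emd}}\|v\|^2_{L^\infty(H^1)}\|\partial_t u\|^2_{L^2(H^1)}$ is correct, but it contributes $\mathfrak{C}e^{\theta\widehat T}C_{\texttt{emd}}Q^4$ with no power of $\widehat T$. Shrinking $\widehat T$ cannot make this $\le Q^2/2$: that would need $Q^2\lesssim 1/(\mathfrak{C}C_{\texttt{emd}})$, which is incompatible with \eqref{bound_Q} for large data. So ``adjusting a constant'' does not work. The missing idea is to use \emph{also} the $L^\infty(L^2)$ control of $\partial_t u$, which your bound discards.

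For $d=1$, $H^1\subset L^\infty$ gives $\|v\,\partial_t u\|\le C\|v\|_{H^1}\|\partial_t u\|\le CQ^2$ pointwise in time. The factor $\widehat T$ then survives, and \eqref{def_time} works with $C_{\texttt{emd}}$ reinterpreted. For $d=2$, Ladyzhenskaya's inequality $\|\phi\|_{L^4}^2\le C\|\phi\|\,\|\phi\|_{H^1}$ and Cauchy--Schwarz in time give
\[
\int_0^{\widehat T}\|v\,\partial_t u\|^2\le C\|v\|^2_{L^\infty(H^1)}\,\|\partial_t u\|_{L^\infty(L^2)}\int_0^{\widehat T}\|\partial_t u\|_{H^1}\le C\sqrt{\widehat T}\,\|v\|^2_{L^\infty(H^1)}\,\|\partial_t u\|_{L^\infty(L^2)}\,\|\partial_t u\|_{L^2(H^1)}\le C\sqrt{\widehat T}\,Q^4 .
\]
So the second entry of \eqref{def_time} must become something like $2\mathfrak{C}e^{\theta\widetilde T}\big(\widetilde T(\psi(Q)^2+2Q^2)+C\sqrt{\widetilde T}\,Q^2\big)\le 1$. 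This still tends to $0$ as $\widetilde T\to 0$, so $\widehat T(Q)>0$ exists and your Step~3 goes through verbatim. What is then proved, however, is invariance of $\mathfrak{B}_Q$ for this modified $\widehat T$, not for \eqref{def_time} as written.

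Likewise, absorbing the $H^2\subset L^\infty$ constant $C_\infty$ ``into $\psi$'' actually produces $\psi(C_\infty Q)$ and $2C_\infty^2Q^2$. These must also enter the definition of $\widehat T$ unless that constant is normalized to one.
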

\begin{proof}~
For $(u, v)\in\mathfrak{B}_Q$ using Theorem \ref{thm_existence_linear} and Lemma \ref{fh_h1}, we deduce that  
\begin{multline}\label{estimate_in_BQ}
\parallel\mathcal{A}(u, v)\parallel_{\mathcal{S}_w(\widehat{T})\times\mathcal{S}_z(\widehat{T})}^2 \leq \mathfrak{C}e^{\theta \widehat{T}} \big( \parallel u_0\parallel^2_{H^2(\Omega_1)} + \parallel v_0\parallel^2_{H^1(\Omega_1)}  + \parallel g\parallel^2_{H^1(0, \widehat{T};L^2(\Gamma))\medcap L^{\infty}(0, \widehat{T};H^{\frac{1}{2}}(\Gamma))}  \\
+ \widehat{T} \psi \left( \parallel u\parallel_{L^{\infty}(0, \widehat{T}; L^{\infty}(\Omega_1))} \right)^2 \parallel u\parallel_{W^{1, \infty}(0, \widehat{T}; L^{2}(\Omega_1))}^2  + 2\widehat{T}\parallel u\parallel_{L^{\infty}(0,\widehat{T}; L^{\infty}(\Omega_1))}^2  \parallel v\parallel_{W^{1, \infty}(0,\widehat{T}; L^{2}(\Omega_1))}^2 \\+ C_{\texttt{emd}}\widehat{T}\parallel v\parallel_{L^{\infty}(0,\widehat{T}; H^{1}(\Omega_1))}^2  \parallel  u\parallel_{W^{1,\infty}(0,\widehat{T}; H^1(\Omega_1))}^2 \big).
\end{multline}
Using \eqref{bound_Q} and observing $\psi$ is an increasing function we obtain the following from \eqref{estimate_in_BQ}
\begin{equation}\label{estimate_in_BQ2}
\parallel\mathcal{A}(u, v)\parallel_{\mathcal{S}_w(\widehat{T})\times\mathcal{S}_z(\widehat{T})}^2 \leq e^{\theta \widehat{T}} \left(    \frac{Q^2}{4} +  \mathfrak{C}\widehat{T}Q^2\left(  \psi(Q)^2 +2Q^2 + C_{\texttt{emd}}Q^2 \right)\right).
\end{equation}
Using \eqref{def_time} in \eqref{estimate_in_BQ2} we have 
$\parallel\mathcal{A}(u, v)\parallel_{\mathcal{S}_w(\widehat{T})\times\mathcal{S}_z(\widehat{T})}^2 \leq Q^2$, hence the Lemma.
\end{proof}

\begin{theorem}\label{contraction_thm}
The set $\mathfrak{B}_Q$ is a closed subspace of $L^{\infty}(0,\widehat{T}; L^2(\Omega_1))\times L^{\infty}(0,\widehat{T}; L^2(\Omega_1))$ and the map $\mathcal{A}$ is contraction in $\mathfrak{B}_Q$.
\end{theorem}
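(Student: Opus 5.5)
Closedness comes first. Let $(w_n,z_n)\in\mathfrak{B}_Q$ converge to $(w,z)$ in $L^{\infty}(0,\widehat T;L^2(\Omega_1))^2$. The sequence is bounded by $Q$ in $\mathcal{S}_w(\widehat T)\times\mathcal{S}_z(\widehat T)$. Each factor of these spaces ($W^{1,\infty}(0,\widehat T;L^2)$, $L^\infty(0,\widehat T;H^2)$, $H^1(0,\widehat T;H^1)$, $L^\infty(0,\widehat T;H^1)$) is a dual of a separable space or a Hilbert space. So by Banach--Alaoglu we can extract a subsequence converging weak-$*$ in $\mathcal{S}_w\times\mathcal{S}_z$. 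That weak-$*$ limit must coincide with $(w,z)$, since both convergences imply convergence in distributions. Weak-$*$ lower semicontinuity of the norm then gives $\|(w,z)\|_{\mathcal{S}_w\times\mathcal{S}_z}\le Q$, so $(w,z)\in\mathfrak{B}_Q$. Strictly speaking $\mathfrak{B}_Q$ is a closed convex subset rather than a subspace. That is all the Banach fixed point theorem needs, because a closed subset of a Banach space is complete.

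For the contraction, take $(u_1,v_1),(u_2,v_2)\in\mathfrak{B}_Q$ with images $(w_i,z_i)=\mathcal{A}(u_i,v_i)$, and set $W=w_1-w_2$, $Z=z_1-z_2$, $U=u_1-u_2$, $V=v_1-v_2$. Then $(W,Z)$ solves the linear system \eqref{linear_RM_subproblem} with zero initial data and zero Robin data $g$. Its source is $\mathtt{f}_1(u_1)-\mathtt{f}_1(u_2)+\mathtt{f}_2(u_1,v_1)-\mathtt{f}_2(u_2,v_2)$. I would redo the first energy estimate \eqref{apriori1}--\eqref{apriori3}, but treat the source term more carefully so the factor $\sqrt{\widehat T}$ appears. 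Test with $W$ and $Z$ and integrate over $(0,t)$. Lemma \ref{lemma_ex} together with the embedding $H^2\subset L^\infty$ gives $\|\mathtt{f}_1(u_1)-\mathtt{f}_1(u_2)\|\le\psi(Q)\|U\|$. For $\mathtt{f}_2$ I would write $u_1v_1-u_2v_2=Uv_1+u_2V$. Then $\|u_2V\|\le Q\|V\|$, and $\int Uv_1W$ is bounded by $\|v_1\|_{L^4}\|U\|\,\|W\|_{L^4}\le C_{\texttt{emd}}Q\|U\|\,\|W\|_{H^1}$. The $\nabla W$ part of this is absorbed into $\gamma\|\nabla W\|^2$ by Young's inequality, which produces the constant $c_2^2Q^2C_{\texttt{emd}}^2/\gamma$ appearing in $\theta_2$. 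The remaining terms, $\langle bW,Z\rangle$ and the $\|W\|^2$, $\|Z\|^2$ products, give the exponent $\Theta=1+b+Q$.

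Grönwall's inequality then yields
\[
\|W\|^2_{L^\infty(0,\widehat T;L^2)}+\|Z\|^2_{L^\infty(0,\widehat T;L^2)}\le e^{\Theta\widehat T}\,\widehat T\,\theta_2\left(\|U\|^2_{L^\infty(0,\widehat T;L^2)}+\|V\|^2_{L^\infty(0,\widehat T;L^2)}\right).
\]
Taking square roots, and using $\sqrt{a^2+b^2}\le a+b$ if the product norm is the sum, the Lipschitz constant is at most $e^{\Theta\widehat T/2}\sqrt{\widehat T\theta_2}$. The third condition in \eqref{def_time} makes this at most $1/2$, so $\mathcal{A}$ is a contraction.

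The main obstacle is the cross term $Uv_1$. There $v_1$ is only known in $L^\infty(0,\widehat T;H^1)$, not in $L^\infty$ in space, while the contraction is measured only in $L^2$. Controlling it forces us to spend the gradient of $W$ through Ladyzhenskaya/Sobolev embedding in $d\le2$. We must also check that the constants fit exactly inside $\theta_2$ and $\Theta$, up to a possible harmless factor.
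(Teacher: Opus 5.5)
Your proposal is correct and follows the paper's proof. Closedness comes from weak(-$*$) compactness of the $\mathcal{S}_w(\widehat T)\times\mathcal{S}_z(\widehat T)$ ball plus identification of limits; your Banach--Alaoglu and lower-semicontinuity version is more careful than the paper's, and you rightly note $\mathfrak{B}_Q$ is only a closed subset. The contraction comes from the $L^2$ energy estimate for the difference system with $u_1v_1-u_2v_2=Uv_1+u_2V$, the $L^4$ cross term absorbed into $\gamma\|\nabla W\|^2$, Gr\"onwall, and the third condition in \eqref{def_time}.

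One bookkeeping slip: Gr\"onwall controls $\sup_t\bigl(\|W(t)\|^2+\|Z(t)\|^2\bigr)$, not the sum of the two $L^\infty$ norms. With the sum norm you also need $a+b\le\sqrt{2}\sqrt{a^2+b^2}$ on the left, so your constant can reach $2e^{\Theta\widehat T/2}\sqrt{\widehat T\theta_2}\le 1$, which is not strict. Work instead in the equivalent norm $\sup_t\bigl(\|U(t)\|^2+\|V(t)\|^2\bigr)^{1/2}$, where the factor $e^{\Theta\widehat T/2}\sqrt{\widehat T\theta_2}\le\tfrac12$ comes out directly.
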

\begin{proof}~To see the closedness of $\mathfrak{B}_Q$, consider the sequence $(w_n, z_n)_{n\geq 1}\in\mathfrak{B}_Q$ converging to $(w,z)\in L^{\infty}(0,\widehat{T}; L^2(\Omega_1))\times L^{\infty}(0,\widehat{T}; L^2(\Omega_1))$. But $\mathfrak{B}_Q$ is weakly compact in $\mathcal{S}_w(\widehat{T})\times\mathcal{S}_z(\widehat{T})$, there exists a weakly converging subsequence $(w_{n_k}, z_{n_k})_{k\geq 1}$ converging to $(\hat{w}, \hat{z})\in\mathfrak{B}_Q$ in $\mathcal{S}_w(\widehat{T})\times\mathcal{S}_z(\widehat{T})$. So by the uniqueness of the weak limit, we have $(w,z)\in\mathfrak{B}_Q$.

To show the map $\mathcal{A}$ is contraction, let $u_i, v_i \in \mathfrak{B}_Q$, we set $\mathcal{A}(u_i, v_i)=(w_i, z_i)$ for $i=1, 2$. Observe that the differences $\mathtt{w}:=w_1 - w_2, \mathtt{z}:=z_1 - z_2$ satisfies the following system
\begin{equation}\label{contraction_map}
\begin{cases}
\frac{\partial \mathtt{w}}{\partial t}  = \gamma \Delta \mathtt{w} + \mathtt{f}_1(u_1) -\mathtt{f}_1(u_2) + \mathtt{f}_2(u_1, v_1)-\mathtt{f}_2(u_2, v_2), & (x,t)\in\Omega_1\times(0,T],\\
\frac{\partial \mathtt{z}}{\partial t} = b\mathtt{w} - bc_3\mathtt{z} , & (x,t)\in\Omega_1\times(0,T],\\
\frac{\partial \mathtt{w}}{\partial n}=0, & (x,t)\in\left(\partial\Omega_1 - \Gamma\right)\times(0,T],\\
\gamma\frac{\partial \mathtt{w}}{\partial n} + p\mathtt{w}
 = 0,  & (x,t)\in\Gamma_T\\
\mathtt{w}(x,0)=0, \quad 
\mathtt{z}(x,0)=0, & x\in \Omega_1.
\end{cases}
\end{equation}
Multiplying \eqref{contraction_map}$_1$ by $\mathtt{w}$ and integrate over $\Omega_1$ we have 
\begin{equation}\label{contr_apriori1}
\begin{aligned}
\frac{1}{2}\frac{\partial}{\partial t}\parallel \mathtt{w}\parallel^2 + \gamma \parallel\nabla \mathtt{w} \parallel^2 + p \parallel \mathtt{w} \parallel^2_{\Gamma}  =  
\int_{\Omega_1} \left( \mathtt{f}_1(u_1) -\mathtt{f}_1(u_2)\right) \mathtt{w}  + \int_{\Omega_1} \left( \mathtt{f}_2(u_1, v_1) -\mathtt{f}_2(u_2, v_2) \right) \mathtt{w}.
\end{aligned}
\end{equation}
Note that $\mathtt{f}_2(u_1, v_1) -\mathtt{f}_2(u_2, v_2) =-c_2( u_1v_1 - u_2v_2)=-c_2(\mathtt{u}v_1 + \mathtt{v}u_2)$, where we denote $\mathtt{u}=u_1-u_2$ and $\mathtt{v}=v_1-v_2$. Using Sobolev embedding, Cauchy-Schwarz, and Young's inequality on the right-hand side of \eqref{contr_apriori1}, we have 
\begin{multline}\label{contr_apriori2}
\frac{1}{2}\frac{\partial}{\partial t}\parallel \mathtt{w}\parallel^2 + \gamma \parallel\nabla \mathtt{w} \parallel^2 + p \parallel \mathtt{w} \parallel^2_{\Gamma}  \leq   
\frac{1}{2}\left( \parallel \mathtt{f}_1(u_1) -\mathtt{f}_1(u_2)\parallel^2 + \parallel \mathtt{w}\parallel^2\right) +\frac{\gamma}{2} \parallel\nabla \mathtt{w} \parallel^2\\ + \frac{c_2^2 C_{\texttt{emd}}^2}{2\gamma}\parallel v_1\parallel_{L^{4}}^2  \parallel \mathtt{u}\parallel^2 +\parallel u_2\parallel_{L^{\infty}}\frac{1}{2} \left( \parallel \mathtt{v}\parallel^2 +\parallel \mathtt{w}\parallel^2 \right).
\end{multline}
Multiplying \eqref{contraction_map}$_2$ by $\mathtt{z}$, using Cauchy-Schwarz and Young's inequality and integrate over $\Omega_1$, we obtain
\begin{equation}\label{contr_apriori3}
\frac{1}{2}\frac{\partial}{\partial t}\parallel \mathtt{z}\parallel^2   \leq \frac{b}{2} \left( \parallel \mathtt{z}\parallel^2 + \parallel \mathtt{w}\parallel^2 \right).
\end{equation} 
Adding \eqref{contr_apriori1} and \eqref{contr_apriori3} and integrate over the temporal domain $[0, t]$ where $t\leq \widehat{T}$, and using Gr\"{o}nwall's inequality we obtain 
\begin{multline}\label{contr_apriori4}
\parallel \mathtt{w}\parallel^2 + \parallel \mathtt{z}\parallel^2 \leq e^{\Theta t} \big[\int_0^t \parallel \mathtt{f}_1(u_1) -\mathtt{f}_1(u_2)\parallel^2  + \frac{c_2^2 C_{\texttt{emd}}^2}{2\gamma}\parallel v_1\parallel_{L^{\infty}(0,t;L^{4}(\Omega_1))}^2 \int_0^t \parallel \mathtt{u}\parallel^2 \\ + \parallel u_2\parallel_{L^{\infty}(0,t;L^{\infty}(\Omega_1))} \int_0^t \parallel \mathtt{v}\parallel^2\big].
\end{multline}
Using Lemma \ref{lemma_ex} in \eqref{contr_apriori4}, we have 
\begin{equation}\label{contr_apriori5}
\parallel \mathtt{w}\parallel^2_{L^{\infty}(0,\widehat{T}; L^2(\Omega_1))} + \parallel \mathtt{z}\parallel^2_{L^{\infty}(0,\widehat{T}; L^2(\Omega_1))} \leq 
e^{\Theta \widehat{T}} \widehat{T} \theta_2 \left( \parallel \mathtt{u}\parallel^2_{L^{\infty}(0,\widehat{T}; L^2(\Omega_1))} + \parallel \mathtt{v}\parallel^2_{L^{\infty}(0,\widehat{T}; L^2(\Omega_1))} \right).
\end{equation}
Thus by using \eqref{def_time}, we have 
\begin{equation}
\parallel (w, z)\parallel_{L^{\infty}(0,\widehat{T};L^{2}(\Omega_1))} \leq \frac{1}{2} \parallel (u, v)\parallel_{L^{\infty}(0,\widehat{T};L^{2}(\Omega_1))},
\end{equation}
and hence the map $\mathcal{A}$ is a contraction. Thus by the Picard's theorem, the map $\mathcal{A}$ has a unique fixed point in $\mathfrak{B}_Q$, which proves
the existence and uniqueness of a solution $(w, z)\in\mathfrak{B}_Q$ to the nonlinear problem.
\end{proof}

To obtain the solution over the entire time interval, we first examine the existence of the solution in $[\widehat{T}, \widehat{\widehat{T}}]$. Given the regularity of the solution $(w, z) \in \mathfrak{B}_Q$, it is clear that $w(\widehat{T})$ and $z(\widehat{T})$ are well-defined, with $w(\widehat{T}) \in H^2(\Omega_1)$ and $z(\widehat{T}) \in H^1(\Omega_1)$. Consequently, we can apply a similar analysis in the interval $[\widehat{T}, \widehat{\widehat{T}}]$ with $Q$ in \eqref{bound_Q} being replaced by the following $Q_n$
\begin{equation*}
Q_n^2 \geq 4 \mathfrak{C}\left( \parallel u(\widehat{T})\parallel^2_{H^2(\Omega_1)} + \parallel v(\widehat{T})\parallel^2_{H^1(\Omega_1)}  + \parallel g\parallel^2_{H^1(0, T;L^2(\Gamma))\medcap L^{\infty}(0, T;H^{\frac{1}{2}}(\Gamma))} \right),
\end{equation*}
and $\widehat{\widehat{T}}(Q_n)$ is defined in the same way as in \eqref{def_time}, with the replacement of $Q$ by $Q_n$ in the expression given in \eqref{def_time}.
By repeating the argument in finitely many steps, we can extend the solution over the entire time interval $[0, T]$.
From the above theorem, we can see that the sequences $u_i^{k}, v_i^{k}$ generated through OSWR method are well-posed and satisfy $\left( u_i^{k}, v_i^{k} \right) \in \mathcal{S}_w({T})\times\mathcal{S}_z({T})$ as long as the Robin trace belongs to $H^1(0, {T};L^2(\Gamma))\medcap L^{\infty}(0, {T};H^{\frac{1}{2}}(\Gamma))$. Rewriting the transmission condition in \eqref{oswr1}, it is easy to observe that Robin trace belongs to the above-mentioned space.  Next, we show the convergence of the OSWR method.

\begin{remark}
The assumption \eqref{monotone_fg} can alternatively be employed to establish Theorem \ref{contraction_thm}, albeit with different constants appearing in \eqref{def_time}.
\end{remark}

\subsection{Convergence of the OSWR}

Let us denote the subdomain errors as $e_i^{k}:=u-u_i^{k}$ and $\mathfrak{e}_i^{k}:=v-v_i^{k}$ for $i=1, 2$. We denote $e^{k}$ and $\mathfrak{e}^{k}$ as functions in $L^2(\Omega\times (0, T))$ such that $e^{k}_{|\Omega_i}=e_i^{k}$ and $\mathfrak{e}^{k}_{|\Omega_i}=\mathfrak{e}_i^{k}$ for $i=1, 2$. Define the interface error as $\eta_i^{k}:=g_{i}-g_i^{k}$, where $g_i=\mathtt{B}_iu_{3-i}$ on $\Gamma_T$ for $i=1,2$, and $\boldsymbol{\eta}^{k}:=\left( \eta_1^{k}, \eta_2^{k} \right)^{\top}$.

\begin{theorem}\label{OSWR_thm_conv}
Let $u_0\in H^2(\Omega), v_0\in L^2(\Omega)$, $g_i^{0}\in H^1(0, T;L^2(\Gamma))\medcap L^{\infty}(0, T;H^{\frac{1}{2}}(\Gamma))$, and $\eta^K,  \eta^0 \in L^2(0, T; L^2(\Gamma))$. Suppose ${\mathtt{B}_iu_0}_{|\Gamma}={g_i^{0}}_{|\Gamma}$. Then the OSWR iterates $u_i^{k}, v_i^{k}$ converges to the exact solution $u, v$ in their respective subdomains in $L^{\infty}(0,T;L^2(\Omega))\bigcap L^2(0,T;H^1(\Omega))\times L^{\infty}(0,T;L^2(\Omega))$, and satisfies the following estimate for $0<t\leq T$
\begin{multline}\label{OSWR_est}
 \sum_{k=1}^K  \left( \frac{1}{2}\parallel {e}^{k}(\cdot,t)\parallel^2 + \frac{1}{2}\parallel \mathfrak{e}^{k}(\cdot, t)\parallel^2 +\gamma\sum_{i=1}^2 \parallel {e}_i^{k}\parallel_{L^2(0,t;H^1(\Omega_i))}^2\right) +\frac{\gamma}{4p} \parallel\boldsymbol{\eta}^{K}(t)\parallel^2 \leq \\ \frac{\gamma}{4p} \parallel\boldsymbol{\eta}^{0}(t)\parallel^2 + C_5\sum_{k=1}^K \int_0^t  \left( \frac{1}{2}\parallel {e}^{k}(\cdot,s)\parallel^2 + \frac{1}{2}\parallel \mathfrak{e}^{k}(\cdot, s)\parallel^2 \right), 
\end{multline}
where $C_5$ is some positive constant.
\end{theorem}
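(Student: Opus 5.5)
The plan is the standard energy argument for nonoverlapping Robin-type OSWR, as in the linear analysis of \cite{bui2022coupling}, extended to handle the nonlinear coupling through the structural assumptions \eqref{osL_fs}--\eqref{lip_fs}, or the monotonicity \eqref{monotone_fg}. First, subtracting \eqref{oswr1} from \eqref{RM} restricted to $\Omega_i$ gives the error equations for $(e_i^k,\mathfrak{e}_i^k)$. Each has zero initial data, nonlinear source $-(\mathfrak{f}(u,v)-\mathfrak{f}(u_i^k,v_i^k))$ in the $e$-equation and $-(\mathfrak{g}(u,v)-\mathfrak{g}(u_i^k,v_i^k))$ in the $\mathfrak{e}$-equation. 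The transmission condition becomes $\mathtt{B}_i e_i^k=\eta_i^k$ on $\Gamma_T$, with $\eta_i^k=\mathtt{B}_i e_{3-i}^{k-1}$. The compatibility hypothesis ${\mathtt{B}_iu_0}_{|\Gamma}={g_i^0}_{|\Gamma}$ and the regularity results of the previous subsection (Theorem \ref{contraction_thm} and its continuation) guarantee that all iterates lie in $\mathcal{S}_w(T)\times\mathcal{S}_z(T)$. This justifies the test functions and traces used below.

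Second, I test the $e$-equation with $e_i^k$ and the $\mathfrak{e}$-equation with $\mathfrak{e}_i^k$, integrate over $\Omega_i\times(0,t)$, and sum over $i$. The boundary term is rewritten by the usual Robin identity: with $\gamma\partial_{n_i}e_i^k = \frac12\big(\mathtt{B}_ie_i^k-\mathtt{B}_{3-i}e_i^k\big)$ and $e_i^k=\frac1{2p}\big(\mathtt{B}_ie_i^k+\mathtt{B}_{3-i}e_i^k\big)$ on $\Gamma$, one gets
\[
\int_0^t\!\!\int_\Gamma \gamma\,\partial_{n_i}e_i^k\, e_i^k=\frac{1}{4p}\int_0^t\!\!\int_\Gamma\Big(|\mathtt{B}_ie_i^k|^2-|\mathtt{B}_{3-i}e_i^k|^2\Big).
\]
Since $\mathtt{B}_{3-i}e_i^k=\eta_{3-i}^{k+1}$ and $\mathtt{B}_ie_i^k=\eta_i^k$, this yields the one-step estimate
\[
\tfrac12\|e^k(t)\|^2+\tfrac12\|\mathfrak{e}^k(t)\|^2+\gamma\sum_i\|\nabla e_i^k\|^2_{L^2(0,t;L^2)}+\tfrac{\gamma}{4p}\|\boldsymbol{\eta}^{k+1}(t)\|^2\le \tfrac{\gamma}{4p}\|\boldsymbol{\eta}^{k}(t)\|^2+(\text{reaction terms}).
\]
Here $\|\boldsymbol{\eta}^k(t)\|$ denotes the $L^2(0,t;L^2(\Gamma))$ norm, and the factor $\gamma$ is absorbed according to how $\mathtt{B}_i$ is scaled. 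To turn the $\nabla$-norm into the full $H^1$ norm, I add $\gamma\int_0^t\|e_i^k\|^2$ to both sides; this only enlarges the constant on the right.

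Third, the reaction terms. I would use \eqref{monotone_fg}, or alternatively the splitting assumptions \eqref{osL_fs}--\eqref{lip_fs} together with the $L^\infty$ bound on $u,u_i^k$ coming from the $\mathfrak{B}_Q$ / $H^2$ regularity and Lemma \ref{lemma_ex}. With these I bound
\[
-\int_0^t\!\langle \mathfrak{f}(u,v)-\mathfrak{f}(u_i^k,v_i^k),e_i^k\rangle-\int_0^t\!\langle\mathfrak{g}(u,v)-\mathfrak{g}(u_i^k,v_i^k),\mathfrak{e}_i^k\rangle \le C_5\int_0^t\Big(\tfrac12\|e^k\|^2+\tfrac12\|\mathfrak{e}^k\|^2\Big).
\]
The cross term $c_2 u\,\mathfrak{e}$ and the linear $\mathfrak{g}$ cross terms are handled by Young's inequality. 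I expect this step to be the main obstacle. Monotonicity with $\lambda_{\mathfrak{fg}}>0$ would even give a favorable sign, but the cubic nonlinearity genuinely needs the uniform $L^\infty$ bound on the iterates, uniform in $k$, to get a $k$-independent $C_5$. I would first try to obtain it from the a priori bound in $\mathcal{S}_w(T)$ together with the Sobolev embedding $H^2\subset L^\infty$ ($d\le2$).

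Finally, I sum the one-step estimate over $k=1,\dots,K$. The interface terms telescope, leaving $\frac{\gamma}{4p}\|\boldsymbol{\eta}^{K+1}(t)\|^2$ on the left and $\frac{\gamma}{4p}\|\boldsymbol{\eta}^{1}(t)\|^2$ on the right; after shifting the index convention as in the statement, this gives \eqref{OSWR_est}. Convergence then follows by applying Grönwall's inequality to $E_K(t)=\sum_{k\le K}(\tfrac12\|e^k(t)\|^2+\tfrac12\|\mathfrak{e}^k(t)\|^2)$. This gives $E_K(t)\le \frac{\gamma}{4p}e^{C_5t}\|\boldsymbol{\eta}^0\|^2$ uniformly in $K$, so the series converges and each term tends to zero in $L^\infty(0,T;L^2)\cap L^2(0,T;H^1)\times L^\infty(0,T;L^2)$.
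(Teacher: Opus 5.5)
Your steps 1, 2 and 4 follow the paper's proof: test with $e_i^k$ and $\mathfrak{e}_i^k$, apply the Robin identity to $\gamma\,\partial_{n_i}e_i^k\,e_i^k$, insert the transmission condition, sum over $i$ and $k$ so the interface terms telescope, then apply Gr\"onwall. Your index bookkeeping ($\boldsymbol{\eta}^{K+1}$ versus $\boldsymbol{\eta}^{1}$) and the extra $\gamma\int_0^t\|e_i^k\|^2$ used to get the full $H^1$ norm are, if anything, tidier than the paper's.

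You depart from the paper in step 3, and there you have put the obstacle in the wrong place. The cubic term needs no $L^\infty$ bound. By the mean value theorem and $f'\ge-(a+1)^2/3$ from \eqref{inequality1},
\[
-c_1\int_{\Omega_i}\big(f(u)-f(u_i^k)\big)e_i^k=-c_1\int_{\Omega_i}f'(\xi_i)\,(e_i^k)^2\le c_1\tfrac{(a+1)^2}{3}\,\|e_i^k\|^2 ,
\]
which is exactly what the paper does. The term that needs control of the iterates is the bilinear coupling. The paper splits $uv-u_i^kv_i^k=v\,e_i^k+u_i^k\,\mathfrak{e}_i^k$ and bounds $c_2\int u_i^k\,\mathfrak{e}_i^k e_i^k$ by $\|u_i^k\|_{L^\infty(0,T;L^\infty)}$ via $H^2\subset L^\infty$, tacitly treating this as a $k$-independent constant. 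The $\mathfrak{e}$-equation is linear and needs only Young's inequality.

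Your proposed source of uniformity, the $\mathcal{S}_w(T)$ a priori estimate, does not supply it by itself. That estimate is driven by the Robin data $g_i^k=\mathtt{B}_iu_{3-i}^{k-1}$, i.e.\ by the previous iterate, so iterating it only gives bounds that can grow with $k$. On this branch you and the paper rely on the same unproved ingredient.

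Your first option, however, closes the step completely. Under \eqref{monotone_fg} the reaction contributions are at most $-\lambda_{\mathfrak{f}\mathfrak{g}}\int_0^t(\|e^k\|^2+\|\mathfrak{e}^k\|^2)\le 0$. Hence \eqref{OSWR_est} holds with any $C_5\ge 0$, and no bound on the iterates is needed. Likewise, \eqref{osL_fs}--\eqref{lip_fs} alone suffice, via the splitting $\mathfrak{f}(u,v)-\mathfrak{f}(u_i^k,v_i^k)=[f_s(u,u,v)-f_s(u_i^k,u,v)]+[f_s(u_i^k,u,v)-f_s(u_i^k,u_i^k,v_i^k)]$. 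This gives the bound $\lambda_{f_s}\|e_i^k\|^2+L_f(\|e_i^k\|+\|\mathfrak{e}_i^k\|)\|e_i^k\|$ with no $L^\infty$ information. This is the alternative the paper mentions in the remark after its proof, and it is the cleanest argument under the paper's stated hypotheses.

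What it costs is realism. \eqref{monotone_fg} with $\lambda_{\mathfrak{f}\mathfrak{g}}>0$ is violated by the actual RM nonlinearity: already for $w_1=w_2=0$ it would require $f'>0$, but $f'((1+a)/3)=a-(1+a)^2/3<0$. The paper's main route uses the true structure of $f$ and pays only with the $L^\infty$ bound on $u_i^k$ in the coupling term. So commit to one branch. Either use the structural assumption, where there is no obstacle, or use the explicit RM structure, where the obstacle sits in $c_2uv$, not in the cubic term.
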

\begin{proof}~
The subdomain error equations for $i=1, 2$ are given by
\begin{equation}\label{oswr1_err}
\begin{aligned}
&\left\{
\begin{aligned}
\frac{\partial e_i^{k}}{\partial t}  = \gamma \Delta e_i^{k} - c_1\left[ f(u)-f(u_i^{k})\right] - c_2\left( uv - u_i^{k}v_i^{k}\right), \,\ \text{in} \,\ \Omega_{i,T},\\
\frac{\partial \mathfrak{e}_i^{k}}{\partial t}  = be_i^{k} - bc_3\mathfrak{e}_i^{k} ,\,\ \text{in} \,\ \Omega_{i,T},  \\
\mathtt{B}_i e_i^{k}
 = 
\mathtt{B}_i e_{3-i}^{k-1},
\,\ \mbox{on}\,\ \Gamma_{T}.\\
\end{aligned}\right.
\end{aligned}
\end{equation}
Multiplying the equation \eqref{oswr1_err}$_1$ by $e_i^{k}$ and integrate by parts over the spatial domain we obtain 
\begin{equation}\label{err_est1}
\begin{aligned}
\frac{1}{2}\frac{d}{d t} \parallel e_i^{k}\parallel^2 + \gamma\parallel\nabla e_i^{k} \parallel^2 -\gamma\int_{\Gamma} \frac{\partial e_i^{k}}{\partial n_i} e_i^{k} & =- c_1 \int_{\Omega} f'(\xi_i) \left( e_i^{k}\right)^2 -c_2\int_{\Omega} \left( uv - u_i^{k}v_i^{k}\right)e_i^{k},\\
& \leq c_1\frac{(a+1)^2}{3} \parallel e_i^{k}\parallel^2 -c_2\int_{\Omega} \left( uv - u_i^{k}v_i^{k}\right)e_i^{k},
\end{aligned}
\end{equation}
where on the second inequality, we make use of the inequality \eqref{inequality1} for $u(x,t)< \xi_i(x,t)<u_i^{k}(x,t)$. Next we rewrite the term $\left( uv - u_i^{k}v_i^{k}\right)$ as
\begin{equation}\label{iden2}
\begin{aligned}
uv - u_i^{k}v_i^{k} & = uv- u_i^{k}v +u_i^{k}v -u_i^{k}v_i^{k} 
=  v {e}_i^{k} + u_i^{k} \mathfrak{e}_i^{k} .
\end{aligned}
\end{equation}
Using the identity \eqref{iden2}, the Sobolev embedding result for $u, u_i^{k}\in L^{\infty}(0, T; H^2(\Omega))$, the Cauchy-Schwarz and the Young's inequality we obtain 
\begin{equation}\label{err_int}
\begin{aligned}
-c_2\int_{\Omega} \left( uv - u_i^{k}v_i^{k}\right)e_i^{k} &\leq c_2\left( \frac{1}{2} \parallel u_i^{k}\parallel_{L^{\infty}(0,T;L^{\infty}(\Omega))} \left( \parallel \mathfrak{e}_i^{k}\parallel^2 + \parallel e_i^{k}\parallel^2 \right) + c_g \parallel e_i^{k}\parallel^2 \right),\\
&\leq \frac{C_1}{2} \parallel \mathfrak{e}_i^{k}\parallel^2 + \left( \frac{C_1}{2} + C_2\right) \parallel e_i^{k}\parallel^2.
\end{aligned}
\end{equation}
Using \eqref{err_int} in \eqref{err_est1} and integrate with respect to time $t$ we have
\begin{equation}\label{err_est2}
\begin{aligned}
\frac{1}{2}\parallel e_i^{k}(t)\parallel^2 + \gamma\int_0^t\parallel\nabla e_i^{k} \parallel^2 -\gamma\int_0^t\int_{\Gamma} \frac{\partial e_i^{k}}{\partial n_i} e_i^{k} 
& \leq C_3 \int_0^t\parallel e_i^{k}\parallel^2 + \frac{C_1}{2} \int_0^t\parallel \mathfrak{e}_i^{k}\parallel^2,
\end{aligned}
\end{equation} 
where $C_3=\left( c_1\frac{(a+1)^2}{3}+\frac{C_1}{2} + C_2 \right)$. For obtaining the Robin parameter $p$, employing the identity $AB=\frac{1}{4p}\left[ (A+pB)^2 - (A-pB)^2\right]$ for the boundary integral term in \eqref{err_est2} for first subdomain, we have the following inequality. 
\begin{equation}\label{err_est3}
\begin{aligned}
\frac{1}{2}\parallel e_1^{k}(t)\parallel^2 + \gamma\int_0^t\parallel\nabla e_1^{k} \parallel^2 +\frac{\gamma}{4p}\int_0^t\parallel\mathtt{B}_2 e_1^{k}\parallel_{\Gamma}^2
& \leq C_3 \int_0^t\parallel e_1^{k}\parallel^2 + \frac{C_1}{2} \int_0^t\parallel \mathfrak{e}_1^{k}\parallel^2
 + \frac{\gamma}{4p}\int_0^t\parallel\mathtt{B}_1 e_1^{k}\parallel_{\Gamma}^2,\\
& \leq C_3 \int_0^t\parallel e_1^{k}\parallel^2 + \frac{C_1}{2} \int_0^t\parallel \mathfrak{e}_1^{k}\parallel^2
 + \frac{\gamma}{4p}\int_0^t\parallel\mathtt{B}_1 e_2^{k-1}\parallel_{\Gamma}^2,
\end{aligned}
\end{equation} 
where on the second inequality we use the transmission condition \eqref{oswr1_err}$_3$ for $i=1$. Similarly, for the second subdomain, we have 
\begin{equation}\label{err_est4}
\begin{aligned}
\frac{1}{2}\parallel e_2^{k}(t)\parallel^2 + \gamma\int_0^t\parallel\nabla e_2^{k} \parallel^2 +\frac{\gamma}{4p}\int_0^t\parallel\mathtt{B}_1 e_2^{k}\parallel_{\Gamma}^2
& \leq C_3 \int_0^t\parallel e_2^{k}\parallel^2 + \frac{C_1}{2} \int_0^t\parallel \mathfrak{e}_2^{k}\parallel^2
 + \frac{\gamma}{4p}\int_0^t\parallel\mathtt{B}_2 e_1^{k-1}\parallel_{\Gamma}^2.
\end{aligned}
\end{equation} 
Adding \eqref{err_est3} and \eqref{err_est4} we get
\begin{equation}\label{err_est5}
\begin{aligned}
\frac{1}{2}\sum_{i=1}^2\parallel e_i^{k}(t)\parallel^2 + \gamma \sum_{i=1}^2\int_0^t\parallel\nabla e_i^{k} \parallel^2 +\frac{\gamma}{4p} \sum_{i=1}^2\int_0^t\parallel\mathtt{B}_i e_{3-i}^{k}\parallel_{\Gamma}^2
 \leq C_3 \sum_{i=1}^2\int_0^t\parallel e_i^{k}\parallel^2 \\
 + \frac{C_1}{2} \sum_{i=1}^2\int_0^t\parallel \mathfrak{e}_i^{k}\parallel^2
+\frac{\gamma}{4p}\sum_{i=1}^2\int_0^t\parallel\mathtt{B}_i e_{3-i}^{k-1}\parallel_{\Gamma}^2.
\end{aligned}
\end{equation} 
Summing over $k$ in \eqref{err_est5} for $k=1,2,\cdots, K$ we have 
\begin{equation}\label{err_est6}
\begin{aligned}
\frac{1}{2}\sum_{k=1}^K\sum_{i=1}^2\parallel e_i^{k}(t)\parallel^2 + \gamma\sum_{k=1}^K\sum_{i=1}^2\int_0^t \parallel\nabla e_i^{k} \parallel^2 +\frac{\gamma}{4p} \sum_{i=1}^2\int_0^t\parallel\mathtt{B}_i e_{3-i}^{K}\parallel_{\Gamma}^2
 \leq C_3 \sum_{k=1}^K\sum_{i=1}^2\int_0^t\parallel e_i^{k}\parallel^2 \\
 + \frac{C_1}{2} \sum_{k=1}^K\sum_{i=1}^2\int_0^t\parallel \mathfrak{e}_i^{k}\parallel^2
+\frac{\gamma}{4p}\sum_{i=1}^2\int_0^t\parallel\mathtt{B}_i e_{3-i}^{0}\parallel_{\Gamma}^2.
\end{aligned}
\end{equation} 
Multiplying the equation \eqref{oswr1_err}$_2$ by $\mathfrak{e}_i^{k}$ and integrate over the spatial and time domain we obtain 
\begin{equation}\label{err_est7}
\begin{aligned}
\frac{1}{2}\parallel \mathfrak{e}_i^{k}(t)\parallel^2 
& \leq C_4 \int_0^t\parallel e_i^{k}\parallel^2 + C_4 \int_0^t\parallel \mathfrak{e}_i^{k}\parallel^2.
\end{aligned}
\end{equation} 
Adding the subdomain solution corresponding to the variable $\mathfrak{e}_i^{k}$ using \eqref{err_est7} and summing over $k$ for $k=1, 2, \cdots, K$ we have 
\begin{equation}\label{err_est8}
\begin{aligned}
\frac{1}{2}\sum_{k=1}^K\sum_{i=1}^2\parallel \mathfrak{e}_i^{k}(t)\parallel^2 
& \leq C_4 \sum_{k=1}^K\sum_{i=1}^2\int_0^t\parallel e_i^{k}\parallel^2 + C_4 \sum_{k=1}^K\sum_{i=1}^2\int_0^t\parallel \mathfrak{e}_i^{k}\parallel^2.
\end{aligned}
\end{equation} 
Let us define 
\begin{equation}
\mathtt{E}^{K}(t):= \frac{1}{2}\sum_{k=1}^K\sum_{i=1}^2\parallel {e}_i^{k}(t)\parallel^2 + \frac{1}{2}\sum_{k=1}^K\sum_{i=1}^2\parallel \mathfrak{e}_i^{k}(t)\parallel^2, \; \mathtt{F}^{K}(t):= \gamma\sum_{k=1}^K\sum_{i=1}^2\int_0^t \parallel\nabla e_i^{k} \parallel^2.
\end{equation}
Adding \eqref{err_est6} and \eqref{err_est8} we obtain 
\begin{equation}\label{err_est9}
\begin{aligned}
\mathtt{E}^{K}(t)+ \mathtt{F}^{K}(t) +\mathtt{H}^{K}(t)
 \leq \mathtt{H}^{0}(t)+ C_5\int_0^t\mathtt{E}^{K}(s),
\end{aligned}
\end{equation} 
where $\mathtt{H}^{j}(t):=\frac{\gamma}{4p} \parallel\boldsymbol{\eta}^{j}(t)\parallel^2$ for $j=0, K$. 
Using Gr\"{o}nwall's inequality in \eqref{err_est9} we have
\begin{equation}\label{err_est10}
\begin{aligned}
\mathtt{E}^{K}(t) \leq \underbrace{\mathtt{H}^{0}(t) + C_5\int_0^t \mathtt{H}^{0}(s) e^{C_5 (t-s)}}_{\mathtt{P}(t)} <\infty, \\
\mathtt{F}^{K}(t) \leq \mathtt{H}^{0}(t) +C_5\int_0^t \mathtt{P}(s)<\infty.
\end{aligned}
\end{equation}
As RHS of the general term $\mathtt{E}^{K}(t)$ and $\mathtt{F}^{K}(t)$ in \eqref{err_est10} are independent of the iteration number $k$, 
and are uniformly bounded in $L^{\infty}(0,T)$. Hence $u_i^{k}$ converges to $u_{|\Omega_i}$ in 
$L^{\infty}(0,T;L^2(\Omega_i))\bigcap L^2(0,T;H^1(\Omega_i))$ and $v_i^{k}$ converges to $v_{|\Omega_i}$ in 
$L^{\infty}(0,T;L^2(\Omega_i))$. From \eqref{err_est9} we have the estimate \eqref{OSWR_est}.
\end{proof}

\begin{remark}
The proof of Theorem \ref{OSWR_thm_conv} can also be obtained by utilizing the assumption \eqref{monotone_fg}, leading to the OSWR error estimate given in \eqref{OSWR_est}.
\end{remark}

As we consider incomplete OSWR iteration, the composite solution at the endpoint of each time slice only has the regularity in $L^2$ for the excitation variable. Thus, the overall regularity of the solution generated through the Parareal-OSWR process belongs to $\mathcal{H}$. In this regard, we define the initial error $e_0=\overline{u}_0-u_0, \mathfrak{e}_0= \overline{v}_0-v_0$, where $\overline{u}_0, \overline{v}_0$ are initial solution for incomplete-OSWR iteration and $u_0, v_0$ are exact initial solution.

\begin{theorem}\label{incomplete_OSWR_thm_conv}
Let $u_0\in \mathcal{H}, v_0\in L^2(\Omega)$ and $g_i^{0}\in H^1(0, T;L^2(\Gamma))\medcap L^{\infty}(0, T;H^{\frac{1}{2}}(\Gamma))$. Suppose ${\mathtt{B}_iu_0}_{|\Gamma}={g_i^{0}}_{|\Gamma}$. Then the OSWR iterates $u_i^{k}, v_i^{k}$ satisfies the following estimate for $0<t\leq T$
\begin{multline}\label{OSWR_est_inc}
 \sum_{k=1}^K  \left( \frac{1}{2}\parallel {e}^{k}(\cdot,t)\parallel^2 + \frac{1}{2}\parallel \mathfrak{e}^{k}(\cdot, t)\parallel^2 +\gamma\sum_{i=1}^2 \parallel {e}_i^{k}\parallel_{L^2(0,t;H^1(\Omega_i))}^2\right) +\frac{\gamma}{4p} \parallel\boldsymbol{\eta}^{K}(t)\parallel^2 \leq \\ \frac{K}{2} \left( \parallel e_0\parallel^2 + \parallel \mathfrak{e}_0\parallel^2 \right) +\frac{\gamma}{4p} \parallel\boldsymbol{\eta}^{0}(t)\parallel^2 + \boldsymbol{C}\sum_{k=1}^K \int_0^t \left( \frac{1}{2}\parallel {e}^{k}(\cdot,s)\parallel^2 + \frac{1}{2}\parallel \mathfrak{e}^{k}(\cdot, s)\parallel^2 \right), 
\end{multline}
where $\boldsymbol{C}$ is some positive constant.
\end{theorem}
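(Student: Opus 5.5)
The plan is to repeat the energy argument of Theorem \ref{OSWR_thm_conv} almost verbatim. The one change is that the initial errors no longer vanish. Here the subdomain iterates start from the perturbed data $\overline{u}_0,\overline{v}_0$, while the exact solution starts from $u_0,v_0$. The error equations \eqref{oswr1_err} therefore keep the same interior equations and the same transmission conditions $\mathtt{B}_i e_i^k=\mathtt{B}_i e_{3-i}^{k-1}$ on $\Gamma_T$, but now carry the initial conditions $e_i^k(\cdot,0)={e_0}_{|\Omega_i}$ and $\mathfrak{e}_i^k(\cdot,0)={\mathfrak{e}_0}_{|\Omega_i}$ for every $k$. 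Regularity of each iterate (needed for the $L^\infty$ bounds on $u_i^k$ and for the traces) comes from the subdomain well-posedness established via Theorem \ref{thm_existence_linear} and Theorem \ref{contraction_thm}, applied on each $\Omega_i$ with $u_0\in\mathcal{H}$, so that ${u_0}_{|\Omega_i}\in\mathcal{H}_i$, together with the compatibility ${\mathtt{B}_iu_0}_{|\Gamma}={g_i^0}_{|\Gamma}$.

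First I test \eqref{oswr1_err}$_1$ with $e_i^k$ and integrate by parts on $\Omega_i$. I bound the reaction term by \eqref{inequality1} and treat the coupling term through the splitting \eqref{iden2} exactly as in \eqref{err_int}. Integrating in time over $(0,t)$ now produces $\frac12\|e_i^k(t)\|^2-\frac12\|e_0\|^2_{L^2(\Omega_i)}$ on the left, instead of just $\frac12\|e_i^k(t)\|^2$. I handle the interface term with the identity $AB=\frac1{4p}[(A+pB)^2-(A-pB)^2]$ and the transmission condition, as in \eqref{err_est3} and \eqref{err_est4}. Adding over $i=1,2$ gives \eqref{err_est5} with the extra term $\frac12\|e_0\|^2$ on the right. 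Likewise, testing \eqref{oswr1_err}$_2$ with $\mathfrak{e}_i^k$ gives \eqref{err_est7} with an extra term $\frac12\|\mathfrak{e}_0\|^2_{L^2(\Omega_i)}$, and summing over $i$ yields $\frac12\|\mathfrak{e}_0\|^2$.

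Next I sum over $k=1,\dots,K$. The Robin-trace terms telescope exactly as in \eqref{err_est6}, leaving $\frac{\gamma}{4p}\|\boldsymbol{\eta}^K(t)\|^2$ on the left and $\frac{\gamma}{4p}\|\boldsymbol{\eta}^0(t)\|^2$ on the right. Here $\|\boldsymbol{\eta}^j(t)\|^2$ is understood as $\sum_i\int_0^t\|\mathtt{B}_ie^j_{3-i}\|_\Gamma^2$, as in the earlier proof. The initial-error contributions do not telescope: each iteration contributes the same quantity, so together they give $\frac K2(\|e_0\|^2+\|\mathfrak{e}_0\|^2)$. Adding the $u$- and $v$-estimates and collecting constants, with $\boldsymbol{C}=2\max\{C_3+C_4,\,C_1/2+C_4\}$ or similar, gives \eqref{OSWR_est_inc} directly. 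No Grönwall step is required, since the statement keeps the integral term on the right.

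The main obstacle is justifying the bound \eqref{err_int} at this lower regularity, because it uses $\|u_i^k\|_{L^\infty(0,T;L^\infty)}$. I would argue that the piecewise regularity $u_0\in\mathcal{H}$ still gives $u_i^k\in L^\infty(0,T;H^2(\Omega_i))$ subdomain by subdomain, and then apply the embedding on each $\Omega_i$ separately. A second point to check is that the mean-value step with $f'(\xi_i)$ is valid pointwise, which it is since $f$ is a polynomial. The remaining work is routine bookkeeping of constants.
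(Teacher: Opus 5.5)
Your proposal is correct and is essentially the paper's own argument. The paper only states that the proof "proceeds along similar lines" to Theorem~\ref{OSWR_thm_conv}, and the one new ingredient is exactly what you isolate: the nonzero initial errors contribute $\frac12\left(\|e_0\|^2+\|\mathfrak{e}_0\|^2\right)$ at every iteration $k$ and do not telescope, which gives the factor $\frac{K}{2}$. Two small bookkeeping points remain: $\boldsymbol{C}$ must absorb the maximum over $k\le K$ of the $k$-dependent bounds $\|u_i^k\|_{L^\infty(0,T;L^\infty(\Omega_i))}$ entering $C_1$, and it must also absorb an extra $2\gamma$ from adding $\gamma\int_0^t\|e_i^k\|^2$ to both sides, so that the full $H^1$ norm appears on the left.
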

The proof of Theorem \ref{incomplete_OSWR_thm_conv} proceeds along similar lines to that of Theorem \ref{OSWR_thm_conv}.

\subsection{Best Choice of the Robin Parameter $p$}
To predict the Robin parameter $p$, we linearize the RM model \eqref{RM} with respect to the constant initial solution. We denote $u_l, v_l$ as the linearized variables corresponding to the variables $u, v$. The linearized RM model is given by 
\begin{equation}\label{RM_linear}
\begin{cases}
\frac{\partial u_l}{\partial t}  = \gamma \Delta u_l +\kappa u_l -c_2v_l -c_2, & (x,t)\in\Omega\times(0,T),\\
\frac{\partial v_l}{\partial t} = bu_l - bc_3v_l + b-bc_3 , & (x,t)\in\Omega\times(0,T),
\end{cases}
\end{equation}
where $\kappa=c_1(a-1)-c_2$. Now, the OSWR method for the linearized RM model \eqref{RM_linear} is given by 
\begin{equation}\label{oswr1_linear}
\begin{aligned}
&\left\{
\begin{aligned}
\frac{\partial {u_l}_i^{k}}{\partial t}  = \gamma \Delta {u_l}_i^{k} +\kappa{u_l}_i^{k} -c_2{v_l}_i^{k} -c_2, \,\ \text{in} \,\ \Omega_{i,T},\\
\frac{\partial {v_l}_i^{k}}{\partial t}  = b{u_l}_i^{k} - bc_3{v_l}_i^{k} +b-bc_3 ,\,\ \text{in} \,\ \Omega_{i,T},  \\
\mathtt{B}_i {u_l}_i^{k}
 = 
\mathtt{B}_i {u_l}_{3-i}^{k-1},
\,\ \mbox{on}\,\ \Gamma_{T}.\\
\end{aligned}\right.
\end{aligned}
\end{equation}
For the subdomain error of the linearized RM model \eqref{RM_linear}, we denote ${e_l}_i^{k}=u_l-{u_l}_i^{k}$ and ${\mathfrak{e}_l}_i^{k}=v_l-{v_l}_i^{k}$. Subtracting \eqref{oswr1_linear} from \eqref{RM_linear}, we have the following subdomain error equations for $i=1, 2$ 
\begin{equation}\label{oswr1_linear_err}
\begin{aligned}
&\left\{
\begin{aligned}
\frac{\partial {e_l}_i^{k}}{\partial t}  = \gamma \Delta {e_l}_i^{k} +\kappa{e_l}_i^{k} -c_2{\mathfrak{e}_l}_i^{k}, \,\ \text{in} \,\ \Omega_{i,T},\\
\frac{\partial {\mathfrak{e}_l}_i^{k}}{\partial t}  = b{e_l}_i^{k} - bc_3{\mathfrak{e}_l}_i^{k} ,\,\ \text{in} \,\ \Omega_{i,T},  \\
\mathtt{B}_i {e_l}_i^{k}
 = 
\mathtt{B}_i {e_l}_{3-i}^{k-1},
\,\ \mbox{on}\,\ \Gamma_{T}.\\
\end{aligned}\right.
\end{aligned}
\end{equation} 
Applying the Fourier transform in time for the error equations in \eqref{oswr1_linear_err}, we have the following differential-algebraic equation
\begin{equation}\label{Fourier_err}
\begin{cases}
\mathrm{i}\omega {\widehat{e_l}}_i^{k} = \gamma \partial_{xx} {\widehat{e_l}}_i^{k} +\kappa {\widehat{e_l}}_i^{k} -c_2{\widehat{\mathfrak{e}_l}}_i^{k},\\
\mathrm{i}\omega {\widehat{\mathfrak{e}_l}}_i^{k}  = b{\widehat{e_l}}_i^{k} - bc_3{\widehat{\mathfrak{e}_l}}_i^{k},
\end{cases}
\end{equation}
where $\mathrm{i}=\sqrt{-1}$ and $\omega$ is Fourier variable. From \eqref{Fourier_err}$_2$ we obtain ${\widehat{\mathfrak{e}_l}}_i^{k} = \frac{b}{\mathrm{i}\omega + bc_3}{\widehat{e_l}}_i^{k}$, and using this relation in \eqref{Fourier_err}$_1$ we have to solve the following ODE in each subdomain
\begin{equation}
\frac{d^2 {\widehat{e_l}}_i^{k}}{d x^2} - \underbrace{\left( \frac{\mathrm{i}\omega}{\gamma} -\frac{\kappa}{\gamma} + \frac{c_2b}{\gamma(\mathrm{i}\omega + bc_3)} \right)}_{z_{\omega}}{\widehat{e_l}}_i^{k}=0,
\end{equation}
with the characteristic roots $\lambda_1=\sqrt{z_{\omega}}, \lambda_2=-\lambda_1$. We rewrite $\sqrt{z_{\omega}}$ as
\begin{equation}\label{sqrt_zw}
\sqrt{z_{\omega}}=\sqrt{\frac{\sqrt{R_p(\omega)^2 + I_p(\omega)^2} + R_p(\omega)}{2}} + \mathrm{i}\sign(\omega) \sqrt{\frac{\sqrt{R_p(\omega)^2 + I_p(\omega)^2} - R_p(\omega)}{2}}
\end{equation}
where $R_p(\omega)=\left( -\frac{\kappa}{\gamma} + \frac{c_2c_3b^2}{\gamma(b^2c_3^2+\omega^2)}\right)$ and $I_p(\omega)= \left( \frac{\omega}{\gamma} - \frac{\omega bc_2}{\gamma(b^2c_3^2 + \omega^2)}\right)$. Clearly $\Re(\lambda_1)>0$ and $\Re(\lambda_2)<0$. Therefore, we have the following subdomain solution 
\begin{equation}\label{sub_err}
\begin{cases}
{\widehat{e_l}}_1^{k}(x, \omega) = \chi_1^{k}(\omega) e^{\lambda_1 x},  & (x,\omega)\in(-\infty, 0)\times\mathbb{R},\\
{\widehat{e_l}}_2^{k}(x, \omega) = \chi_2^{k}(\omega) e^{-\lambda_1 x},  & (x,\omega)\in(0, \infty)\times\mathbb{R},
\end{cases}
\end{equation}
where $\chi_1^{k}(\omega)$ and $\chi_2^{k}(\omega)$ will be computed using the interface condition \eqref{oswr1_linear_err}$_3$. So using \eqref{sub_err} in \eqref{oswr1_linear_err}$_3$ we have  
\[ \chi_1^{k}(\omega) = \frac{p-\gamma\lambda_1}{p+\gamma\lambda_1}\chi_2^{k-1}(\omega),\; \chi_2^{k}(\omega) = \frac{p-\gamma\lambda_1}{p+\gamma\lambda_1}\chi_1^{k-1}(\omega).\]
Therefore, we have the following recurrence relation for the subdomain error as 
\begin{equation}\label{rec_relation}
{\widehat{e_l}}_i^{k}(x, \omega) = \underbrace{\left( \frac{p-\gamma\lambda_1}{p+\gamma\lambda_1} \right)^2}_{\rho(p, \omega)} {\widehat{e_l}}_i^{k-2}(x, \omega).
\end{equation}
From \eqref{rec_relation}, we have the convergence factor of the OSWR method as $\rho(p, \omega)$. For any given frequency, we can see that the convergence factor vanishes for $p=\gamma\lambda_1$. But one has to choose $p$ in such a way that it minimizes the maximum error for any given frequency, that is to say, find $p=p_t$ such that the following hold
\begin{equation}\label{minmax}
p_t:=\min_{p>0}\max_{[\omega_{\min}, \omega_{\max}]} \vert \rho(p, \omega)\vert,
\end{equation} 
where $\omega_{\min}=\frac{\pi}{T}, \omega_{\max}=\frac{\pi}{\Delta t}$ are the lowest and highest frequencies on a numerical grid. In practice, \eqref{minmax} can be solved numerically as a min-max problem, and we denote that by $p_{\texttt{num}}$. Here, we also analyze the min-max problem analytically. Let $r=\Re(\lambda_1)$, then the convergence factor $\vert\rho(p, \omega)\vert$ can be rewritten as 
\begin{equation}\label{minmax2}
\rho_s(\widetilde{p}, r)=\frac{(r - \widetilde{p})^2 + r^2 - s}{(r + \widetilde{p})^2 + r^2 - s},
\end{equation}
where $\widetilde{p}=\frac{p}{\gamma}$, and $s\in [R_p(\omega_{\min}), R_p(\omega_{\max})]$. It is easy to observe that for fixed $\widetilde{p}$ and $r$, the function $\rho_s$ in \eqref{minmax2} is monotonically decreasing in $s$, thus we have  
\begin{equation}\label{minmax3}
\rho_s(\widetilde{p}, r)\leq \rho(\widetilde{p}, r)=\frac{(r - \widetilde{p})^2 + r^2 - R_p(\omega_{\min})}{(r + \widetilde{p})^2 + r^2 - R_p(\omega_{\min})}.
\end{equation}
Therefore, it is enough to solve the following min-max problem  
\begin{equation}\label{minmax4}
p_a:=\min_{\widetilde{p}>0}\max_{[\sqrt{R_p(\omega_{\min})}, r_{\max}]} \rho(\widetilde{p}, r),
\end{equation} 
where $r_{\max}=\Re(\lambda_1(\omega_{\max}))$. The solution of the above min-max problem \eqref{minmax4} is summarized in the following theorem. 

\begin{theorem}\label{thm_robinpara}
The best choice of the Robin parameter $p$ is given by 
\begin{equation*}
p_a= \begin{cases}
\gamma\sqrt{\sqrt{R_p(\omega_{\min})}\left(2r_{\max} + \sqrt{R_p(\omega_{\min})} \right)} &\text{for}\; r_{\max}\geq \frac{1+\sqrt{5}}{2}\sqrt{R_p(\omega_{\min})},\\
\gamma \sqrt{2r_{\max}^2 - R_p(\omega_{\min})} &\text{for}\; r_{\max}< \frac{1+\sqrt{5}}{2}\sqrt{R_p(\omega_{\min})},
\end{cases}
\end{equation*}
where $p_a$ is the solution of the min-max problem \eqref{minmax4}.
\end{theorem}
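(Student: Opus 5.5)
The plan is to analyze the explicit rational function in \eqref{minmax4} by an equioscillation argument. Write $q:=\sqrt{R_p(\omega_{\min})}$, so $s_0:=R_p(\omega_{\min})=q^2$, and keep $\widetilde p$ as the variable; we multiply by $\gamma$ only at the end to get $p_a$. Since the numerator minus the denominator of $\rho$ equals $-4\widetilde p r$, we have
\[
\rho(\widetilde p,r)=1-\frac{4\widetilde p\, r}{D(\widetilde p,r)},\qquad D(\widetilde p,r)=2r^2+2\widetilde p r+\widetilde p^{2}-q^2 .
\]
Minimizing $\rho$ in $r$ is therefore equivalent to maximizing $h(r)=r/D(\widetilde p,r)$. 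The sign of $\partial_r h$ is the sign of $\widetilde p^{2}-q^2-2r^2$. Hence, for fixed $\widetilde p>q$, the map $r\mapsto\rho(\widetilde p,r)$ decreases up to $r^\ast=\sqrt{(\widetilde p^{2}-q^2)/2}$ and increases afterwards. The first step is to conclude that the maximum over $[q,r_{\max}]$ is attained at an endpoint $r=q$ or $r=r_{\max}$ (or at a single endpoint if $r^\ast$ falls outside the interval). I will also record that $\widetilde p\le q$ can be discarded, since there $\rho(\widetilde p,q)$ is not smaller than at $\widetilde p$ slightly larger than $q$.

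Next I study the two endpoint values as functions of $\widetilde p$. At the left endpoint $\rho(\widetilde p,q)=\bigl(\tfrac{\widetilde p-q}{\widetilde p+q}\bigr)^2$, which is increasing in $\widetilde p$ for $\widetilde p>q$. At the right endpoint, $\rho(\widetilde p,r_{\max})$ should decrease in $\widetilde p$ on the relevant range; I check this by differentiating $\widetilde p/D(\widetilde p,r_{\max})$. By the usual min-max argument for Robin parameters, the optimum is then at the balance point $\rho(\widetilde p,q)=\rho(\widetilde p,r_{\max})$, i.e. $h(q)=h(r_{\max})$. This reads
\[
q\bigl(2r_{\max}^2+2\widetilde p r_{\max}+\widetilde p^{2}-q^2\bigr)=r_{\max}(\widetilde p+q)^2 .
\]
The $\widetilde p$-linear terms cancel, and the equation factors as $\widetilde p^{2}(q-r_{\max})=q(q-r_{\max})(q+2r_{\max})$. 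This gives $\widetilde p^{2}=q(q+2r_{\max})$, which is exactly the first branch of Theorem \ref{thm_robinpara}.

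The remaining, and in my view hardest, step is the case distinction. For the balance value one finds $r^{\ast 2}=qr_{\max}$, so $r^\ast$ always lies in $[q,r_{\max}]$; the threshold therefore cannot come from $r^\ast$ leaving the interval. What I observe is that the two candidate values $q(q+2r_{\max})$ and $2r_{\max}^2-q^2$ coincide exactly when $r_{\max}^2=qr_{\max}+q^2$, i.e. $r_{\max}=\frac{1+\sqrt5}{2}q$. So the golden-ratio threshold is precisely where the two formulas switch. I would try to show that for $r_{\max}<\frac{1+\sqrt5}{2}q$ the monotonicity in $\widetilde p$ assumed above fails at the balance point. Concretely, I will check whether $\rho(\widetilde p,r_{\max})$ still decreases there. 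If it does not, the min-max is determined by the stationarity condition $\partial_{\widetilde p}\rho(\widetilde p,r_{\max})=0$ (equivalently $D=\widetilde p\,\partial_{\widetilde p}D$ at $r_{\max}$), and I would verify that this yields $\widetilde p^{2}=2r_{\max}^2-q^2$. I expect this to be the main obstacle: the constants there must be matched exactly, and if the stationarity computation instead produces $2r_{\max}^2+q^2$, the second branch would have to be re-derived. Finally, I compare the maximal values of the two candidates on each side of the threshold and multiply by $\gamma$ to obtain $p_a$.
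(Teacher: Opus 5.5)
Your argument is correct. It takes a different, self-contained route from the paper, which gives no computation and only appeals to Theorem 5.17 of \cite{gander2007optimized}, leaving the reader to check that \eqref{minmax4} has the form treated there. Your direct calculus buys transparency.
\begin{itemize}
\item Since $\partial_{\widetilde p}\bigl(\widetilde p/D(\widetilde p,r_{\max})\bigr)$ has the sign of $2r_{\max}^2-q^2-\widetilde p^{2}$, your stationarity condition gives $\widetilde p_2^{2}=2r_{\max}^2-q^2$ (with a minus sign), and $\widetilde p_2$ is the unique minimizer of the right-endpoint value.
\item The golden-ratio threshold is exactly the condition $\widetilde p_1\le \widetilde p_2$, where $\widetilde p_1^{2}=q(q+2r_{\max})$ is your balance point. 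Indeed, $q^2+2qr_{\max}\le 2r_{\max}^2-q^2$ is equivalent to $r_{\max}^2-qr_{\max}-q^2\ge 0$.
\end{itemize}
The paper's citation is shorter, but the mechanism behind the case split stays hidden.

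Two points should be tightened.
\begin{itemize}
\item Your reason for discarding $\widetilde p\le q$ is misstated. At $\widetilde p=q$ the left-endpoint value vanishes, so it is smaller than at any $\widetilde p>q$. The correct reason is that for $\widetilde p\le q$ the maximum over $r$ is the right-endpoint value, and this value is still decreasing there because $\widetilde p_2>q$.
\item Do not compare candidate values at the end. Instead, record that the two endpoint curves cross only at $\widetilde p_1$; this is your factorization, which has a unique positive root when $r_{\max}\neq q$. At $\widetilde p=q$ the left value is $0$ while $\rho(q,r_{\max})>0$. Hence $M(\widetilde p):=\max_{r}\rho(\widetilde p,r)$ equals $\rho(\widetilde p,r_{\max})$ on $(0,\widetilde p_1]$ and $\rho(\widetilde p,q)$ on $[\widetilde p_1,\infty)$. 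The latter is increasing there since $\widetilde p_1>q$. It follows that $M$ decreases up to $\min\{\widetilde p_1,\widetilde p_2\}$ and increases afterwards. This gives both branches at once. It also confirms, in the second branch, that the right endpoint really dominates at $\widetilde p_2$.
\end{itemize}
Multiplying by $\gamma$ then gives $p_a$.
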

The proof of Theorem \ref{thm_robinpara} follows from Theorem 5.17 in \cite{gander2007optimized}. Note that for certain values of $\kappa$, $R_p$ may be negative. But for the practical use of the parameters involved in the expression of $R_p$, we always have $R_p$ to be positive.

To find the Robin parameter in 2D, we apply the Fourier transform in time and spatial variable along $y$-direction for the error equations in \eqref{oswr1_linear_err}, then we have the following differential-algebraic equation
\begin{equation}\label{Fourier_err_2d}
\begin{cases}
\mathrm{i}\omega {\widehat{\widehat{e_l}}}_i^{k} = \gamma \partial_{xx} {\widehat{\widehat{e_l}}}_i^{k}-\gamma \zeta^2 {\widehat{\widehat{e_l}}}_i^{k} +\kappa {\widehat{\widehat{e_l}}}_i^{k} -c_2{\widehat{\widehat{\mathfrak{e}_l}}}_i^{k},\\
\mathrm{i}\omega {\widehat{\widehat{\mathfrak{e}_l}}}_i^{k}  = b{\widehat{\widehat{e_l}}}_i^{k} - bc_3{\widehat{\widehat{\mathfrak{e}_l}}}_i^{k},
\end{cases}
\end{equation}
where $\zeta$ is the Fourier variable corresponding to $y$-direction. From \eqref{Fourier_err_2d}$_2$ we obtain ${\widehat{\widehat{\mathfrak{e}_l}}}_i^{k} = \frac{b}{\mathrm{i}\omega + bc_3}{\widehat{\widehat{e_l}}}_i^{k}$, and using this relation in \eqref{Fourier_err_2d}$_1$ we solve the following ODE in each subdomain
\begin{equation}
\frac{d^2 {\widehat{\widehat{e_l}}}_i^{k}} {d x^2} - \underbrace{\left(z_{\omega}+\zeta^2\right)}_{z_{\omega, \zeta}}{\widehat{\widehat{e_l}}}_i^{k}=0,
\end{equation}
with the characteristic roots $\lambda_1^{2d}=\sqrt{z_{\omega, \zeta}}, \lambda_2^{2d}=-\lambda_1^{2d}$. The expression of $\lambda_1^{2d}$ has the same form as in \eqref{sqrt_zw} with a replacement of $R_p(\omega)$ by $R_p(\omega, \zeta)$ in \eqref{sqrt_zw}, where $R_p(\omega, \zeta)=R_p(\omega)+\zeta^2$. Now, similar to one-dimensional analysis, we obtain the following subdomain error recurrence relation
\begin{equation}\label{rec_relation_2d}
{\widehat{\widehat{e_l}}}_i^{k}(x, \omega, \zeta) = \underbrace{\left( \frac{p-\gamma\lambda_1^{2d}}{p+\gamma\lambda_1^{2d}} \right)^2}_{\rho(p, \omega, \zeta)} {\widehat{\widehat{e_l}}}_i^{k-2}(x, \omega, \zeta),
\end{equation}
where $\rho(p, \omega, \zeta)$ is the convergence factor. 
So to find $p$ we have to solve the following min-max problem
\begin{equation}\label{minmax-2d}
p_t:=\min_{p>0}\max_{[\omega_{\min}, \omega_{\max}]} \max_{[\zeta_{\min}, \zeta_{\max}]}\vert \rho(p, \omega, \zeta)\vert,
\end{equation} 
where $\zeta_{\min}, \zeta_{\max}$ are the lowest and highest frequencies on a numerical grid.
\subsubsection{Reduced dynamics and prediction of $p$}
Here, we discuss another way to predict the Robin parameter $p$. We notice that the recovery variable $v$ is small in magnitude, and also, the coefficient $c_2$ is of $\mathcal{O}(10^{-1})$ or smaller in practice. Thus, by ignoring the behavior of the recovery variable, we have from \eqref{RM_linear}
\begin{equation}\label{RM_reduced}
\frac{\partial u_l}{\partial t}  = \gamma \Delta u_l +\kappa u_l,  (x,t)\in\Omega\times(0,T).
\end{equation}
Following \cite{gander2010optimized}, we find the Robin parameter $p=p_{\texttt{red}}$ for the linear reaction-diffusion equation \eqref{RM_reduced} as
\begin{equation}
p_{\texttt{red}}=\sqrt{\pi\gamma}\left( 2\sqrt{(4\gamma\kappa + 4\gamma^2\zeta_{\min}^2)^2 +16\gamma^2\omega_{\min}} + 4\gamma\kappa + 4\gamma^2\zeta_{\min}^2 \right)^{\frac{1}{4}} \left( \Delta t \right)^{-\frac{1}{2}}.
\end{equation}

\section{Convergence of Parareal-OSWR method}\label{Section4}
The Parareal-OSWR method for the RM model is to use \eqref{discrete_RM} as coarse operator and OSWR method \eqref{oswr1} with incomplete interaction as fine operator in the predictor-corrector scheme \eqref{parareal_rm}. This procedure is explained in the following Algorithm~\ref{alg:parareal}.
\begin{algorithm}
\caption{Parareal - incomplete OSWR Method}
\label{alg:parareal}
\begin{algorithmic}[1]
    \State Initialize the Parareal iteration by solving the coarse operator, i.e., for $\mathtt{X}_{0}^0=[u_0, v_0]^{\top}$ obtain $\mathtt{X}_n^{0}=\mathtt{G}(T_{n}, T_{n-1}, \mathtt{X}_{n-1}^0)$ for $n=1,2,\cdots, N$.
    \State Initialize the Robin data by taking $\boldsymbol{g}_{n}^{0,0}=\left({g}_{1,n}^{0,0}, {g}_{2,n}^{0,0} \right)$ on the time slice $[T_{n-1}, T_n]$ for $n=1,2,\cdots, N$. A possible choice of ${g}_{i,n}^{0,0} = {\mathtt{B}_i {U}_n^{0}}_{|\Gamma_T}$ for $i=1, 2$. 
    \For{$l = 0, 1, \cdots $ \textit{(Parareal loop)}}
         \State Compute $K$ incomplete OSWR iteration by 
         \begin{equation}
         \left( [u_n^{l,K}, v_n^{l,K}]^{\top}, \boldsymbol{g}_n^{l,K} \right)= \text{OSWR}_{K} \left( \mathtt{X}_n^{l}, \boldsymbol{g}_n^{l,0} \right)
         \end{equation}
         in parallel on the time slices $[T_{n-1}, T_n]$ for $n=1,2,\cdots, N$.
        \State Do the following correction:
\begin{equation}\label{parareal_inc_oswr}
    \begin{aligned}
       \mathtt{X}_0^{l+1} & =[ u_0, v_0]^{\top},\\
      \mathtt{X}_{n+1}^{l+1} & =\mathtt{G}(T_{n+1}, T_n, \mathtt{X}_n^{l+1})+[u_n^{l,K}(\cdot, T_{n+1}), v_n^{l,K}(\cdot, T_{n+1})]^{\top}-\mathtt{G}(T_{n+1}, T_n, \mathtt{X}_n^l),
    \end{aligned}       
\end{equation}
        \State Update the space-time interface trace by \begin{equation}\label{parareal_inc_oswr2}
    \boldsymbol{g}_n^{l+1,0}= \boldsymbol{g}_n^{l,K}      
\end{equation}
    \EndFor
\end{algorithmic}
\end{algorithm}

To prove the convergence of the Parareal-incomplete OSWR method, we define the following error quantities
\begin{itemize}
 \item  error at coarse time points as $\mathcal{E}_{n}^l:=[U_n^l-u(\cdot, T_n), V_n^l-v(\cdot, T_n)]^{\top}$ for $n=0,1,\cdots, N$, 
 \item  error in the space-time slice $\Omega\times[T_{n-1}, T_n]$ as $e_n^{l,k}:=u_n^{l,k}-u, \mathfrak{e}_n^{l,k}:=v_n^{l,k}-v$ such that $e_{i,n}^{l,k}:= {e_n^{l,k}}_{|\Omega_i}, \mathfrak{e}_{i,n}^{l,k}:= {\mathfrak{e}_n^{l,k}}_{|\Omega_i}$ for $n=0,1,\cdots, N$,
 \item error at the space-time inteface as $ \boldsymbol{\eta}_n^{l,k}=  \boldsymbol{g}_n^{l,k}-  \boldsymbol{g}_n$, where $ \boldsymbol{g}_n=[{\mathtt{B}_1 u_n^{0}}_{|\Gamma\times [T_{n-1}, T_n]}, {\mathtt{B}_2 u_n^{0}}_{|\Gamma\times [T_{n-1}, T_n]}]$.
\end{itemize}
\begin{lemma}\label{PA_lemma1}
For $n=0,1, \cdots, N-1$ and coarse step $\Delta T$ satisfying \eqref{coarse_time_step}, the inequality holds:
\begin{equation}\label{PA_err}
\sum_{l=0}^{L} \parallel \mathcal{E}_{n+1}^{l+1}\parallel^2 \leq 8C_{\texttt{Lip}}^2 \sum_{l=0}^{L+1}  \parallel \mathcal{E}_{n}^{l}\parallel^2  + 2 \sum_{l=0}^L \left( \parallel e_n^{l,k}(\cdot, T_{n+1}) \parallel^2 + \parallel \mathfrak{e}_n^{l,k}(\cdot, T_{n+1}) \parallel^2 \right).
\end{equation}
\end{lemma}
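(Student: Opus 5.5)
The plan is to subtract the exact solution from the correction step \eqref{parareal_inc_oswr}, written at the coarse point $T_{n+1}$. Since the exact solution is a fixed point of the Parareal iteration, we write
\[
\begin{bmatrix} u(\cdot,T_{n+1})\\ v(\cdot,T_{n+1})\end{bmatrix}
= \mathtt{G}\bigl(T_{n+1},T_n,[u(\cdot,T_n),v(\cdot,T_n)]^{\top}\bigr)
+ \begin{bmatrix} u(\cdot,T_{n+1})\\ v(\cdot,T_{n+1})\end{bmatrix}
- \mathtt{G}\bigl(T_{n+1},T_n,[u(\cdot,T_n),v(\cdot,T_n)]^{\top}\bigr).
\]
Subtracting this identity from \eqref{parareal_inc_oswr} yields
\[
\mathcal{E}_{n+1}^{l+1} = \Bigl(\mathtt{G}(\mathtt{X}_n^{l+1})-\mathtt{G}(\mathtt{X}(T_n))\Bigr) - \Bigl(\mathtt{G}(\mathtt{X}_n^{l})-\mathtt{G}(\mathtt{X}(T_n))\Bigr) + [e_n^{l,K}(\cdot,T_{n+1}),\mathfrak{e}_n^{l,K}(\cdot,T_{n+1})]^{\top}.
\]
Here $\mathtt{X}(T_n)$ denotes the exact state at $T_n$, and the last bracket is the error of the incomplete OSWR fine solve at the slice endpoint. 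The lemma writes the iteration index as $k$; I read it as the final index $K$.

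Next we take $L^2$ norms and apply $\|a+b+c\|^2\le 2\|a+b\|^2+2\|c\|^2$ together with $\|a+b\|^2\le 2\|a\|^2+2\|b\|^2$. This gives
\[
\|\mathcal{E}_{n+1}^{l+1}\|^2 \le 4\|\mathtt{G}(\mathtt{X}_n^{l+1})-\mathtt{G}(\mathtt{X}(T_n))\|^2 + 4\|\mathtt{G}(\mathtt{X}_n^{l})-\mathtt{G}(\mathtt{X}(T_n))\|^2 + 2\bigl(\|e_n^{l,K}(T_{n+1})\|^2+\|\mathfrak{e}_n^{l,K}(T_{n+1})\|^2\bigr).
\]
By Lemma \ref{Lipschitz_c_op_ct}, which applies because $\Delta T$ satisfies \eqref{coarse_time_step}, each $\mathtt{G}$-difference is bounded by $C_{\texttt{Lip}}$ times the corresponding error. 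The first two terms are therefore at most $4C_{\texttt{Lip}}^2(\|\mathcal{E}_n^{l+1}\|^2+\|\mathcal{E}_n^{l}\|^2)$.

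Then we sum over $l=0,\dots,L$. The index-shifted sum satisfies $\sum_{l=0}^L\|\mathcal{E}_n^{l+1}\|^2\le\sum_{l=0}^{L+1}\|\mathcal{E}_n^{l}\|^2$, and likewise $\sum_{l=0}^L\|\mathcal{E}_n^{l}\|^2\le\sum_{l=0}^{L+1}\|\mathcal{E}_n^l\|^2$. Adding the two contributions produces the constant $8C_{\texttt{Lip}}^2$ in \eqref{PA_err}.

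The main point to check is the use of Lemma \ref{Lipschitz_c_op_ct}. That lemma's inequality is stated for componentwise squared norms with constant $C_{\texttt{Lip}}$, so in reading it as $\|\mathtt{G}\mathtt{X}-\mathtt{G}\mathtt{Y}\|\le C_{\texttt{Lip}}\|\mathtt{X}-\mathtt{Y}\|$ I would take the product norm $\|[u,v]\|^2=\|u\|^2+\|v\|^2$. I would then verify that summing the two component inequalities in the matrix bound indeed gives this product-norm Lipschitz estimate, possibly with $C_{\texttt{Lip}}$ playing the role of the squared constant. Any discrepancy there only changes the constant, and I would adopt the lemma's form as stated. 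A second detail is the case $n=0$: there $\mathcal{E}_0^l=0$ because $\mathtt{X}_0^{l}=[u_0,v_0]^{\top}$, so the estimate holds trivially in its first term.
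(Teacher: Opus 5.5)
Your proposal is correct and follows the paper's proof essentially line by line. The paper also adds and subtracts $\mathtt{G}(T_{n+1},T_n,\cdot)$ at the exact state, and identifies $\mathtt{F}(T_{n+1},T_n,\mathtt{X}_n^l)-[u(\cdot,T_{n+1}),v(\cdot,T_{n+1})]^{\top}$ with the final incomplete-OSWR error, so reading $k$ as $K$ is the intended meaning. It then uses Lemma~\ref{Lipschitz_c_op_ct} with $(x+y)^2\le 2x^2+2y^2$ to get $4C_{\texttt{Lip}}^2\bigl(\|\mathcal{E}_n^{l+1}\|^2+\|\mathcal{E}_n^{l}\|^2\bigr)$ plus twice the fine error, and enlarges both shifted sums to $\sum_{l=0}^{L+1}$ to obtain $8C_{\texttt{Lip}}^2$. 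Your caveat about Lemma~\ref{Lipschitz_c_op_ct} is fair, since its proof actually bounds squared norms and the constant is not tracked consistently. This only affects the value of the constant, though, and invoking the lemma as stated, exactly as the paper does, is all this statement needs.
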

\begin{proof}~
Using the Parareal scheme \eqref{parareal_rm} in the definition of the error term $\mathcal{E}_{n+1}^{l+1}$ we obtain 
\begin{equation}\label{PA_err1}
\begin{aligned}
\mathcal{E}_{n+1}^{l+1} &= \mathtt{G}(T_{n+1}, T_n, \mathtt{X}_n^{l+1})+\mathtt{F}(T_{n+1}, T_n, \mathtt{X}_n^l)-\mathtt{G}(T_{n+1}, T_n, \mathtt{X}_n^l)- [u(\cdot, T_{n+1}), v(\cdot, T_{n+1})]^{\top}\\
&=\mathtt{G}(T_{n+1}, T_n, \mathtt{X}_n^{l+1}) - \mathtt{G}(T_{n+1}, T_n, \mathtt{X}_n) +\mathtt{F}(T_{n+1}, T_n, \mathtt{X}_n^l)- [u(\cdot, T_{n+1}), v(\cdot, T_{n+1})]^{\top}\\& + \mathtt{G}(T_{n+1}, T_n, \mathtt{X}_n) -\mathtt{G}(T_{n+1}, T_n, \mathtt{X}_n^l).
\end{aligned}
\end{equation}
Taking norm and using triangle inequality in \eqref{PA_err1}, and then using the Lemma \ref{Lipschitz_c_op_ct} we have 
\begin{equation}\label{PA_err2}
\begin{aligned}
\parallel\mathcal{E}_{n+1}^{l+1}\parallel^2 &\leq  2 \parallel [e_n^{l,k}(\cdot, T_{n+1}), \mathfrak{e}_n^{l,k}(\cdot, T_{n+1})]^{\top}] \parallel^2 + 2C_{\texttt{Lip}}^2 \left( \parallel\mathcal{E}_{n}^{l+1}\parallel  + \parallel\mathcal{E}_{n}^{l}\parallel \right)^2\\
&\leq  2 \left( \parallel e_n^{l,k}(\cdot, T_{n+1}) \parallel^2 + \parallel \mathfrak{e}_n^{l,k}(\cdot, T_{n+1}) \parallel^2 \right) + 4C_{\texttt{Lip}}^2 \left( \parallel\mathcal{E}_{n}^{l+1}\parallel^2  + \parallel\mathcal{E}_{n}^{l}\parallel^2 \right).
\end{aligned}
\end{equation}
Taking sum in \eqref{PA_err2} for $l=0,1,\cdots, L$ we get
\begin{equation}\label{PA_err3}
\sum_{l=0}^{L} \parallel \mathcal{E}_{n+1}^{l+1}\parallel^2 \leq 4C_{\texttt{Lip}}^2 \sum_{l=0}^{L} \left( \parallel \mathcal{E}_{n}^{l+1}\parallel^2 +\parallel \mathcal{E}_{n}^{l}\parallel^2\right) + 2 \sum_{l=0}^L \left( \parallel e_n^{l,k}(\cdot, T_{n+1}) \parallel^2 + \parallel \mathfrak{e}_n^{l,k}(\cdot, T_{n+1}) \parallel^2 \right).
\end{equation}
From \eqref{PA_err3} we derive \eqref{PA_err}.
\end{proof}

\begin{lemma}\label{PA_lemma2}
For $n=0,1, \cdots, N-1$, the following inequality holds:
\begin{equation}\label{PA_err0}
\sum_{l=0}^L \left( \parallel e_n^{l,k}(\cdot, T_{n+1}) \parallel^2 + \parallel \mathfrak{e}_n^{l,k}(\cdot, T_{n+1}) \parallel^2 \right) \leq \frac{K\sigma}{2} \sum_{l=0}^L \parallel \mathcal{E}_{n}^{l}\parallel^2  +\frac{\gamma\sigma}{4p} \parallel\boldsymbol{\eta}_n^{0,0}\parallel^2,
\end{equation}
where $\sigma=\left( 1+\frac{e^{\boldsymbol{C} \Delta T}-1}{\boldsymbol{C}} \right)$.
\end{lemma}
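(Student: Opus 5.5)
The plan is to apply the incomplete-OSWR estimate of Theorem \ref{incomplete_OSWR_thm_conv} on each time slice $[T_n,T_{n+1}]$, with the initial time shifted to $T_n$. There the incomplete OSWR is started from the Parareal value $\mathtt{X}_n^{l}$, so its initial error is exactly $\mathcal{E}_n^l$. This gives $\parallel e_0\parallel^2+\parallel \mathfrak{e}_0\parallel^2=\parallel\mathcal{E}_n^l\parallel^2$.

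\textbf{Step 1 (one Parareal iterate).} For fixed $l$, set
\[
\mathtt{E}_n^{l}(t):=\sum_{k=1}^K\Big(\tfrac12\parallel e_n^{l,k}(\cdot,t)\parallel^2+\tfrac12\parallel \mathfrak{e}_n^{l,k}(\cdot,t)\parallel^2\Big).
\]
Dropping the nonnegative gradient term and the final interface term $\frac{\gamma}{4p}\parallel\boldsymbol{\eta}_n^{l,K}\parallel^2$ in \eqref{OSWR_est_inc} leaves
\[
\mathtt{E}_n^l(t)\le \tfrac K2\parallel\mathcal{E}_n^l\parallel^2+\tfrac{\gamma}{4p}\parallel\boldsymbol{\eta}_n^{l,0}\parallel^2+\boldsymbol{C}\int_{T_n}^t\mathtt{E}_n^l(s)\,ds .
\]
Gr\"onwall on an interval of length $\Delta T$ then bounds $\mathtt{E}_n^l(T_{n+1})$ by $A\,(1+\boldsymbol{C}\int_{T_n}^{T_{n+1}}e^{\boldsymbol{C}(T_{n+1}-s)}ds)$. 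Here $A$ is the constant part of the right-hand side, and the Gr\"onwall factor equals $e^{\boldsymbol{C}\Delta T}$. Its form $1+\boldsymbol{C}\int(\cdot)$, after the normalisation used in the statement, produces the factor $\sigma=1+\frac{e^{\boldsymbol{C}\Delta T}-1}{\boldsymbol{C}}$. I would match constants so that exactly this $\sigma$ appears, possibly applying Gr\"onwall in the form used in \eqref{err_est10}. The left side of \eqref{PA_err0} involves only the final iterate $k=K$ at $T_{n+1}$, and that term is dominated by the sum over $k$ (up to the factor $2$ absorbed as in the statement).

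\textbf{Step 2 (telescoping the interface errors).} This is the key point, and the step I expect to be the main obstacle. I would keep rather than discard the term $\frac{\gamma}{4p}\parallel\boldsymbol{\eta}_n^{l,K}\parallel^2$ on the left of \eqref{OSWR_est_inc}, multiplying both sides of the Gr\"onwall-closed estimate by $\sigma\ge1$ so that the structure is preserved. By the update rule \eqref{parareal_inc_oswr2}, $\boldsymbol{\eta}_n^{l+1,0}=\boldsymbol{\eta}_n^{l,K}$, since both equal $\boldsymbol{g}_n^{l,K}-\boldsymbol{g}_n$. Summing over $l=0,\dots,L$, the interface terms telescope. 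Only $\frac{\gamma\sigma}{4p}\parallel\boldsymbol{\eta}_n^{0,0}\parallel^2$ survives on the right, while a nonnegative $\frac{\gamma}{4p}\parallel\boldsymbol{\eta}_n^{L+1,0}\parallel^2$ remains on the left and can be dropped.

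The delicate part is making the telescoping compatible with Gr\"onwall. The interface norm at time $t$ appears on both sides, so I must apply Gr\"onwall to the quantity $\mathtt{E}+\mathtt{H}^K$ rather than to $\mathtt{E}$ alone. The estimate is closed before multiplication by $\sigma$, and monotonicity in $t$ of the interface norms $\parallel\boldsymbol{\eta}(t)\parallel^2$ (integrals over $[T_n,t]$) justifies evaluating them at $T_{n+1}$.

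\textbf{Step 3 (conclusion).} The initial-error contributions sum to $\frac{K\sigma}{2}\sum_{l=0}^L\parallel\mathcal{E}_n^l\parallel^2$. Together with the surviving interface term from Step 2, this yields \eqref{PA_err0}.
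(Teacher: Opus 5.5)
Your ingredients match the paper's: Theorem \ref{incomplete_OSWR_thm_conv} on $[T_n,T_{n+1}]$ with initial error $\mathcal{E}_n^l$, the identity $\boldsymbol{\eta}_n^{l+1,0}=\boldsymbol{\eta}_n^{l,K}$, telescoping in $l$, and Gr\"onwall. The gap is the order in which you combine them. In Step 1 you close the estimate with Gr\"onwall separately for each $l$, and only in Step 2 do you sum. Then the amplification factor ($e^{\boldsymbol{C}\Delta T}$, or $\sigma$) multiplies only the data on the right, $\frac{K}{2}\|\mathcal{E}_n^l\|^2+\frac{\gamma}{4p}\|\boldsymbol{\eta}_n^{l,0}\|^2$. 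The term $\frac{\gamma}{4p}\|\boldsymbol{\eta}_n^{l,K}\|^2=\frac{\gamma}{4p}\|\boldsymbol{\eta}_n^{l+1,0}\|^2$ that you keep on the left still has coefficient $1$. Summing over $l$ therefore leaves $(\sigma-1)\frac{\gamma}{4p}\sum_{l=1}^{L}\|\boldsymbol{\eta}_n^{l,0}\|^2$ uncancelled on the right, and the right-hand side of \eqref{PA_err0} does not control it. Read as a recursion in $l$, the per-$l$ bound puts a factor of order $\sigma^{L+1}$ in front of $\|\boldsymbol{\eta}_n^{0,0}\|^2$. That destroys exactly the $L$-independence the subsequent convergence theorem needs. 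Multiplying through by $\sigma$, or applying Gr\"onwall to $\mathtt{E}+\mathtt{H}^K$ instead of $\mathtt{E}$, does not help: no form of Gr\"onwall produces the factor in front of the left-hand interface term.

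The missing idea is to sum over $l$ \emph{before} applying Gr\"onwall, which is what the paper does. Sum the unclosed estimates \eqref{OSWR_est_inc} on $[T_n,t]$ over $l=0,\dots,L$. With coefficient $1$ on both sides the interface terms telescope exactly: $\frac{\gamma}{4p}\|\boldsymbol{\eta}_n^{L+1,0}(t)\|^2$ remains on the left (drop it) and $\frac{\gamma}{4p}\|\boldsymbol{\eta}_n^{0,0}(t)\|^2$ on the right. The integral terms add up to $\boldsymbol{C}\int_{T_n}^t\mathcal{G}^{L,K}(s)\,ds$, where $\mathcal{G}^{L,K}(t)=\sum_{l=0}^L\sum_{k=1}^K\bigl(\frac{1}{2}\|e_n^{l,k}(\cdot,t)\|^2+\frac{1}{2}\|\mathfrak{e}_n^{l,k}(\cdot,t)\|^2\bigr)$. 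A single Gr\"onwall step for $\mathcal{G}^{L,K}$ then puts one $L$-independent factor on both data terms at $t=T_{n+1}$; this uses, as you correctly note, that $\|\boldsymbol{\eta}_n^{0,0}(t)\|^2$ is nondecreasing in $t$.

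Two smaller remarks. First, Gr\"onwall delivers $e^{\boldsymbol{C}\Delta T}$, and you cannot ``match constants'' to get $\sigma$ instead, since $\sigma\ge e^{\boldsymbol{C}\Delta T}$ holds only when $\boldsymbol{C}\le 1$. Second, going from $\mathcal{G}^{L,K}(T_{n+1})$, which carries the weight $\frac{1}{2}$, to the left side of \eqref{PA_err0} costs a factor $2$ that the statement does not absorb. The paper's proof passes over both points silently, and neither affects the subsequent convergence theorem.
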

\begin{proof}~
Adapting the error estimate in Theorem \ref{incomplete_OSWR_thm_conv} for incomplete OSWR in the time slice $[T_{n}, T_{n+1}]$ with $\mathcal{E}_{n}^{l}\in\mathcal{H}$ as initial error, $\boldsymbol{\eta}_n^{l,k}$ as interface error and ${e}_n^{l,k}, \mathfrak{e}_n^{l,k}$ as subdomain error we obtain
\begin{multline}\label{OSWR_est_parareal}
 \sum_{k=1}^K  \left( \frac{1}{2}\parallel {e}_n^{l,k}(\cdot,t)\parallel^2 + \frac{1}{2}\parallel \mathfrak{e}_n^{l,k}(\cdot, t)\parallel^2 +\gamma\sum_{i=1}^2 \parallel {e}_{n,i}^{l,k}\parallel_{L^2(0,t;H^1(\Omega_i))}^2\right) +\frac{\gamma}{4p} \parallel\boldsymbol{\eta}_n^{l,K}(t)\parallel^2 \leq \\ \frac{K}{2} \parallel \mathcal{E}_{n}^{l}\parallel^2  +\frac{\gamma}{4p} \parallel\boldsymbol{\eta}_n^{l,0}(t)\parallel^2 + \boldsymbol{C}\sum_{k=1}^K\int_{T_{n}}^t \left( \frac{1}{2}\parallel {e}_n^{l, k}(\cdot,s)\parallel^2 + \frac{1}{2}\parallel \mathfrak{e}_n^{l,k}(\cdot, s)\parallel^2 \right).
\end{multline}
Observe that $\boldsymbol{\eta}_n^{l+1,0}= \boldsymbol{\eta}_n^{l,K}$ hold at the space-time interface, and using this in \eqref{OSWR_est_parareal} we have  
\begin{multline}\label{OSWR_est_parareal2}
 \sum_{k=1}^K  \left( \frac{1}{2}\parallel {e}_n^{l,k}(\cdot,t)\parallel^2 + \frac{1}{2}\parallel \mathfrak{e}_n^{l,k}(\cdot, t)\parallel^2 +\gamma\sum_{i=1}^2 \parallel {e}_{n,i}^{l,k}\parallel_{L^2(0,t;H^1(\Omega_i))}^2\right) +\frac{\gamma}{4p} \parallel\boldsymbol{\eta}_n^{l+1,0}(t)\parallel^2 \leq \\ \frac{K}{2} \parallel \mathcal{E}_{n}^{l}\parallel^2  +\frac{\gamma}{4p} \parallel\boldsymbol{\eta}_n^{l,0}(t)\parallel^2 + \boldsymbol{C}\sum_{k=1}^K\int_{T_{n}}^t \left( \frac{1}{2}\parallel {e}_n^{l, k}(\cdot,s)\parallel^2 + \frac{1}{2}\parallel \mathfrak{e}_n^{l,k}(\cdot, s)\parallel^2 \right).
\end{multline}
Taking sum in \eqref{OSWR_est_parareal2} for $l=0, 1,\cdots, L$ we have 
\begin{multline}\label{OSWR_est_parareal3}
\sum_{l=0}^L  \sum_{k=1}^K  \left( \frac{1}{2}\parallel {e}_n^{l,k}(\cdot,t)\parallel^2 + \frac{1}{2}\parallel \mathfrak{e}_n^{l,k}(\cdot, t)\parallel^2 +\gamma\sum_{i=1}^2 \parallel {e}_{n,i}^{l,k}\parallel_{L^2(0,t;H^1(\Omega_i))}^2\right) +\frac{\gamma}{4p} \parallel\boldsymbol{\eta}_n^{L+1,0}(t)\parallel^2 \leq \\ \frac{K}{2} \sum_{l=0}^L \parallel \mathcal{E}_{n}^{l}\parallel^2  +\frac{\gamma}{4p} \parallel\boldsymbol{\eta}_n^{0,0}(t)\parallel^2 + \boldsymbol{C}\sum_{l=0}^L \sum_{k=1}^K\int_{T_{n}}^t \left( \frac{1}{2}\parallel {e}_n^{l, k}(\cdot,s)\parallel^2 + \frac{1}{2}\parallel \mathfrak{e}_n^{l,k}(\cdot, s)\parallel^2 \right).
\end{multline}
Define $\mathcal{G}^{L,K}(t):=\sum_{l=0}^L  \sum_{k=1}^K  \left( \frac{1}{2}\parallel {e}_n^{l,k}(\cdot,t)\parallel^2 + \frac{1}{2}\parallel \mathfrak{e}_n^{l,k}(\cdot, t)\parallel^2 \right)$, then from \eqref{OSWR_est_parareal3} we have 
\begin{equation}\label{OSWR_est_parareal4}
\mathcal{G}^{L,K}(t) \leq \frac{K}{2} \sum_{l=0}^L \parallel \mathcal{E}_{n}^{l}\parallel^2  +\frac{\gamma}{4p} \parallel\boldsymbol{\eta}_n^{0,0}(t)\parallel^2 + \boldsymbol{C} \int_{T_{n}}^t \mathcal{G}^{L,K}(s). 
\end{equation}
Using Gr\"{o}nwall's inequality in \eqref{OSWR_est_parareal4} and then evaluated at $t=T_{n+1}$ we obtain 
\begin{equation*}
\mathcal{G}^{L,K}(T_{n+1}) \leq  \frac{K\sigma}{2} \sum_{l=0}^L \parallel \mathcal{E}_{n}^{l}\parallel^2  +\frac{\gamma\sigma}{4p} \parallel\boldsymbol{\eta}_n^{0,0}(T_{n+1})\parallel^2.
\end{equation*}
 Hence, the estimate \eqref{PA_err0}.
\end{proof}

\begin{theorem}
For $\Delta T$ satisfying \eqref{coarse_time_step} the Parareal iterates $U_n^l\rightarrow u(\cdot, T_n)$ and $ V_n^l \rightarrow v(\cdot, T_n)$ in $L^2(\Omega)$ as $l\rightarrow\infty$.
\end{theorem}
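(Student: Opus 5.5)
Combining Lemma \ref{PA_lemma1} with Lemma \ref{PA_lemma2} gives a recursive inequality in the slice index $n$ for the accumulated errors $S_n^L:=\sum_{l=0}^{L}\parallel \mathcal{E}_n^l\parallel^2$. We then show by induction on $n$ that $S_n^L$ is bounded uniformly in $L$. A summable, nonnegative series must have terms tending to zero, so $\parallel\mathcal{E}_n^l\parallel\to 0$ as $l\to\infty$, which is exactly $U_n^l\to u(\cdot,T_n)$ and $V_n^l\to v(\cdot,T_n)$ in $L^2(\Omega)$.

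First, insert \eqref{PA_err0} into the last sum of \eqref{PA_err}. The coarse step obeys \eqref{coarse_time_step}, so Lemma \ref{Lipschitz_c_op_ct} applies and $C_{\texttt{Lip}}$ is finite. This yields
\begin{equation*}
\sum_{l=0}^{L}\parallel\mathcal{E}_{n+1}^{l+1}\parallel^2 \leq \left(8C_{\texttt{Lip}}^2 + K\sigma\right) S_n^{L+1} + \frac{\gamma\sigma}{2p}\parallel\boldsymbol{\eta}_n^{0,0}\parallel^2 .
\end{equation*}
The left-hand side equals $S_{n+1}^{L+1}-\parallel\mathcal{E}_{n+1}^0\parallel^2$. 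Hence
\[
S_{n+1}^{L+1}\le A\,S_n^{L+1}+B_n, \qquad A:=8C_{\texttt{Lip}}^2+K\sigma,\quad B_n:=\parallel\mathcal{E}_{n+1}^0\parallel^2+\frac{\gamma\sigma}{2p}\parallel\boldsymbol{\eta}_n^{0,0}\parallel^2 .
\]
The constant $B_n$ does not depend on $L$. It is finite because the initial coarse sweep $\mathtt{X}^0_n$ lies in $(L^2(\Omega))^2$ by Theorem \ref{existence_thm}, and the initial Robin data $\boldsymbol{g}^{0,0}_n$ lie in $L^2$ on the space–time interface.

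Second, start the induction at $n=0$. There $\mathcal{E}_0^l=0$ for all $l\ge1$, since $\mathtt{X}_0^{l+1}=[u_0,v_0]^\top$, and $\mathcal{E}_0^0=0$ as well. So $S_0^L=0$ for every $L$. Iterating the recursion gives
\[
S_n^{L}\le \sum_{j=0}^{n-1}A^{\,n-1-j}B_j,
\]
uniformly in $L$ and for every $n\le N$. Because $N$ is finite, the geometric factors $A^{n}$ are harmless, even if $A>1$. The partial sums $S_n^L$ are nondecreasing in $L$ and bounded, so $\sum_{l\ge0}\parallel\mathcal{E}_n^l\parallel^2<\infty$ and therefore $\parallel\mathcal{E}_n^l\parallel\to0$.

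The main obstacle is the index bookkeeping in Lemma \ref{PA_lemma1}: its right-hand side involves $S_n^{L+1}$, while the left-hand side is shifted by one iteration. I will handle this by working with $S_n^{L+1}$ on both sides, as done above. I also need to check that the interface error $\boldsymbol{\eta}_n^{0,0}$ from Lemma \ref{PA_lemma2} enters only once, which is what makes it $L$-independent; this relies on the telescoping $\boldsymbol{\eta}_n^{l+1,0}=\boldsymbol{\eta}_n^{l,K}$. Finally, the $n=0$ slice needs a brief justification. There the fine solver starts from the exact data, so the only error source is $\boldsymbol{\eta}_0^{0,0}$, which is already included in $B_0$.
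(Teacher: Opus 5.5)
Your proposal is correct and follows the paper's proof essentially step for step. You insert Lemma \ref{PA_lemma2} into Lemma \ref{PA_lemma1} and rewrite the left side as $S_{n+1}^{L+1}-\|\mathcal{E}_{n+1}^0\|^2$, which gives $S_{n+1}^{L+1}\le C_\sigma S_n^{L+1}+\mathfrak{R}_n$; your $A$ and $B_n$ are exactly the paper's $C_\sigma$ and $\mathfrak{R}_n$. You then iterate in $n$ from $\mathcal{E}_0^l=0$ and use the $L$-independent bound on the partial sums to conclude $\|\mathcal{E}_n^l\|\to 0$, just as the paper does.
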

\begin{proof}~
Using Lemma \ref{PA_lemma2} in Lemma \ref{PA_lemma1} we have
\begin{equation}\label{PA_err_conv}
\sum_{l=0}^{L} \parallel \mathcal{E}_{n+1}^{l+1}\parallel^2 \leq 8C_{\texttt{Lip}}^2 \sum_{l=0}^{L+1}  \parallel \mathcal{E}_{n}^{l}\parallel^2  + K\sigma \sum_{l=0}^L \parallel \mathcal{E}_{n}^{l}\parallel^2  +\frac{\gamma\sigma}{2p} \parallel\boldsymbol{\eta}_n^{0,0}\parallel^2.
\end{equation} 
Rewriting the above inequality as follows
\begin{equation}\label{PA_err_conv2}
\sum_{l=0}^{L+1} \parallel \mathcal{E}_{n+1}^{l}\parallel^2 \leq C_{\sigma} \sum_{l=0}^{L+1}  \parallel \mathcal{E}_{n}^{l}\parallel^2 + \mathfrak{R}_n, \text{for}\; n=0,1,\cdots, N-1,
\end{equation}
where $C_{\sigma}=8C_{\texttt{Lip}}^2 + K\sigma$ and $\mathfrak{R}_n = \parallel \mathcal{E}_{n+1}^{0}\parallel^2+\frac{\gamma\sigma}{2p} \parallel\boldsymbol{\eta}_n^{0,0}\parallel^2$.
After applying induction on \eqref{PA_err_conv2}, we derive
\begin{equation}\label{PA_err_conv3}
\sum_{l=0}^{L+1} \parallel \mathcal{E}_{n+1}^{l}\parallel^2 \leq C_{\sigma}^{n+1} \sum_{l=0}^{L+1}  \parallel \mathcal{E}_{0}^{l}\parallel^2 + \sum_{j=0}^n C_{\sigma}^{j} \mathfrak{R}_{n-j}, \text{for}\; n=0,1,\cdots, N-1.
\end{equation}
As $\mathcal{E}_{0}^{l}=0$ for every $l\geq 0$, from \eqref{PA_err_conv3} we have 
\begin{equation}\label{PA_err_conv4}
\sum_{l=0}^{L+1} \parallel \mathcal{E}_{n+1}^{l}\parallel^2 \leq \sum_{j=0}^n C_{\sigma}^{j} \mathfrak{R}_{n-j}, \text{for}\; n=0,1,\cdots, N-1.
\end{equation}
Since the RHS of \eqref{PA_err_conv4} remains independent of the parareal iteration $L$, the partial sum $\sum_{l=0}^{L} \parallel \mathcal{E}_{n}^{l}\parallel^2$ is uniformly bounded. Consequently, $\mathcal{E}_{n}^{l}$ tends toward zero in the $L^2(\Omega)$-norm as $l\rightarrow\infty$. Hence, the theorem.
\end{proof}
\begin{remark}
     Here, we just want to comment on the overall cost of the Parareal incomplete OSWR method. The computational cost to reach a desired accuracy is the number of inner OSWR iterations times the number of outer Parareal iterations. For cost analysis in the linear case, see \cite{bui2022coupling}.
\end{remark}

\section{Numerical Illustration}\label{Section5}
In this section, we present the numerical experiments conducted using the Parareal-incomplete OSWR method for the RM model \eqref{RM}. We discretize the RM model using centered finite differences in space and the backward Euler method in time, employing the linearization described in \eqref{discrete_RM}. The problem parameters $a$, $b$, $c_1$, $c_2$, $c_3$, and $\gamma$ are set to $a = 0.13$, $b = 0.013$, $c_1 = 0.26$, $c_2 = 0.1$, $c_3 = 1.0$, $\gamma = 0.001$, unless otherwise specified. We consider the following smooth initial data in our numerical experiments unless otherwise specified. 
\begin{eqnarray}
 \text{ For 1D computations, } u_0 &=e^{-4x^2}, v_0=0\; \text{ where }\; \Omega=(0, 1), \text{and}  \\
  \text{ For 2D computations, } u_0 &=e^{-4(x^2+y^2)}, v_0=0\; \text{ where }\; \Omega=(0, 1)^2, \label{initial-2d}
\end{eqnarray}
\subsection{Numerical results of OSWR method and varying Robin parameters} 
Here, we analyze the numerical convergence of the OSWR method for different choices of the Robin parameter $p$. The iteration process starts with a zero initial guess at the midpoint interface and continues until the error  $\parallel u-u^k\parallel_{L^{\infty}(0,T;L^2(\Omega))}$ and $\parallel v-v^k\parallel_{L^{\infty}(0,T;L^2(\Omega))}$ reaches a tolerance of $10^{-6}$. Here, 
$u, v$ represents the discrete sequential solution using the linearize scheme \eqref{discrete_RM} and 
$u^k, v^k$ represents the discrete OSWR solution at the 
$k$-th iteration.
\subsubsection{Experiments in 1D}
Figure~\ref{oswr_diff_T} illustrates the convergence trajectories of the OSWR method for variables $u$ and $v$, considering different time windows and varying mesh parameters.
\begin{figure}[h!]
    \centering
    \subfloat{{\includegraphics[height=4cm,width=7cm]{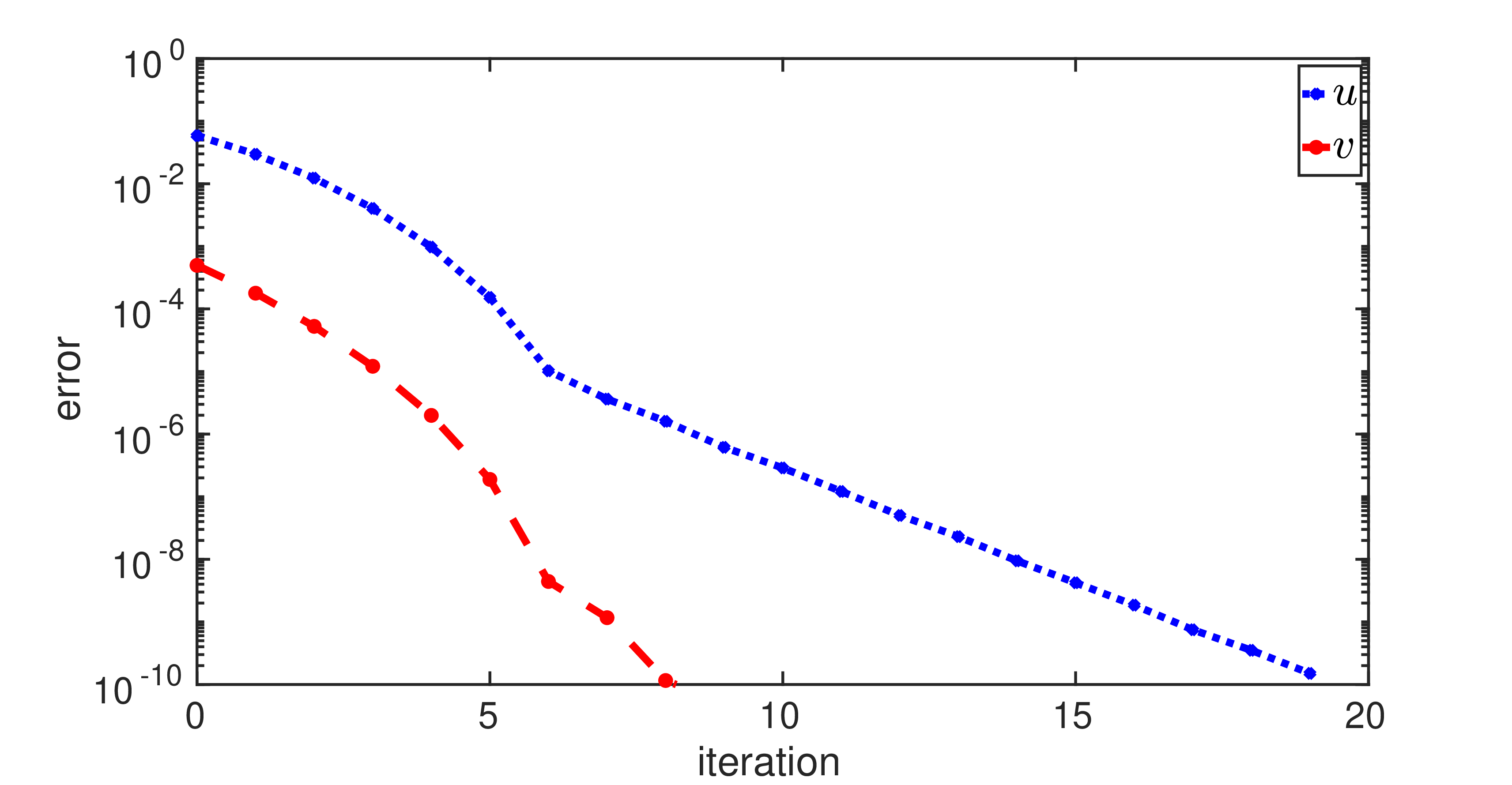} }}
    \subfloat{{\includegraphics[height=4cm,width=7cm]{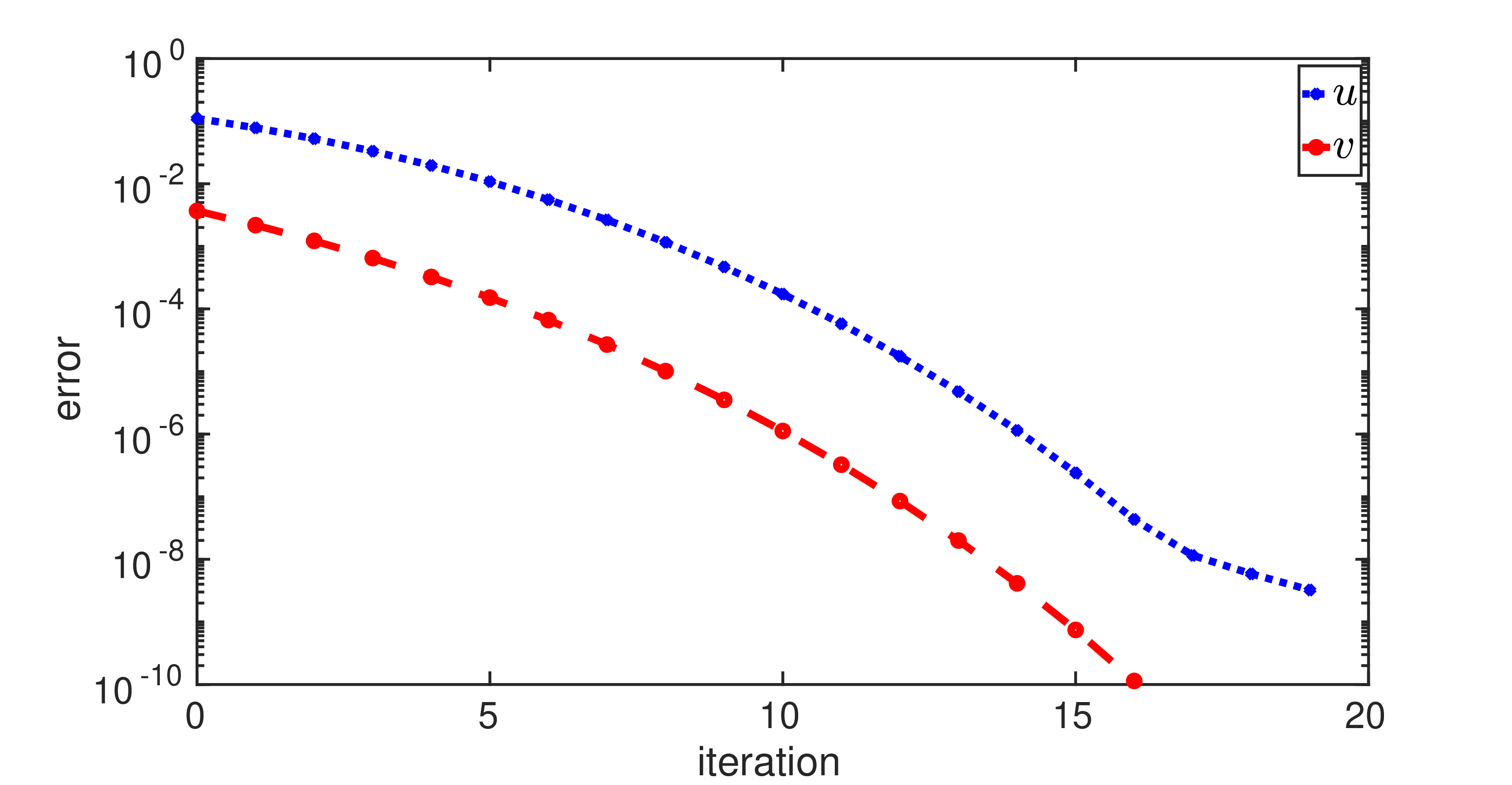} }}
    \caption{On the left: convergence of OSWR with $h=1/128, \Delta t=0.01, p_{\texttt{num}}=0.18$ and $T=1$; On the right: convergence of OSWR with $h=1/256, \Delta t=0.005, p_{\texttt{num}}=0.16$ and $T=4$.}
    \label{oswr_diff_T}
\end{figure}

We observe that the recovery variable $v$ converges faster compared to the excitation variable $u$ in both cases. Additionally, it's evident that the convergence trends extend as the time frame broadens. Subsequently, we compare the convergence behavior of the excitation variable $u$ for $p=p_a$ and $p=p_{\texttt{num}}$. As depicted in Figure \ref{oswr_diff_p}, we observe that for $p = p_a$, the error diminishes faster compared to $p = p_{\texttt{num}}$ up to a certain number of iterations in the small time window. However, after that point, the error contracts faster for $p = p_{\texttt{num}}$. In the long-time scenario, error contracts a bit faster for $p_a$. The error curves for $p_a$ and $p_{\texttt{num}}$ closely align.
\begin{figure}[h!]
    \centering
    \subfloat{{\includegraphics[height=4cm,width=7cm]{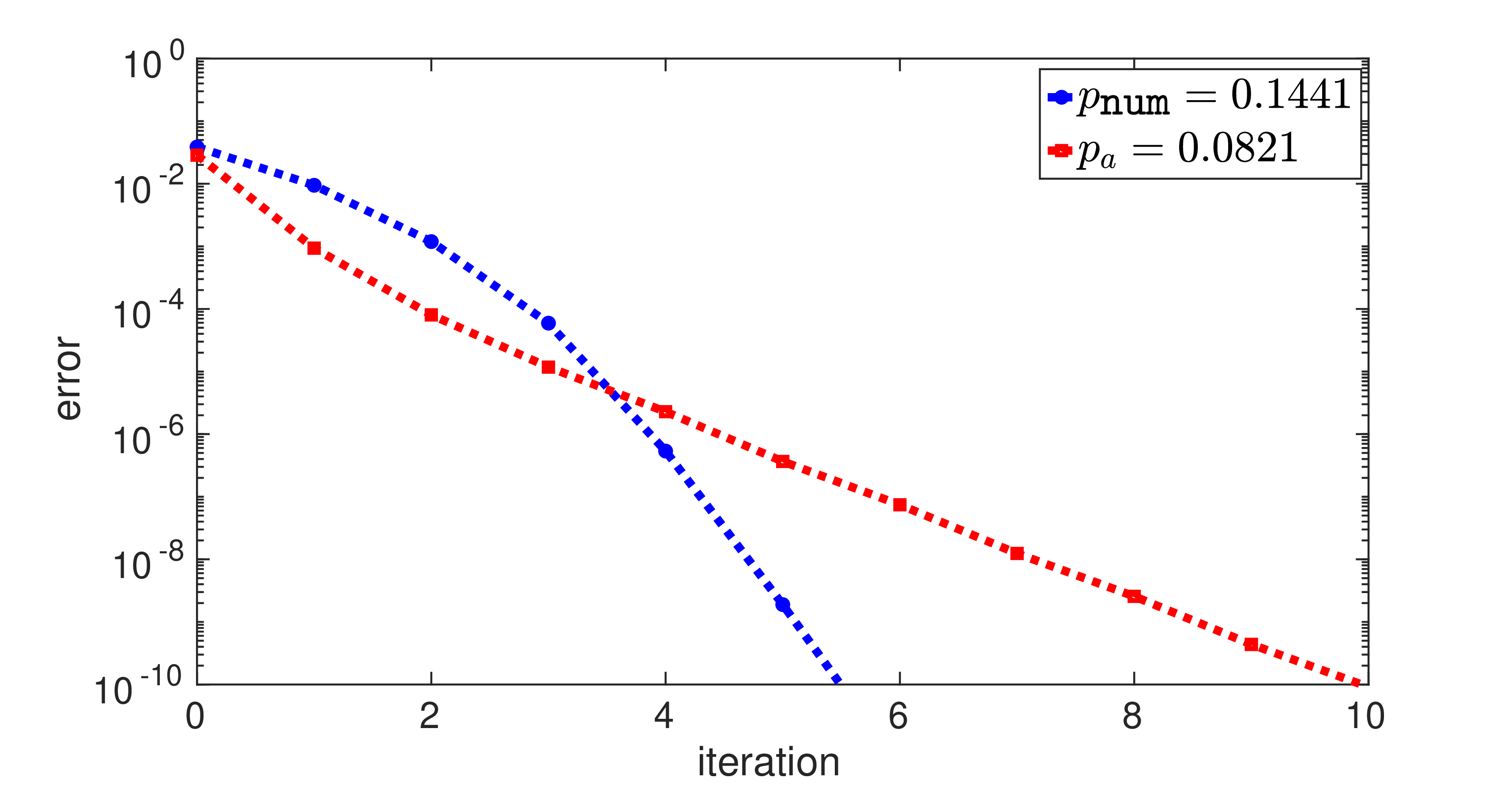} }}
    \subfloat{{\includegraphics[height=4cm,width=7cm]{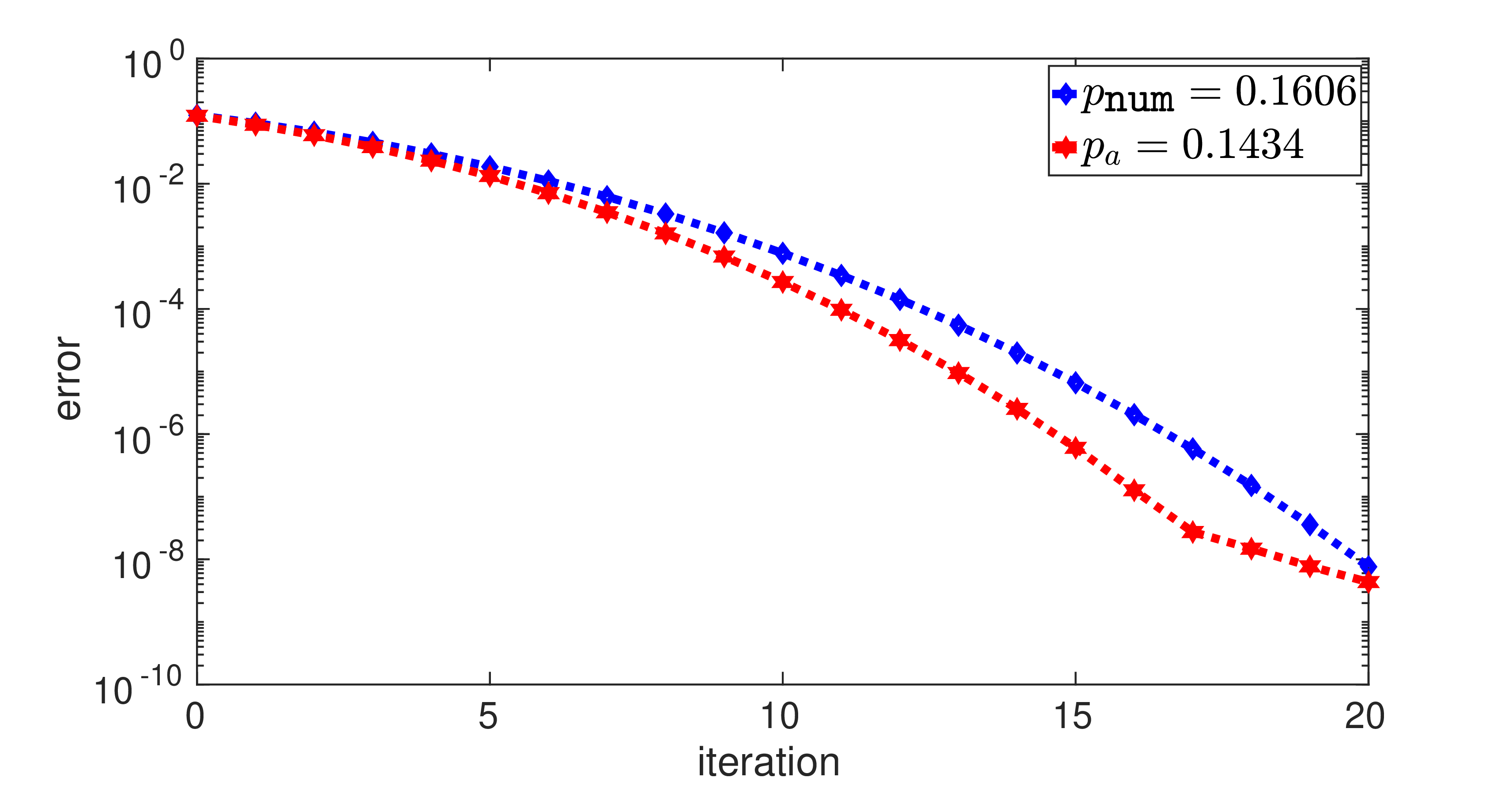} }}
    \caption{On the left: convergence of OSWR with $h=1/256, \Delta t=0.05$ and $T=0.5$; On the right: convergence of OSWR with $h=1/256, \Delta t=0.005$ and $T=5$.}
    \label{oswr_diff_p}
\end{figure}

The importance of finding the best choice of the Robin parameter can be observed from Figure \ref{num_ana_p},  which showcases the error after six OSWR iterations for the variable $u$. In the left plot, it is apparent that the most effective values of $p$ are situated within the interval $(0, 0.15)$, aligning with both our analytical and numerical determinations of $p$ depicted in the right plot.
\begin{figure}[h!]
    \centering
    \subfloat{{\includegraphics[height=4cm,width=7cm]{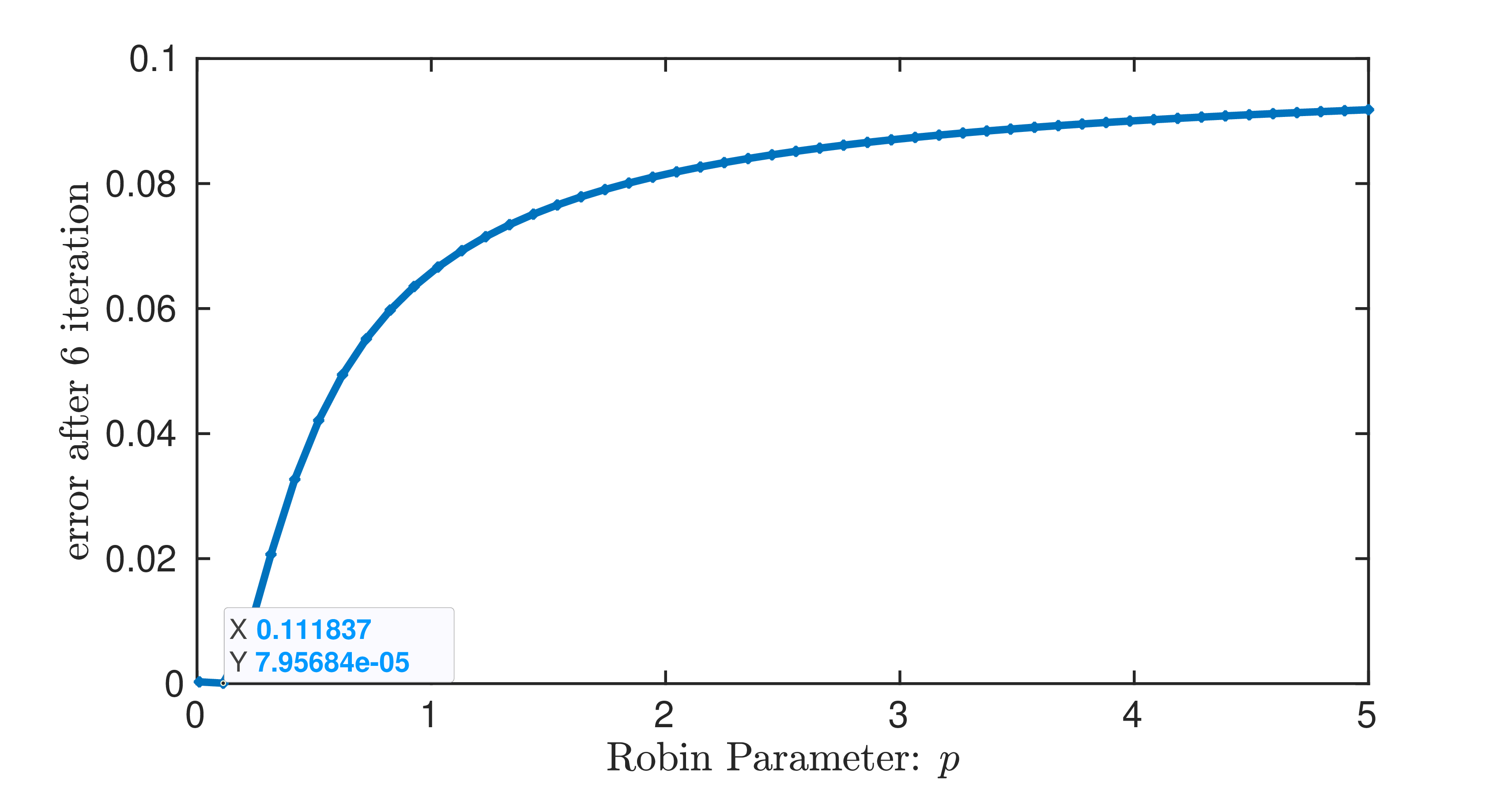} }}
    \subfloat{{\includegraphics[height=4cm,width=7cm]{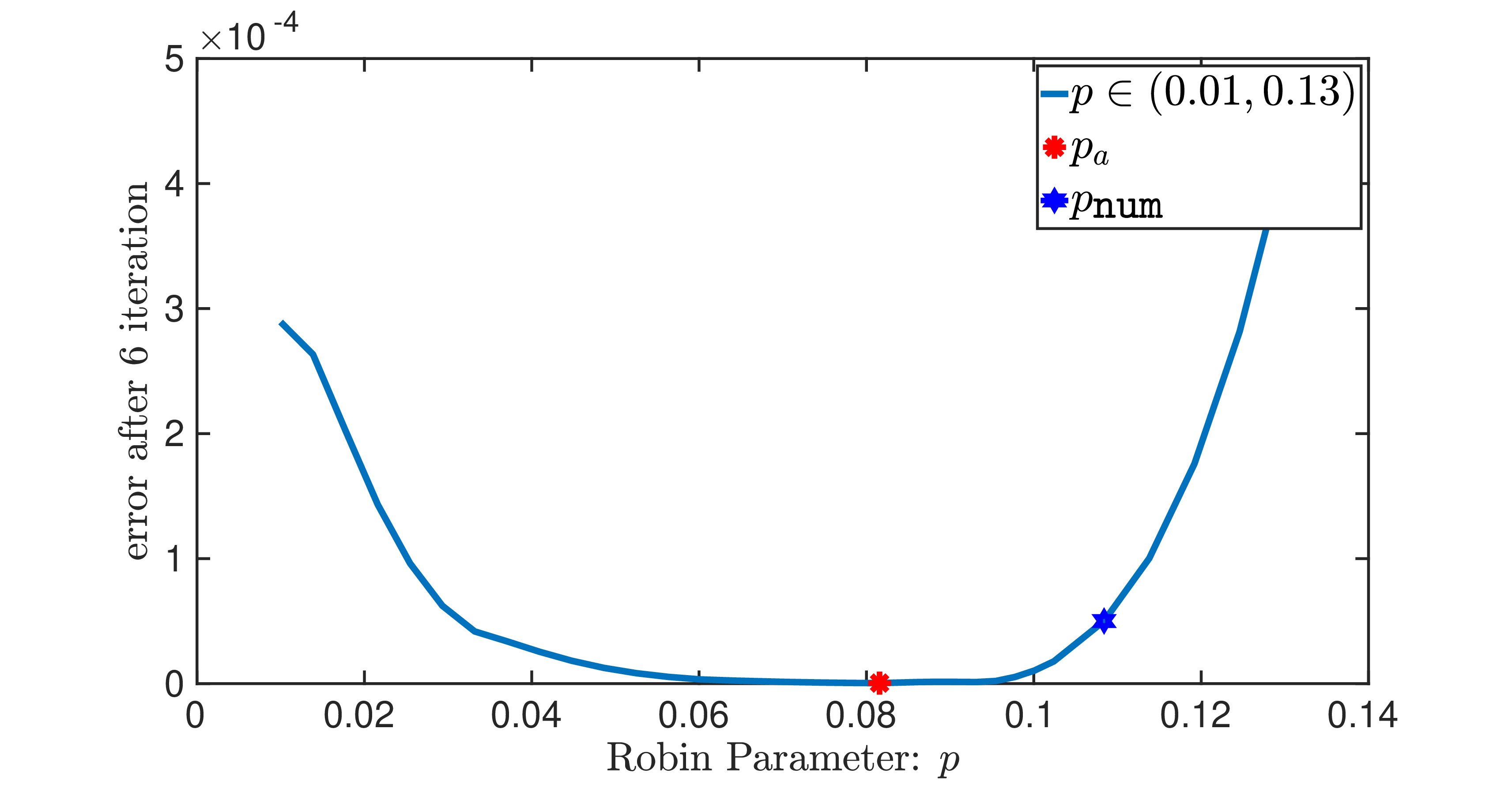} }}
    \caption{Error of OSWR after $6$ iteration with $h=1/256, \Delta t=0.05$ and $T=2$. On the left: $p\in(0.01,5)$; On the right:  $p\in(0.01,0.13)$.}
    \label{num_ana_p}
\end{figure}

\subsubsection{Experiments in 2D}
To conduct numerical experiments in two dimensions, we utilize a uniform mesh size of $h=1/64$. In the left plot of Figure \ref{oswr_p_2d}, we have illustrated the error curves for the variables $u$ and $v$. Upon examining these curves, it is evident that the convergence behavior in this 2D context closely mirrors what we observed in the one-dimensional case. This suggests a consistent performance and reliability of our method across different dimensional settings.
\begin{figure}[h!]
    \centering
    \subfloat{{\includegraphics[height=4cm,width=7cm]{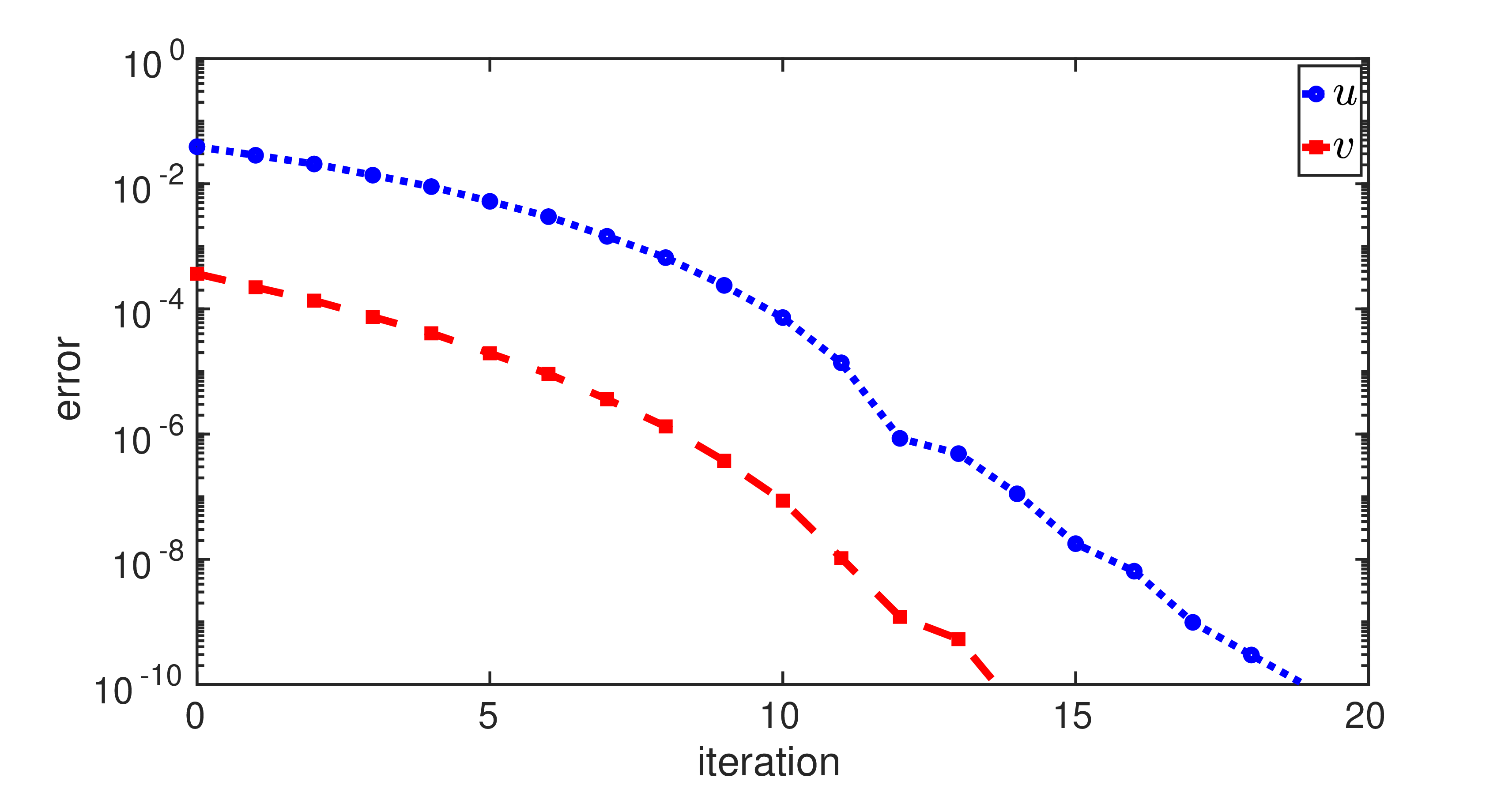} }}
    \subfloat{{\includegraphics[height=4cm,width=7cm]{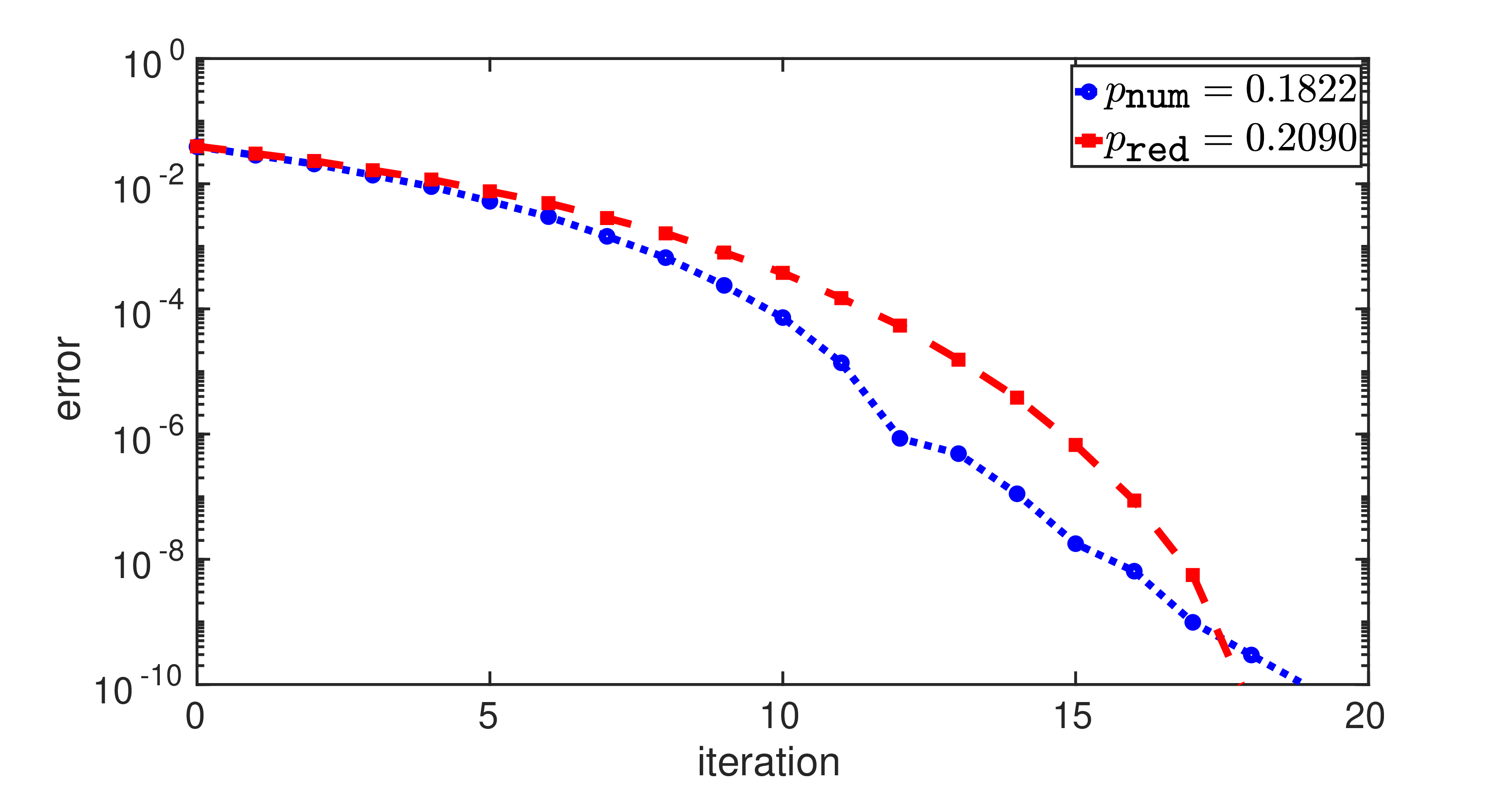} }}
    \caption{ On the left: convergence of OSWR with $h=1/64, \Delta t=0.05$ and $T=1$; On the right: convergence of OSWR with $h=1/64, \Delta t=0.05$ and $T=1$ for different $p$.}
    \label{oswr_p_2d}
\end{figure}
In the right plot of Figure \ref{oswr_p_2d}, we present the error curves for the variable $u$ corresponding to the parameters $p=p_{\texttt{num}}$ and $p=p_{\texttt{red}}$. From these curves, it is apparent that the Robin parameter $p_{\texttt{red}}$, which was derived from reduced dynamics, performs exceptionally well. This effectiveness highlights the robustness and efficiency of $p_{\texttt{red}}$ in minimizing the error, further validating the approach used to determine this parameter.

\subsection{Numerical Investigation of the Parareal-Incomplete OSWR Method} 
In this subsection, we present numerical results obtained using the Parareal-incomplete OSWR method (Algorithm \ref{alg:parareal}). We utilize equation \eqref{discrete_RM} with a coarse time step $\Delta T$ for the coarse solver. For the fine solver, we implement the OSWR method with a finer time step $\Delta t$. Specifically, we perform $K$ iterations of the OSWR method (i.e., before achieving complete OSWR convergence) and evaluate the performance across different values of $K$. For the initial Robin data, we set ${g}_{i,n}^{0,0} = {\mathtt{B}_i {U}_n^{0}}_{|\Gamma_T}$ for $i=1, 2$. The iteration process continues until the maximum error, defined as $\max\limits_{0\leq n\leq N}\parallel\mathcal{E}_{n}^l\parallel$, falls below the tolerance threshold of $10^{-6}$. The reference solution used for comparison is a discrete sequential solution using the linearized scheme \eqref{discrete_RM}.

\subsubsection{Experiments in 1D with two subdomains}
First, we decompose the domain $\Omega$ as $\Omega_1=(0, 1/2)$ and $\Omega_2=(1/2, 1)$. 
\begin{figure}[h!]
    \centering
    \subfloat{{\includegraphics[height=4cm,width=7cm]{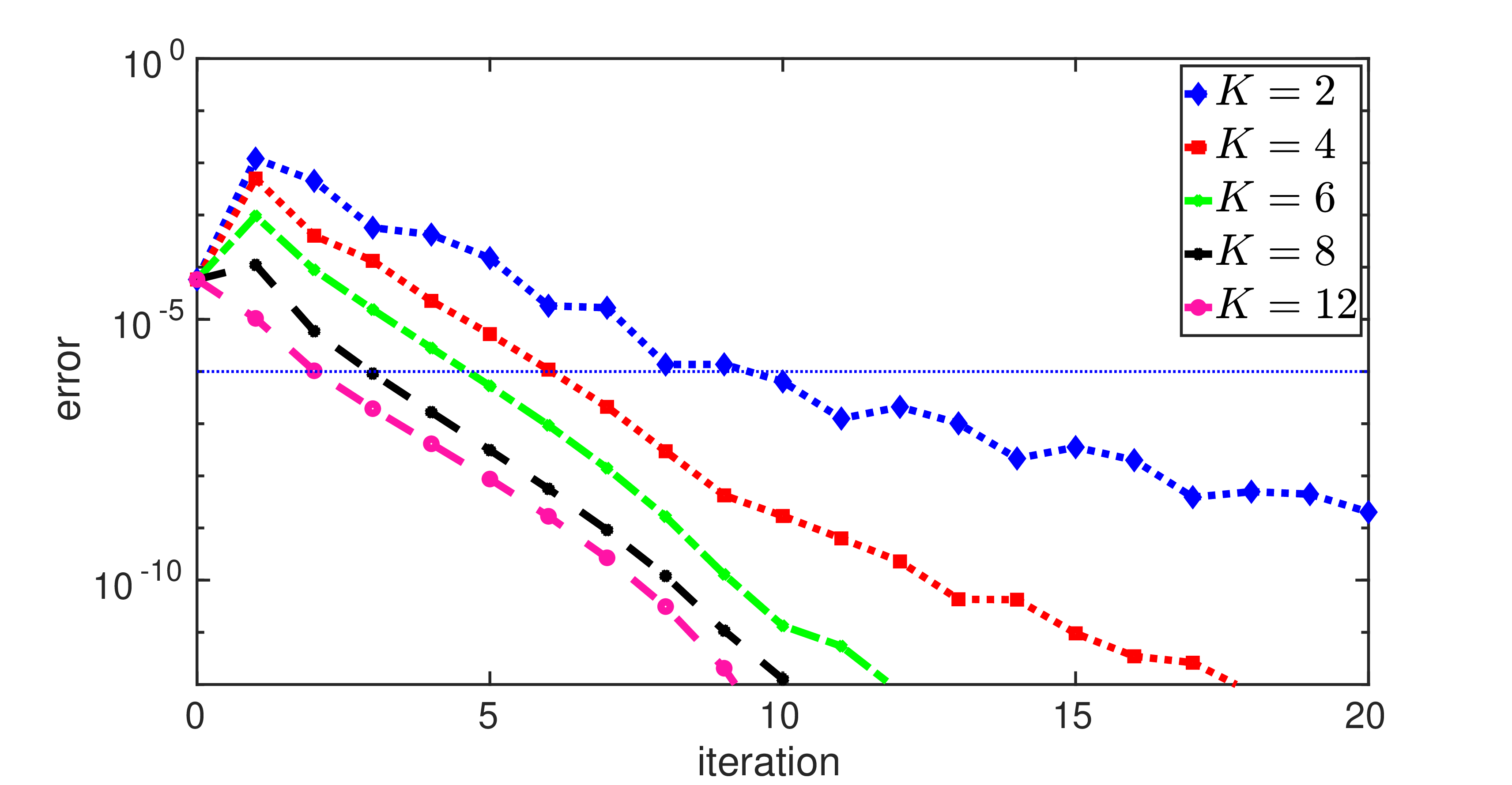} }}
    \subfloat{{\includegraphics[height=4cm,width=7cm]{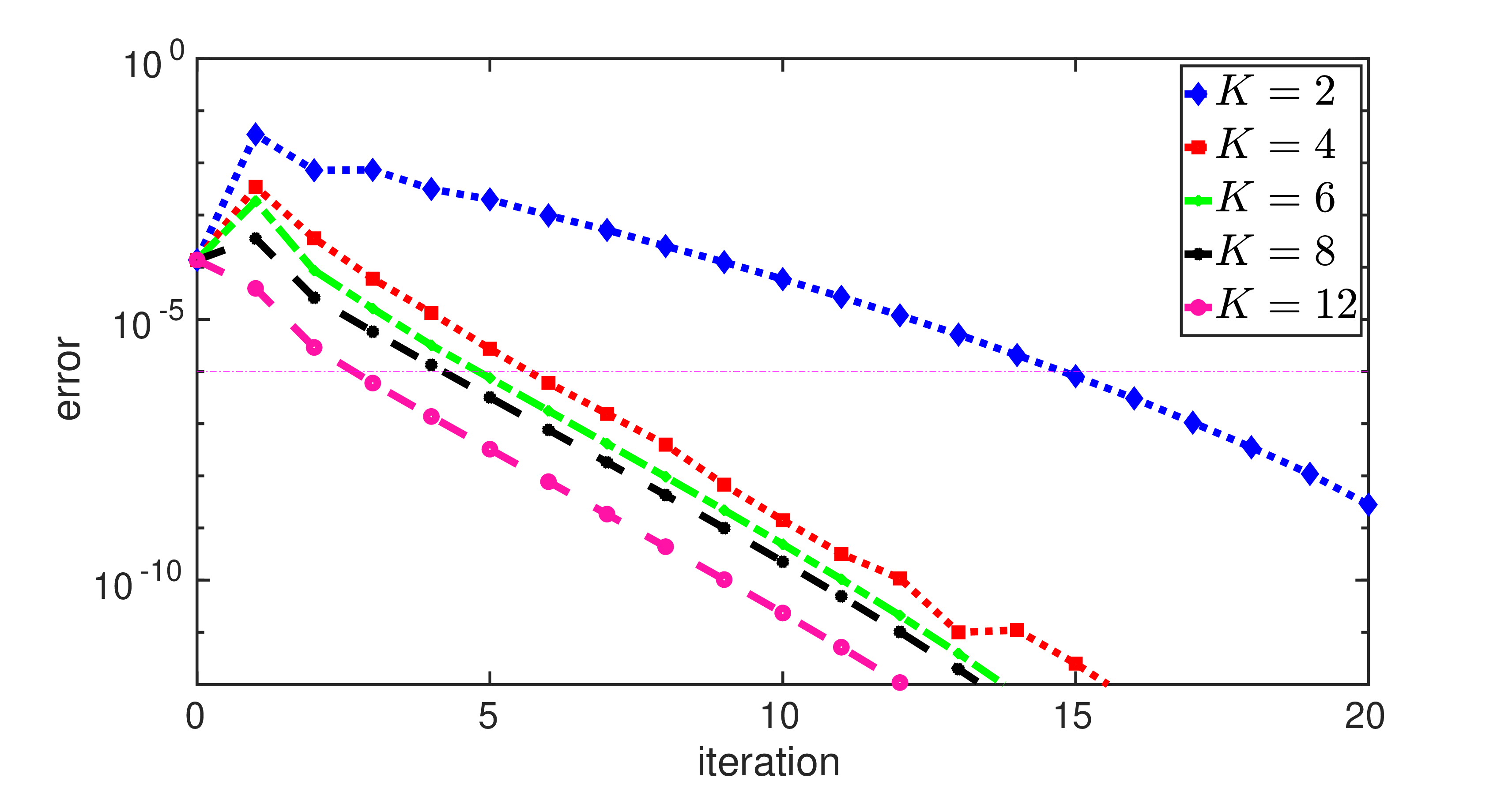} }}
    \caption{ Convergence of Parareal-OSWR for different inner iterations $K$. On the left: $T=1, N=10$; On the right: $T=2, N=20$.}
    \label{pa_oswr}
\end{figure}
In the left panel of Figure \ref{pa_oswr}, we present the error curves for the Parareal-OSWR method under various inner iteration counts $K$. The simulation parameters are set as $\Delta t=0.0067, \Delta T=0.1, h=1/256$, and the number of time slices $N=10$. Our results illustrate that increasing the number of inner iterations $K$ leads to a reduction in the number of required outer iterations. However, since inner iterations are computationally expensive, it is advisable to select $K$ from the set $\{4, 5, 6 \}$. This choice strikes a balance between computational cost and the convergence rate of the outer iterations. In the right panel of Figure \ref{pa_oswr}, we extend our analysis to a longer final time $T=2$, using $N=20$ time slices while keeping the other parameters consistent with the $T=1$ case. This comparison further highlights the performance of the Parareal-OSWR method under different temporal discretization and iteration configurations.

\begin{figure}[h!]
    \centering
    \subfloat{{\includegraphics[height=4cm,width=7cm]{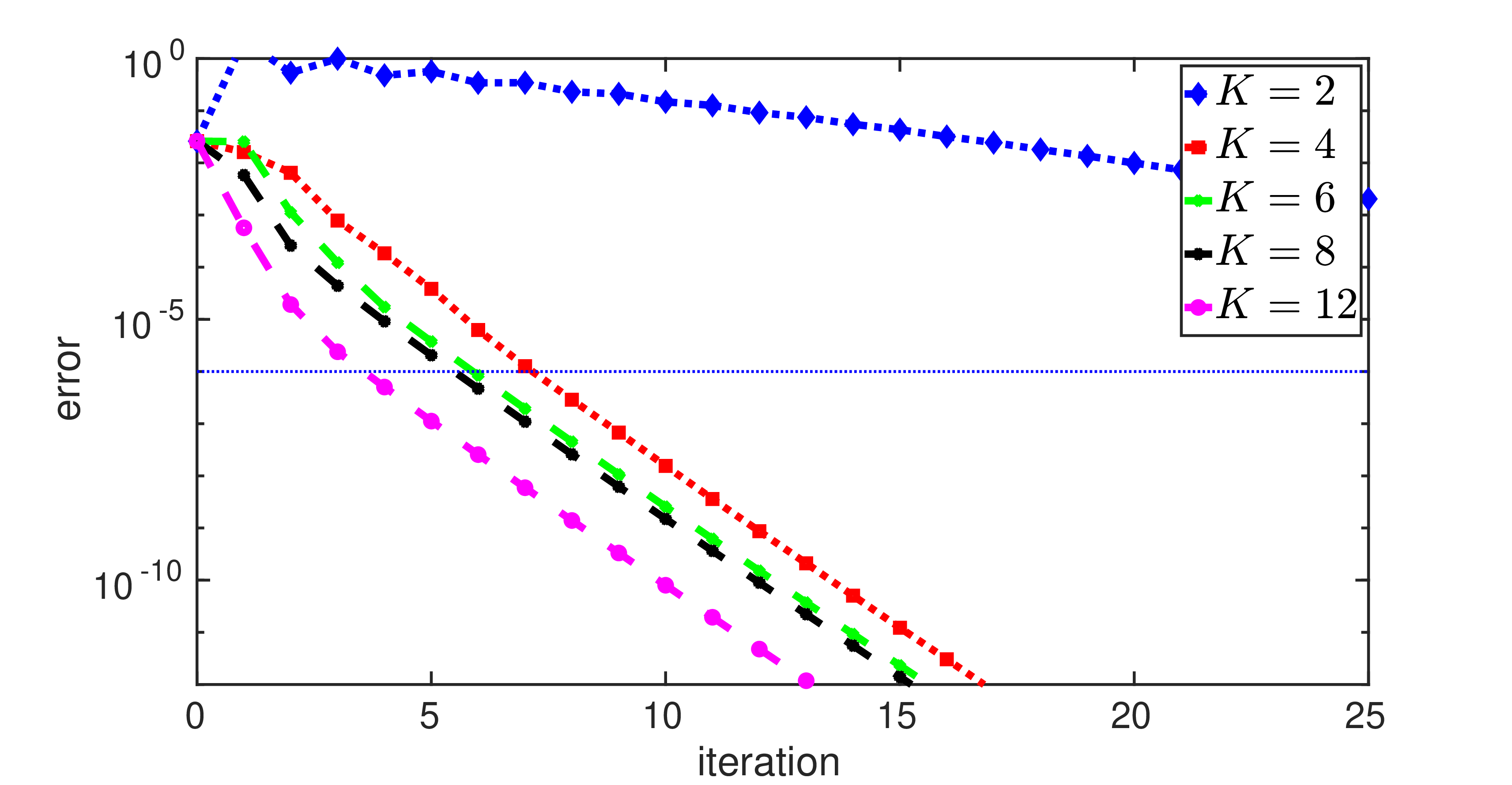} }}
    \subfloat{{\includegraphics[height=4cm,width=7cm]{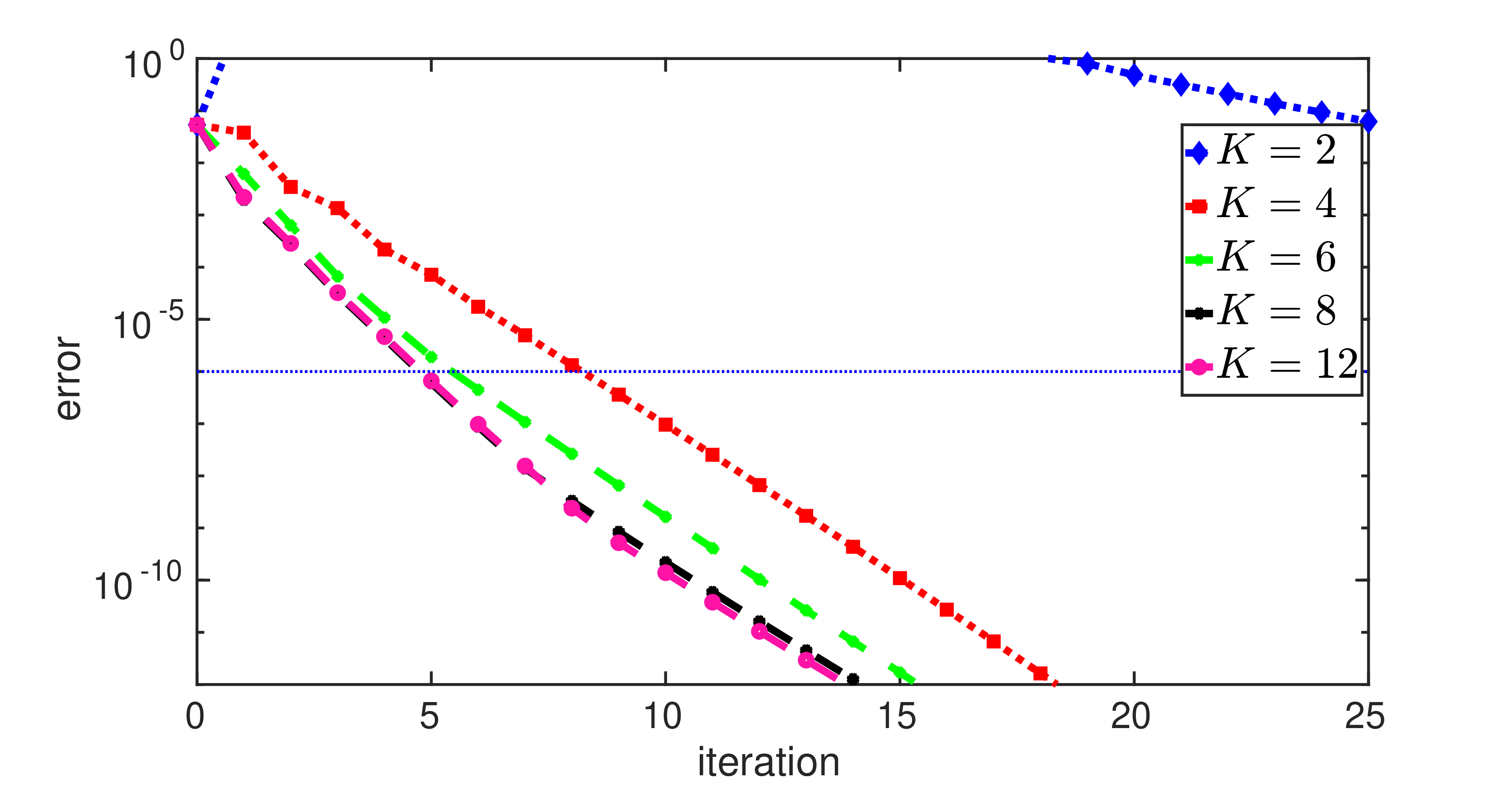} }}
    \caption{ Convergence of Parareal-OSWR for different $K$ with $h=1/256$. On the left: $T=100, \Delta T=0.25$; On the right: $T=100, \Delta T=1$.}
    \label{pa_oswr_diff_K_largeT}
\end{figure}
Next, in Figure \ref{pa_oswr_diff_K_largeT}, we plot the error curves of the Parareal-OSWR method for different values of inner iteration $K$ when applied to a large final time $T=100$. Our observations reveal that, except for $K=2$, which exhibits slow convergence, it is possible to use a larger time step $\Delta T$. Utilizing a larger $\Delta T$ results in a reduced number of time slices $N$, thereby decreasing the computational resources required to achieve the same level of accuracy. This demonstrates the efficiency of the method, especially for long-time simulations, as it allows for fewer processor cycles while maintaining convergence performance.

\subsubsection{Experiments in 1D with two subdomains for various initial data}
Now, we test the Parareal-OSWR with the following diverse initial data. Consider
\begin{equation}\label{diff_initial_data}
\begin{aligned}
u_0=0.1e^{-30x^2}, v_0=0\; \text{with smaller $L^2$ norm in}\; \Omega=(0, 1),\\
u_0=0.1\sin(4\pi x), v_0=0\; \text{having low frequency mode in}\; \Omega=(0, 1).\\
u_0=0.1\sin(30\pi x), v_0=0\; \text{having high frequency mode in}\; \Omega=(0, 1).
\end{aligned}
\end{equation} 
For the specified initial data collection, we generate error curves depicted in Figure \ref{pa_oswr_small_data} by systematically altering the inner iteration count $K$, while taking $h=1/256, \Delta T=0.1$ and $T=4$. The plots illustrate the method's performance across different inner iteration settings. Surprisingly, the method demonstrates robustness even with a minimal inner iteration count of $K=2$, as evidenced by the error curves. This suggests the efficacy of the Parareal-OSWR method in handling diverse initial data sets while maintaining convergence, even under relatively modest computational settings.
\begin{figure}[h!]
    \centering
    \subfloat{{\includegraphics[height=4cm,width=5cm]{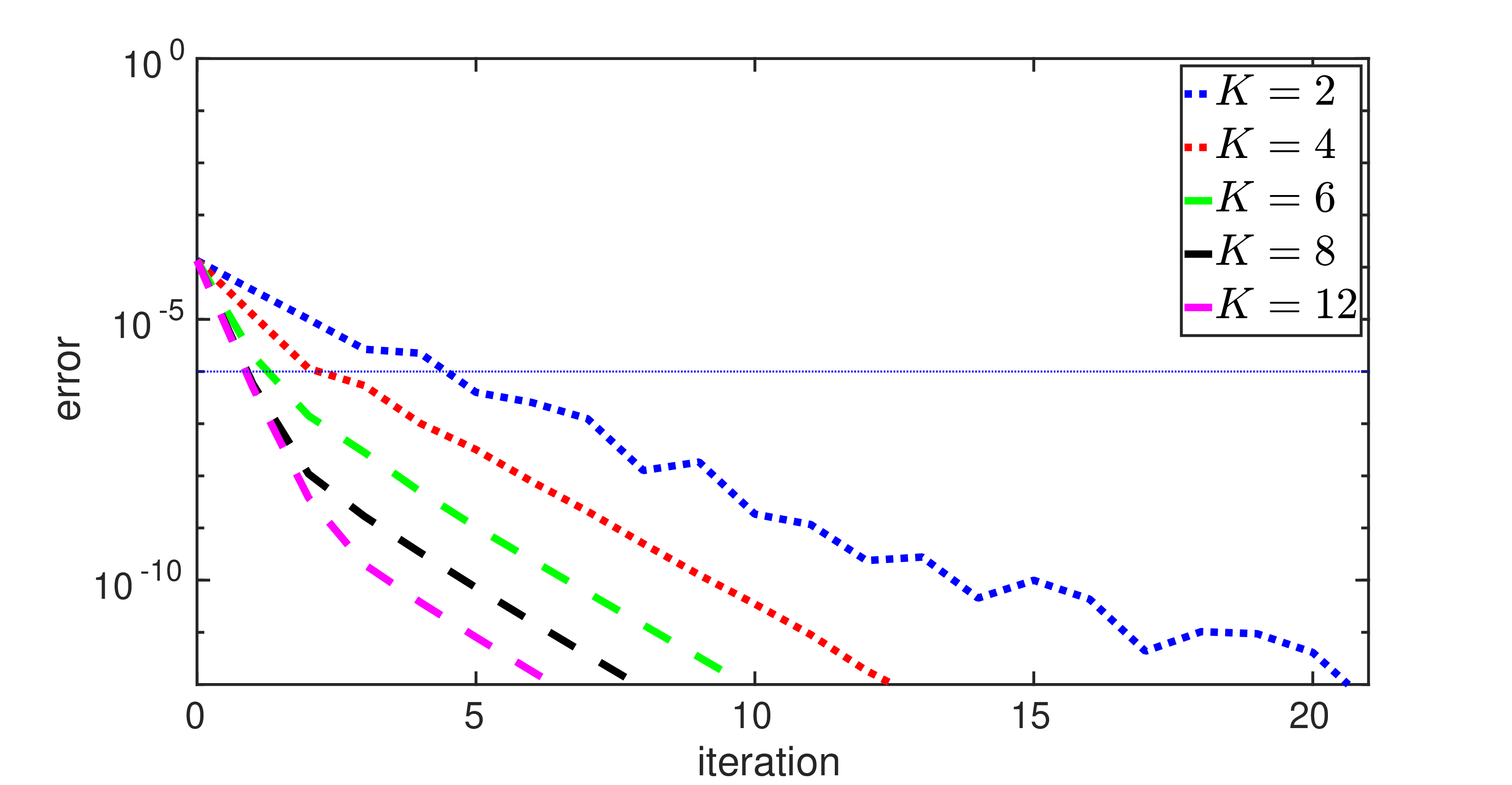} }}
    \subfloat{{\includegraphics[height=4cm,width=5cm]{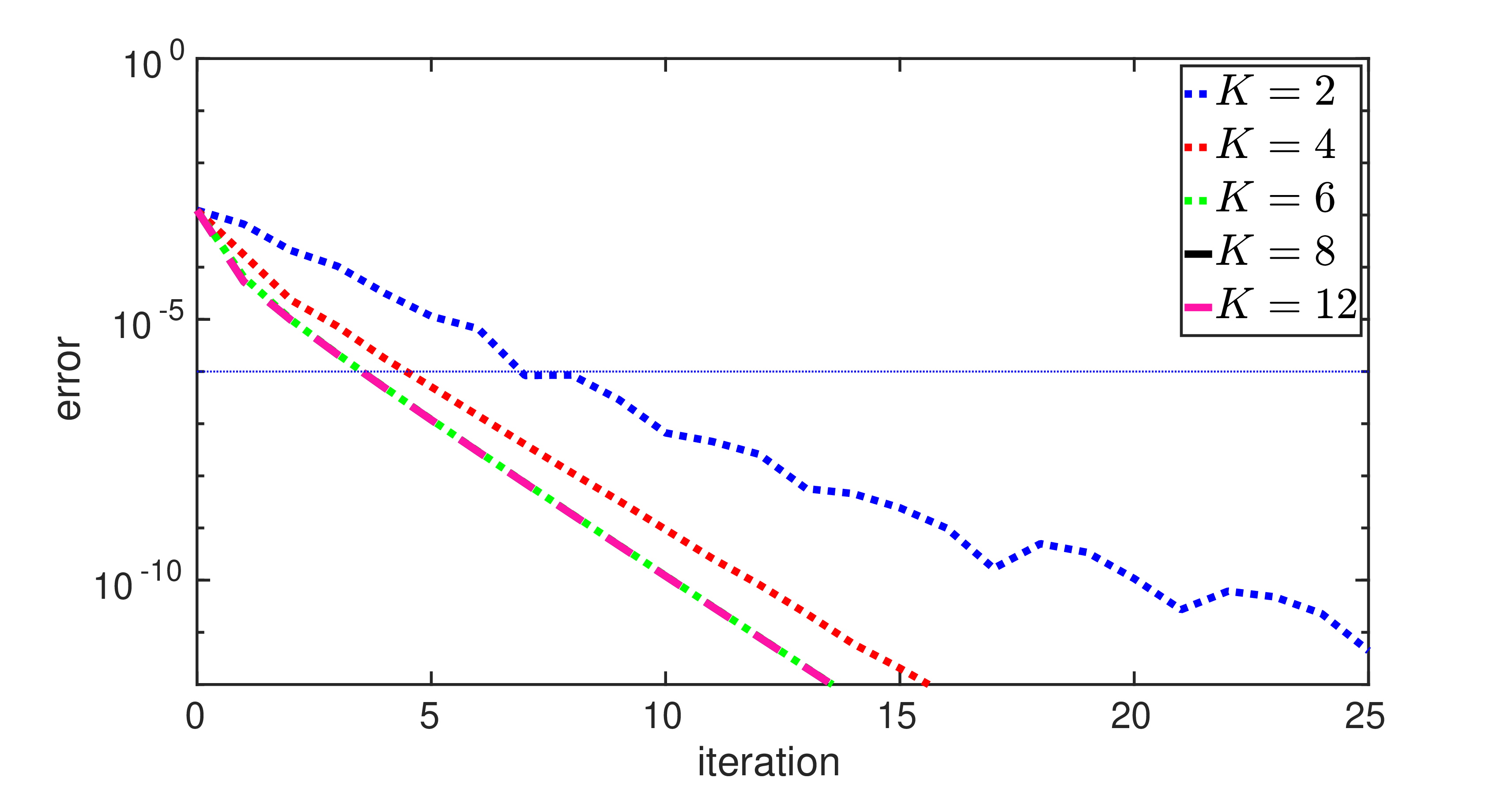} }}
    \subfloat{{\includegraphics[height=4cm,width=5cm]{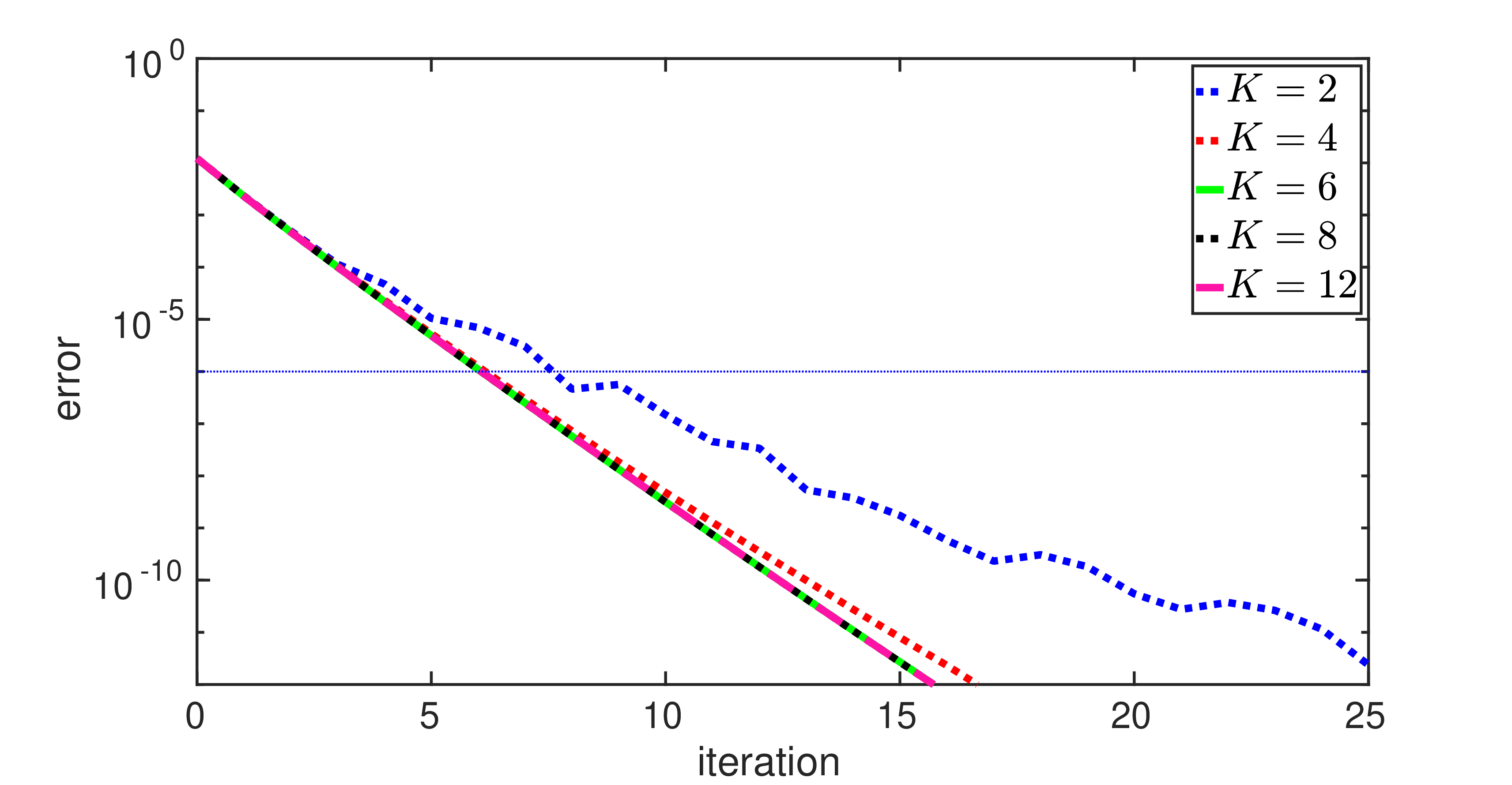} }}
    \caption{ Convergence of Parareal-OSWR for different initial data. On the left: \eqref{diff_initial_data}$_1$; On the middle:  \eqref{diff_initial_data}$_2$; On the right:  \eqref{diff_initial_data}$_3$.}
    \label{pa_oswr_small_data}
\end{figure}

\subsubsection{Experiments in 1D for more than two subdomain}
In this subsection, we explore numerical outcomes pertaining to spatial domain decompositions involving more than two subdomains. Let $N_s$ represent the number of spatial subdomains considered. We partition the spatial domain $\Omega=(0, 10)$ into $N_s$ subdomains of equal length. In the left panel of Figure \ref{pa_oswr_multi_sub}, we present error curves generated by the Parareal-OSWR method, with $N_s=64, N=10$, and varying inner OSWR iterations $K$. The plots illustrate that the convergence remains robust and mirrors the behavior observed in the case of two subdomain decompositions.
\begin{figure}[h!]
    \centering
    \subfloat{{\includegraphics[height=4cm,width=7cm]{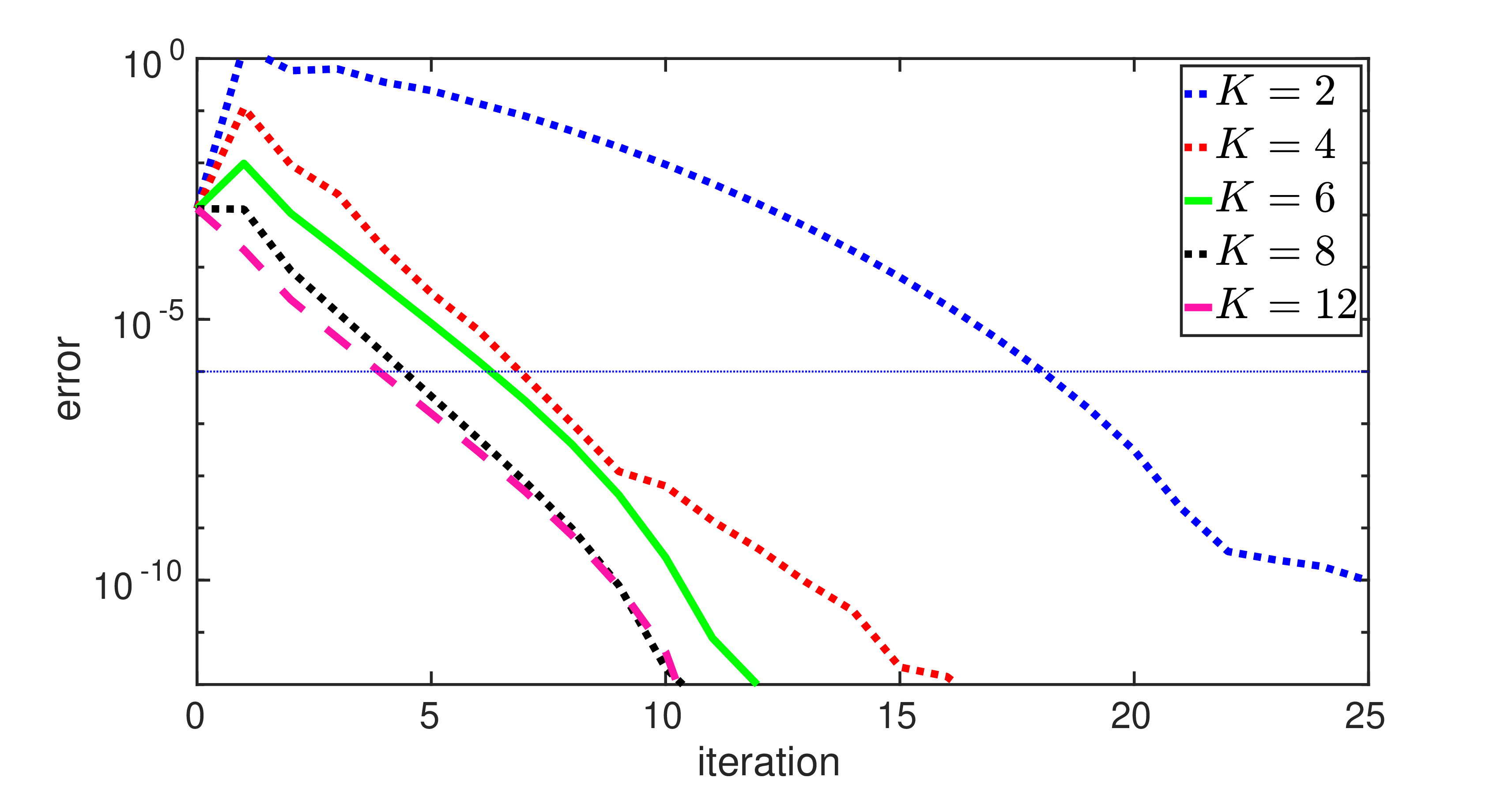} }}
    \subfloat{{\includegraphics[height=4cm,width=7cm]{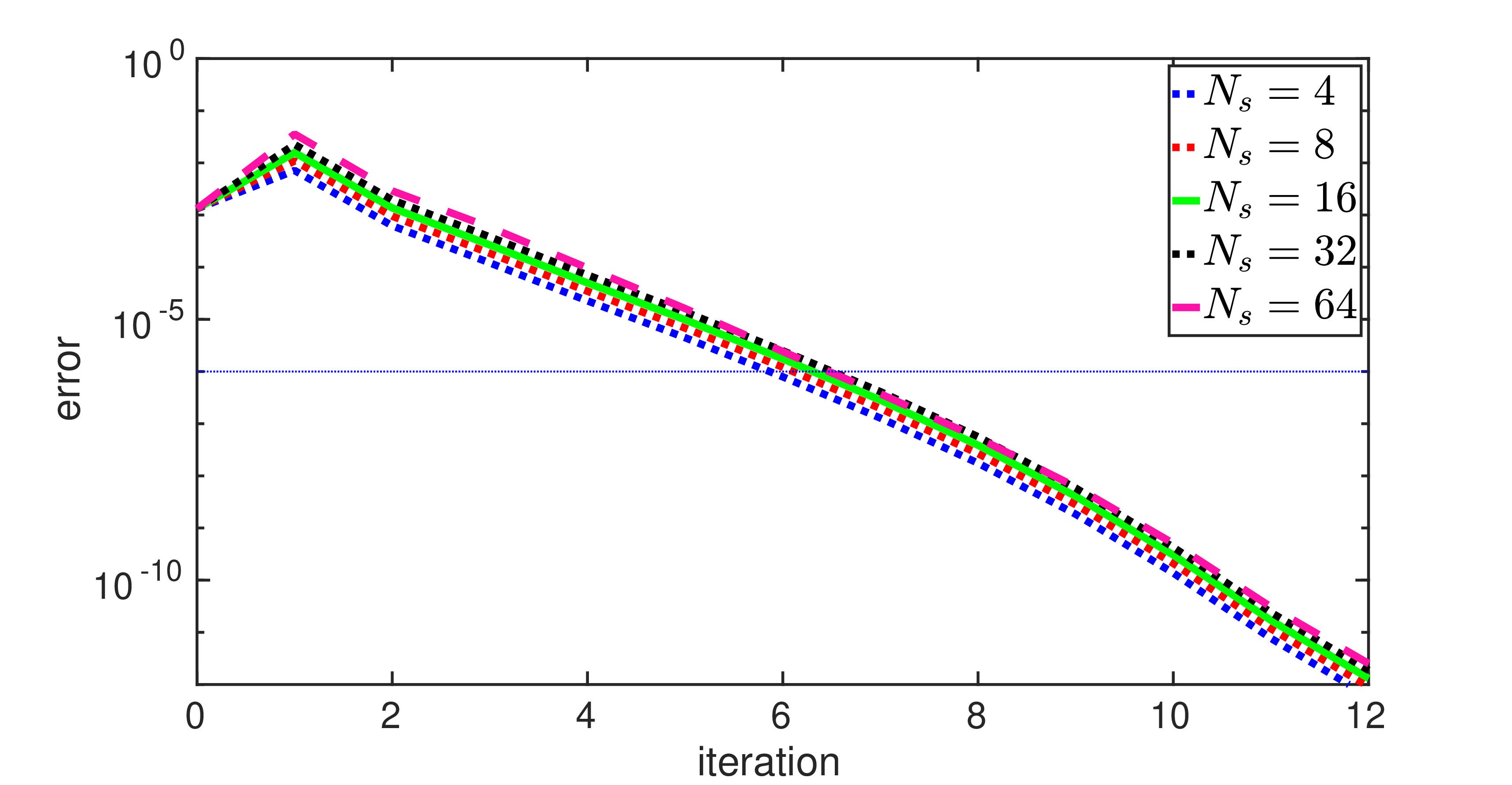} }}
    \caption{ Convergence of Parareal-OSWR with $N=10, T=5, h=0.0098$. On the left: $N_s=64$ and different $K$; On the right: $K=5$ and different $N_s$.}
    \label{pa_oswr_multi_sub}
\end{figure}
On the right panel of Figure \ref{pa_oswr_multi_sub}, we explore the effect of varying $N_s$ while keeping the inner OSWR iteration count fixed at $K=5$. Remarkably, the convergence remains consistent across different numbers of subdomains, indicating the method's stability and adaptability to various spatial decompositions.

\subsubsection{Experiments in 2D}
In our exploration into 2D experiments, we employ the initial data specified in Eq~\eqref{initial-2d}. This data provides the foundation for our investigation into the Parareal-OSWR method's performance in a 2D setting. In the left panel of Figure \ref{pa_oswr_smooth_data_2d}, we visualize the error curves resulting from the application of the Parareal-OSWR method, with $K$ being systematically varied. Notably, the observed convergence behavior bears similarity to that observed in the one-dimensional experiments. This consistency in convergence behavior across dimensions underscores the method's robustness and reliability.
\begin{figure}[h!]
    \centering
    \subfloat{{\includegraphics[height=4cm,width=7cm]{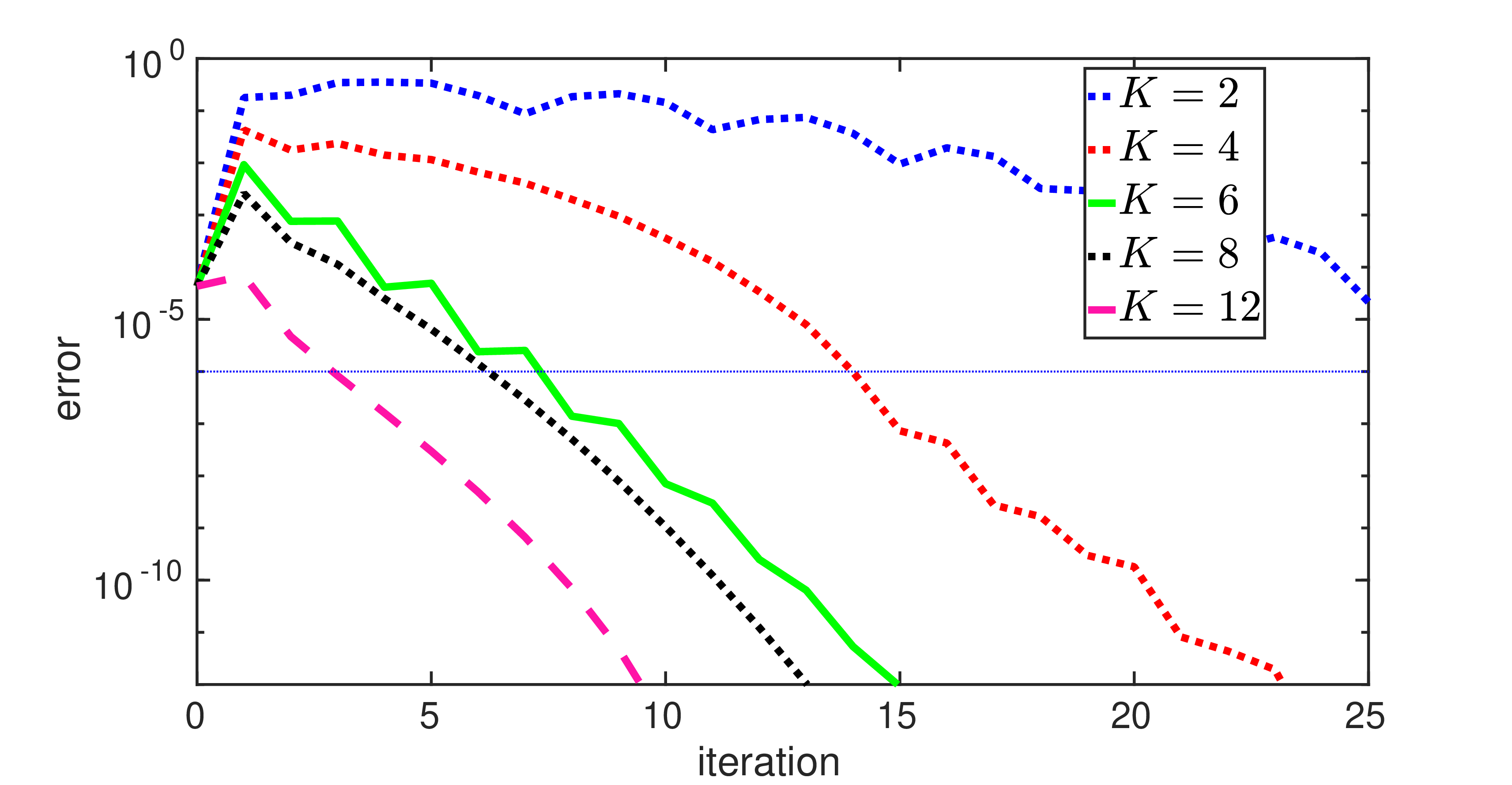} }}
     \subfloat{{\includegraphics[height=4cm,width=7cm]{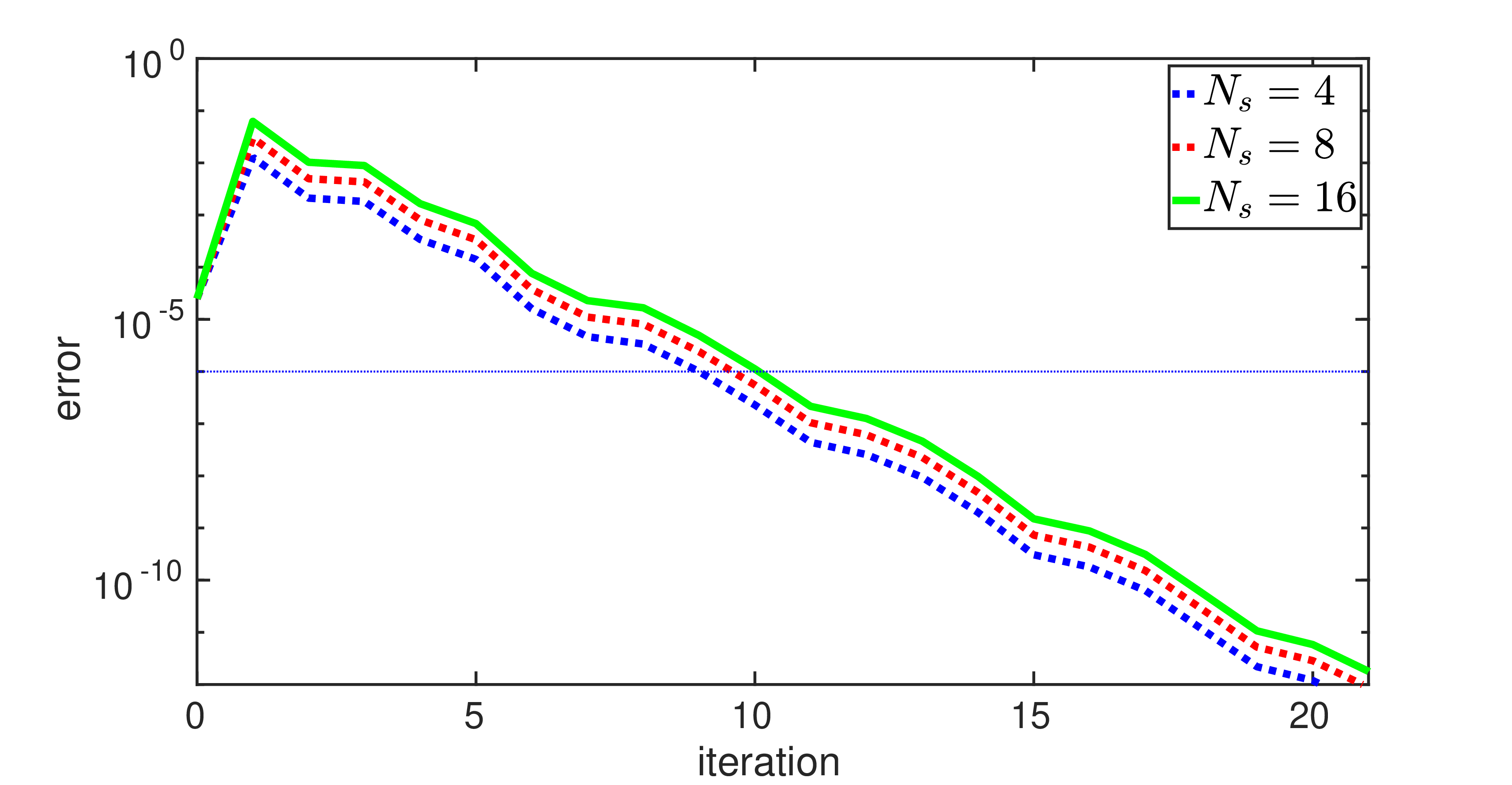}  }}
    \caption{ On the left: convergence of Parareal-OSWR in 2D with $T=1, N=10, \Delta t=0.0067, h=1/64$. $h=1/64$; On the right: Convergence of Parareal-OSWR in 2D for multiple subdomains with $T=1, N=10, h=0.03125, \Delta t=0.0067, K=6$.}
    \label{pa_oswr_smooth_data_2d}
\end{figure}
In our study of numerical simulations involving more than two subdomains in a two-dimensional setting, we define the spatial domain as $\Omega=(0, 4)^2$. Here, we decompose the domain into $N_s$ strip-type subdomains of equal width. In the right panel of Figure~\ref{pa_oswr_smooth_data_2d}, we present the error curves resulting from these experiments for various $N_s$ values, while keeping the inner iteration count fixed at $K=6$. Notably, the plots illustrate a robust convergence behavior across different numbers of subdomains. This consistency in convergence highlights the method's reliability and its ability to maintain effectiveness even in multi-subdomain configurations in two-dimensional simulations.
\subsubsection{Experiments with a non-smooth initial guess} 
For 1D, we use the following initial solution 
\begin{equation}\label{nonsmooth_initial}
u_0 (x) = \begin{cases}
0.3 &\text{for $x\in (0.4, 0.6),$}\\
0 &\text{elsewhere},
\end{cases}
\qquad
\text{ and } \quad v_0=0,
\end{equation}
in $\Omega=(0, 1)$, which corresponds to a sudden shift in the cardiac action potential and then returning to its resting state. We partition the domain at $x=1/2$. In the left and middle panel of Figure~\ref{pa_oswr_nonsmooth_data}, we present the error curves for the Parareal-OSWR method applied to non-smooth initial data as specified in Eq.~\eqref{nonsmooth_initial}, using $h=1/256$ and varying the inner iteration count $K$.
\begin{figure}[h!]
    \centering
    \subfloat{{\includegraphics[height=3.5cm,width=5cm]{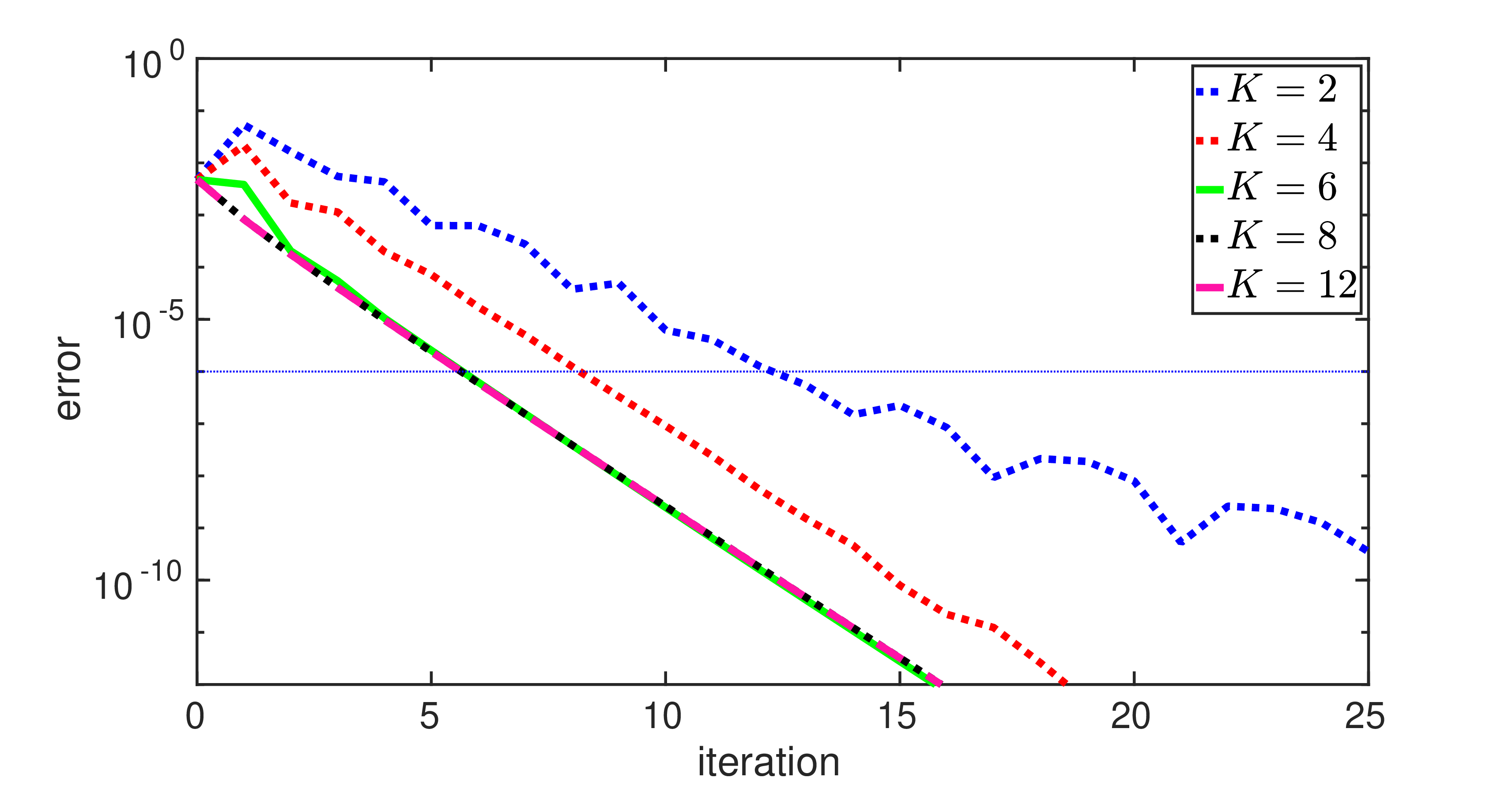} }}
    \subfloat{{\includegraphics[height=3.5cm,width=5cm]{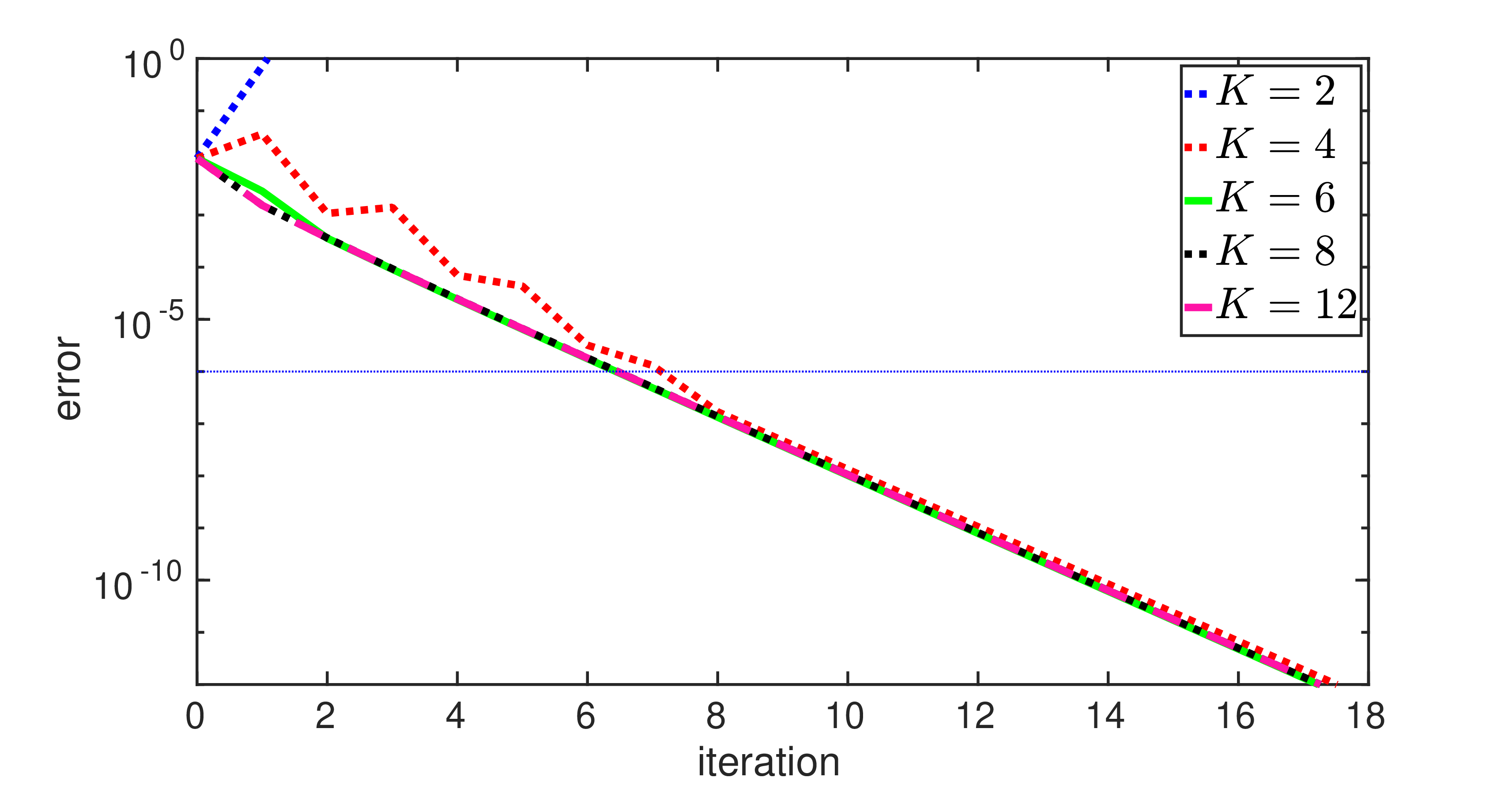} }}
    \subfloat{{\includegraphics[height=3.5cm,width=5cm]{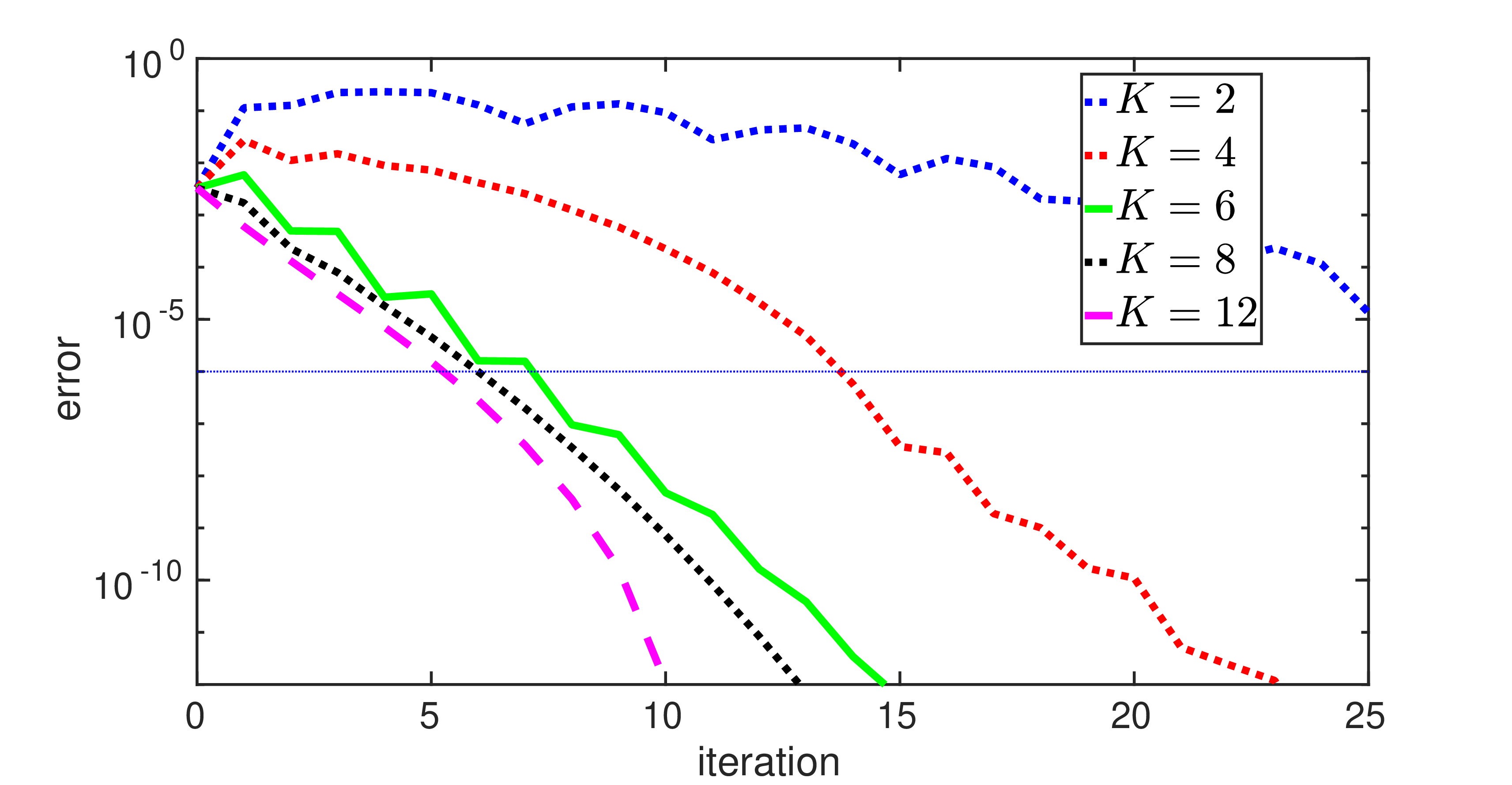} }}
    \caption{ Convergence of Parareal-OSWR. On the left: $T=4, N=40$ in 1D; On the middle: $T=600, N=600$ in 1D; On the right: $T=1, N=10, h=1/64$ in 2D.}
    \label{pa_oswr_nonsmooth_data}
\end{figure}
The results indicate that the method performs effectively over very large time windows, with the exception of the case where $K=2$. This demonstrates the robustness of the Parareal-OSWR method in handling non-smooth initial conditions, provided an adequate number of inner iterations are used.

\noindent For 2D, we consider the following initial guess 
\begin{equation}\label{nonsmooth_initial_2d}
u_0 = \begin{cases}
0.3 &\text{for $(x,y)\in (0.4, 0.6)^2,$}\\
0 &\text{elsewhere},
\end{cases}
\qquad
\text{ and } \quad v_0=0,
\end{equation}
in $\Omega=(0, 1)^2$, with $\Omega_1=(0, 1/2)\times(0, 1)$ and $\Omega_2=(1/2, 1)\times(0, 1)$. 
In the right panel of Figure \ref{pa_oswr_nonsmooth_data}, we illustrate the convergence behavior of the Parareal-OSWR method across different values of $K$. The displayed results indicate a clear trend that the method's performance improves significantly with an increased number of inner iterations $K$. 
\subsubsection{Verification of Solution}
In this section, we present the numerical solution of our model. Figure \ref{solution_profile} presents the numerical solution in 1D corresponding to the initial condition \eqref{nonsmooth_initial}, where the traveling wave behavior of the solution is clearly observed. The proposed method accurately captures this wave propagation.
\begin{figure}[h!]
    \centering
    \subfloat{{\includegraphics[height=3cm,width=7cm]{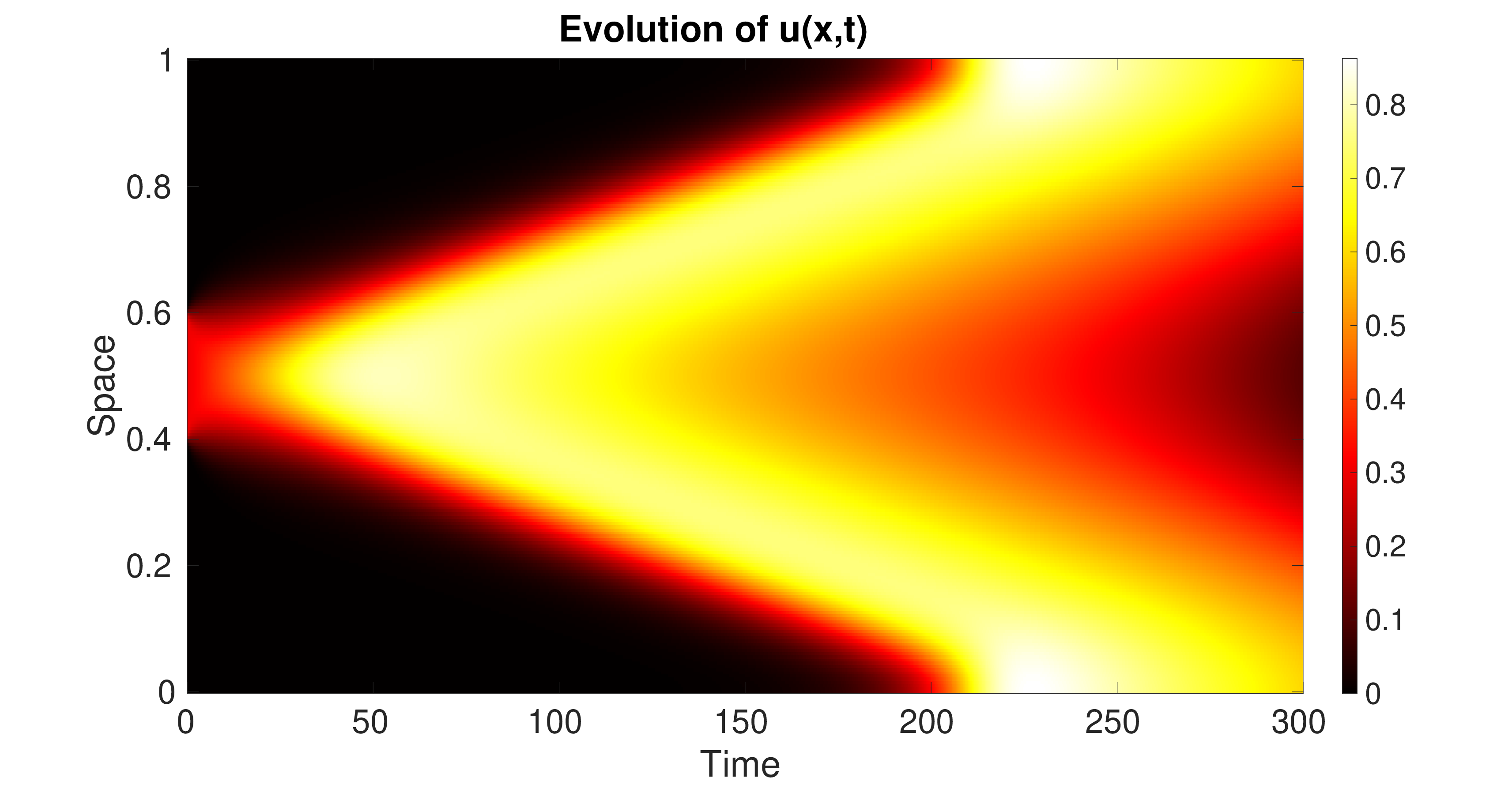} }}
    \subfloat{{\includegraphics[height=3cm,width=7cm]{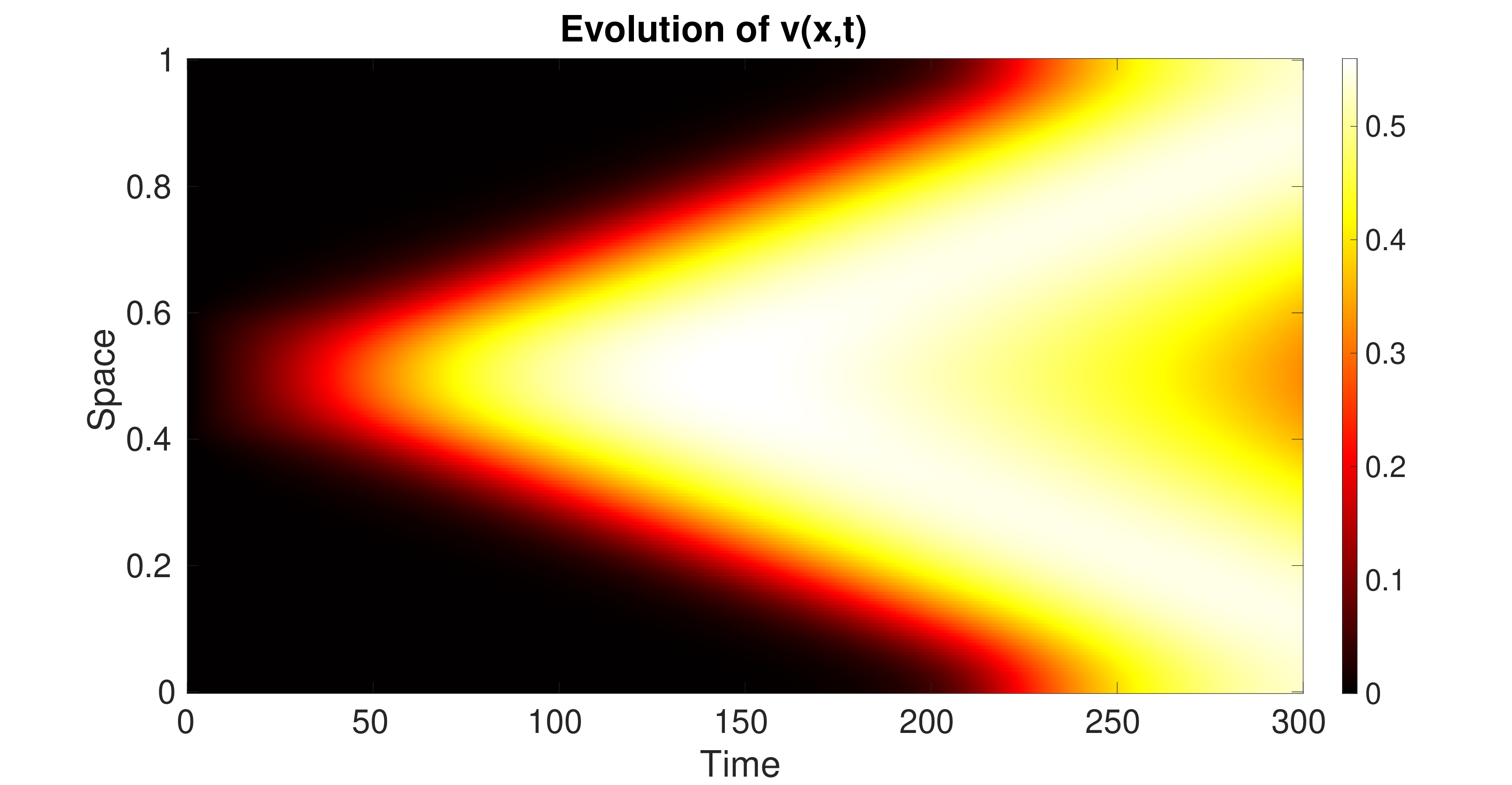} }}
    \caption{ On the left: Evolution of $u(x,t)$ up-to $T=300$ with $\gamma=0.0001$;  On the right: Evolution of $v(x,t)$ up-to $T=300$ with $\gamma=0.0001$.}
    \label{solution_profile}
\end{figure}
In 2D, we define the initial conditions in \(\Omega = (0, 10)^2\) using the distance functions  
\(r_1(x, y) = \sqrt{(x - 7.5)^2 + (y - 5)^2}\),  
\(r_2(x, y) = \sqrt{(x - 5)^2 + (y - 7.5)^2}\),  
as follows:
\[
\begin{aligned}
u_0  &= 
\begin{cases}
0.8, & r_1 \geq 5, \\
0.8\left(1 - \exp\left(0.04 - \dfrac{1}{(r_1 - 5)^2}\right)\right), & r_1 < 5,
\end{cases}\;\;
v_0 = 
\begin{cases}
1, & r_2 \geq 5, \\
1 - \exp\left(0.04 - \dfrac{1}{(r_2 - 5)^2}\right), & r_2 < 5.
\end{cases}
\end{aligned}
\]
Figure \ref{fig:all_plots} illustrates the formation of spiral waves or traveling wavefronts in the computed solution. This demonstrates that the proposed Parareal-OSWR approach accurately captures the complex spatiotemporal dynamics of the RM model.
\begin{figure}[htbp]
    \centering
   \includegraphics[scale=0.12]{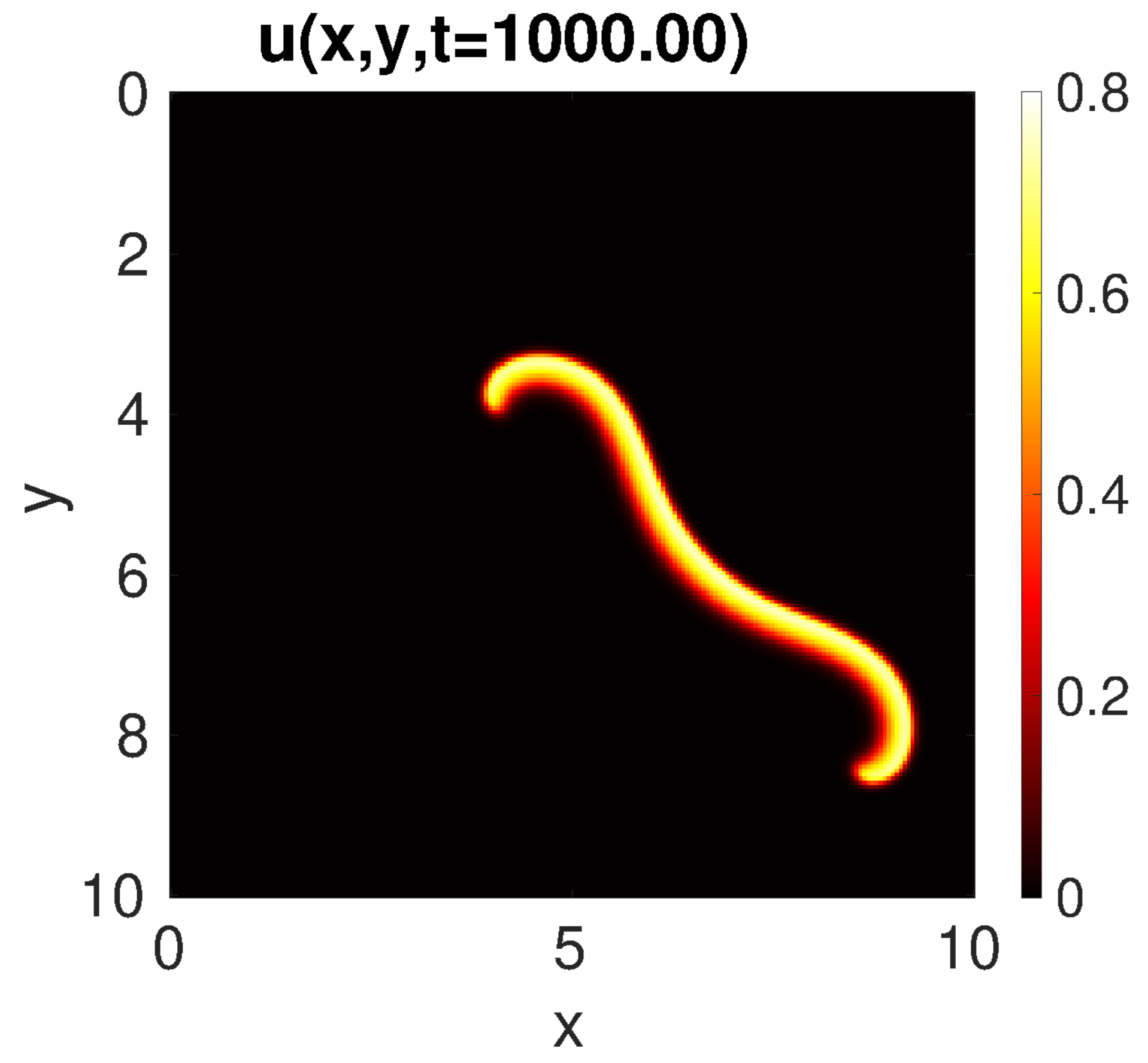}%
    \includegraphics[scale=0.12]{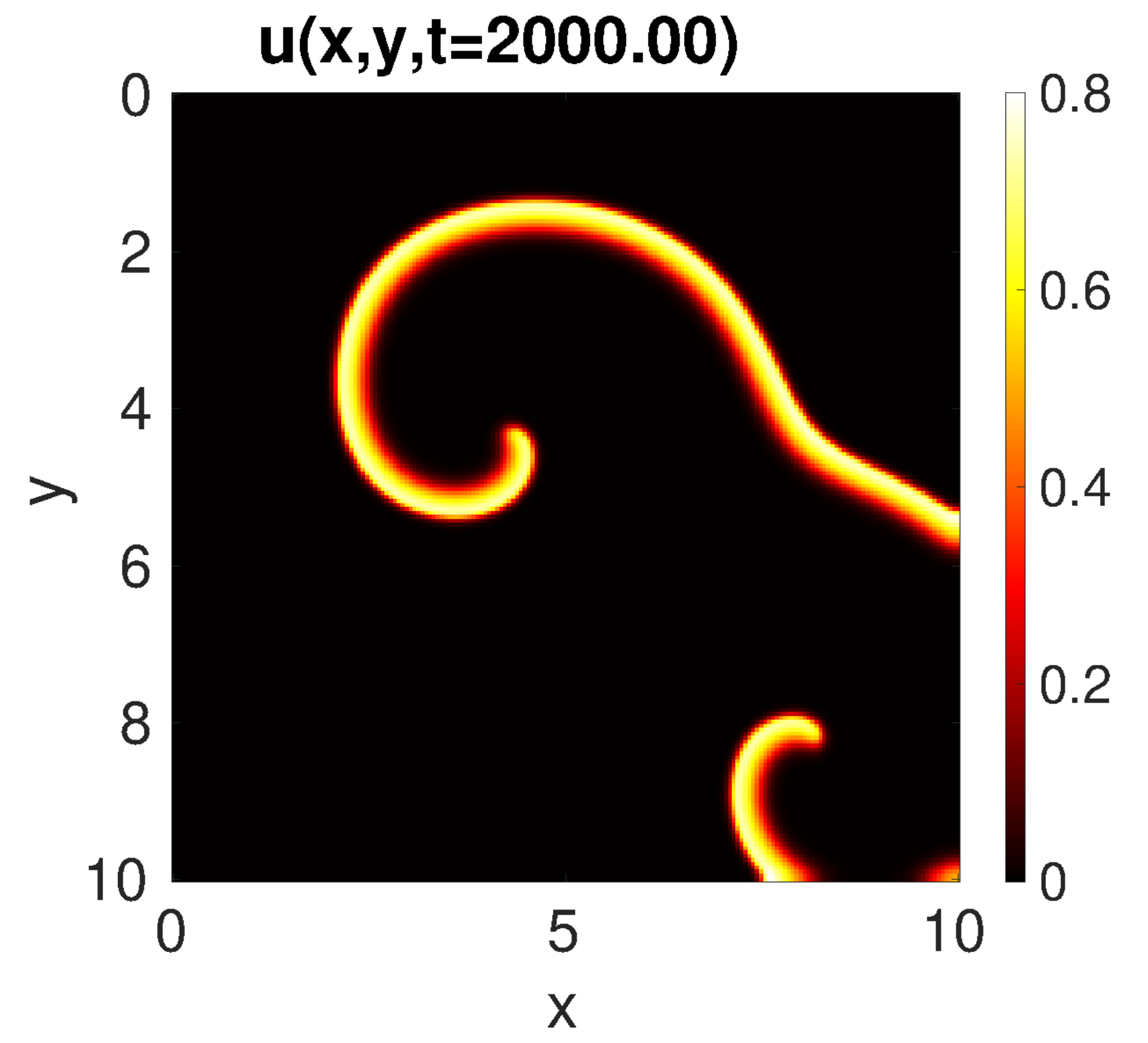}%
     \includegraphics[scale=0.12]{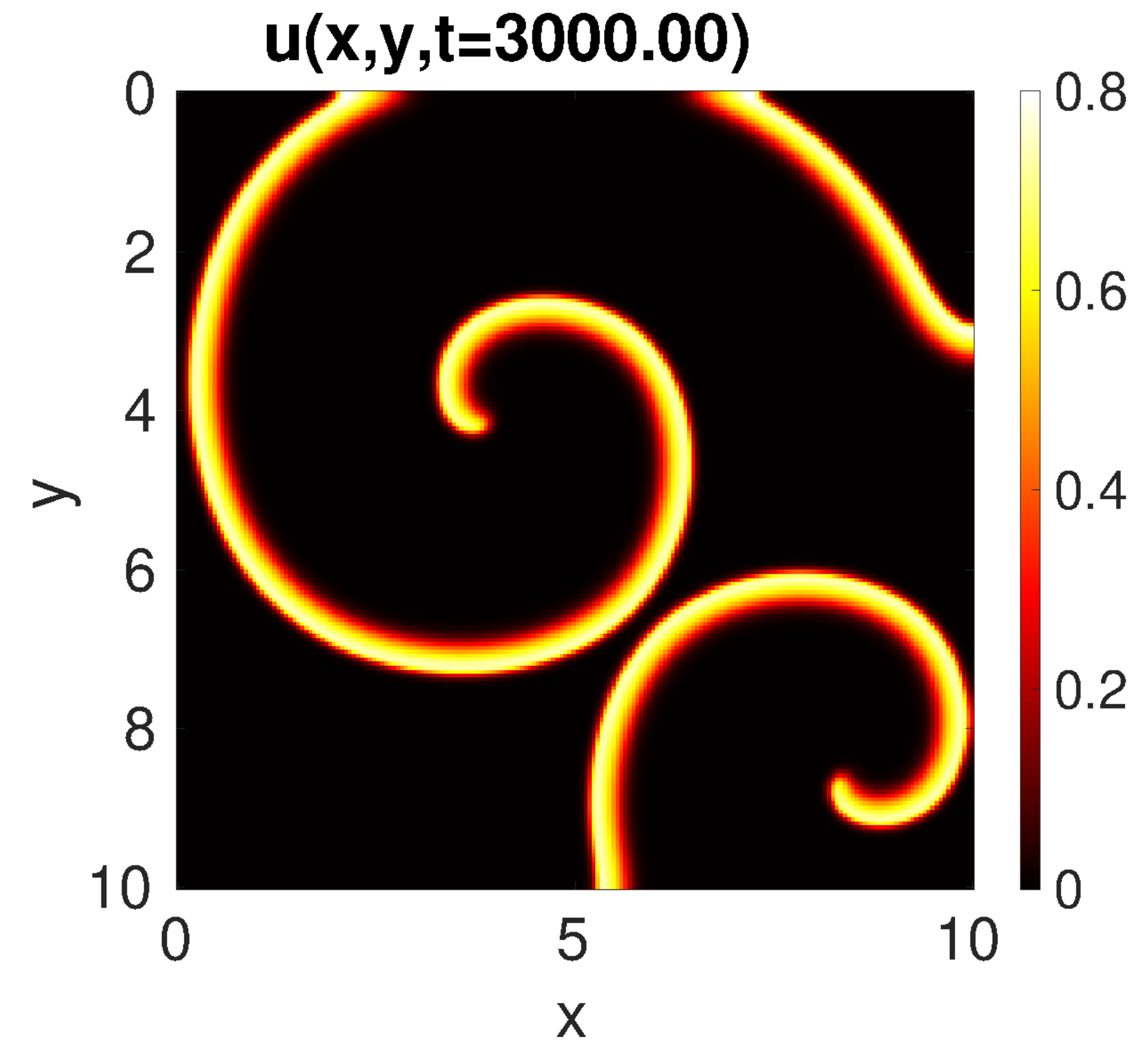}%
    \includegraphics[scale=0.12]{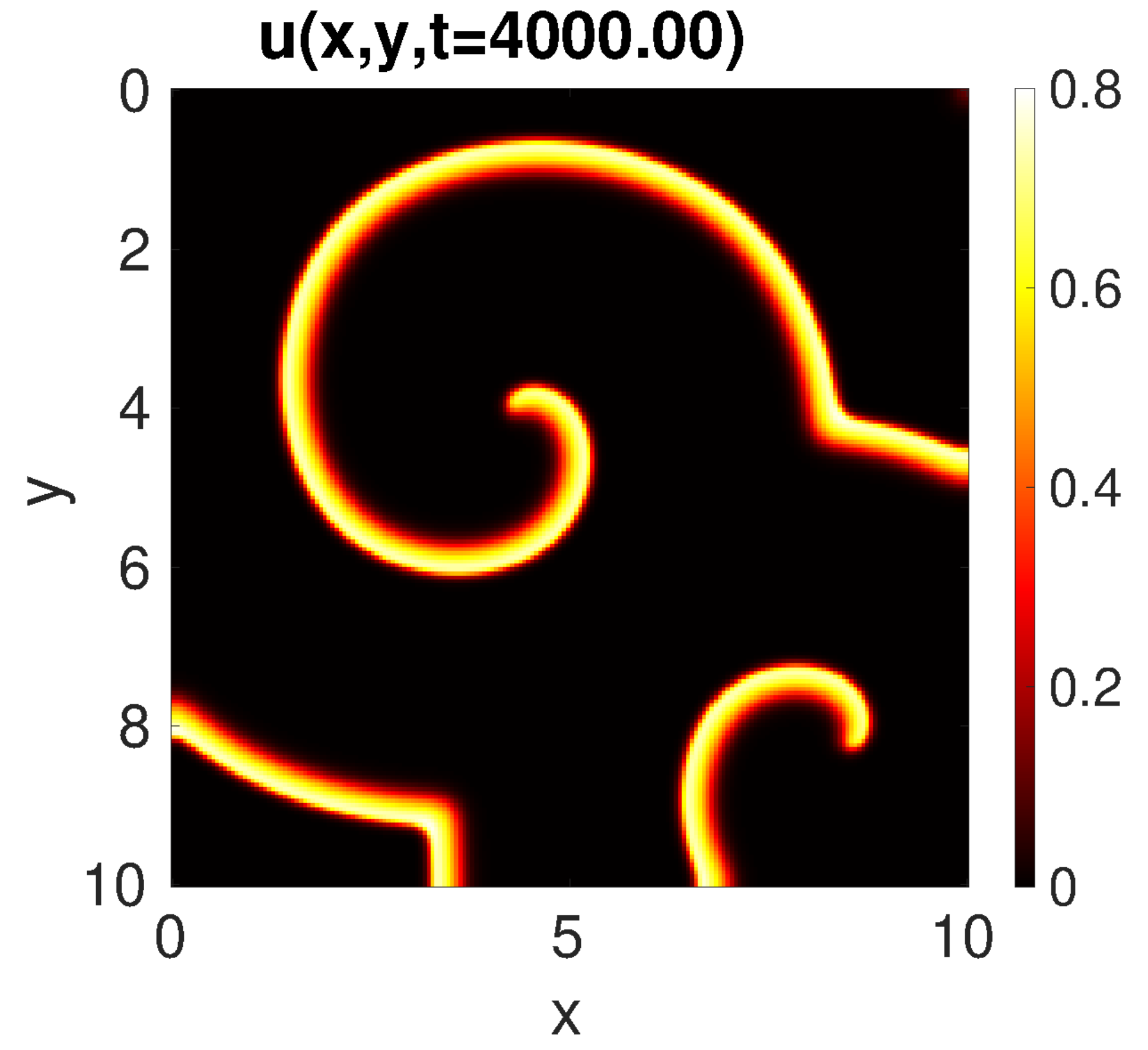}

    \includegraphics[scale=0.12]{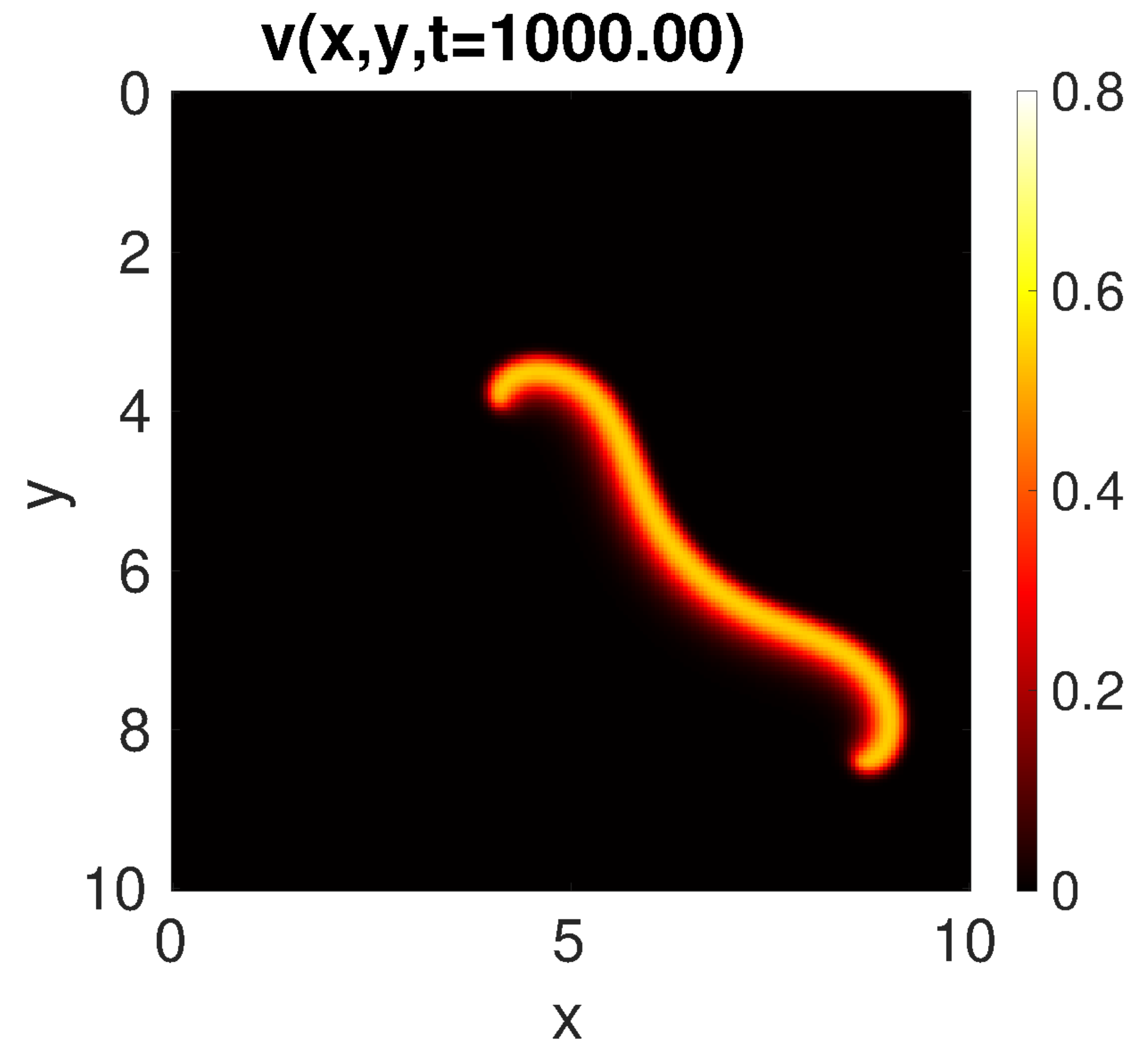}%
    \includegraphics[scale=0.12]{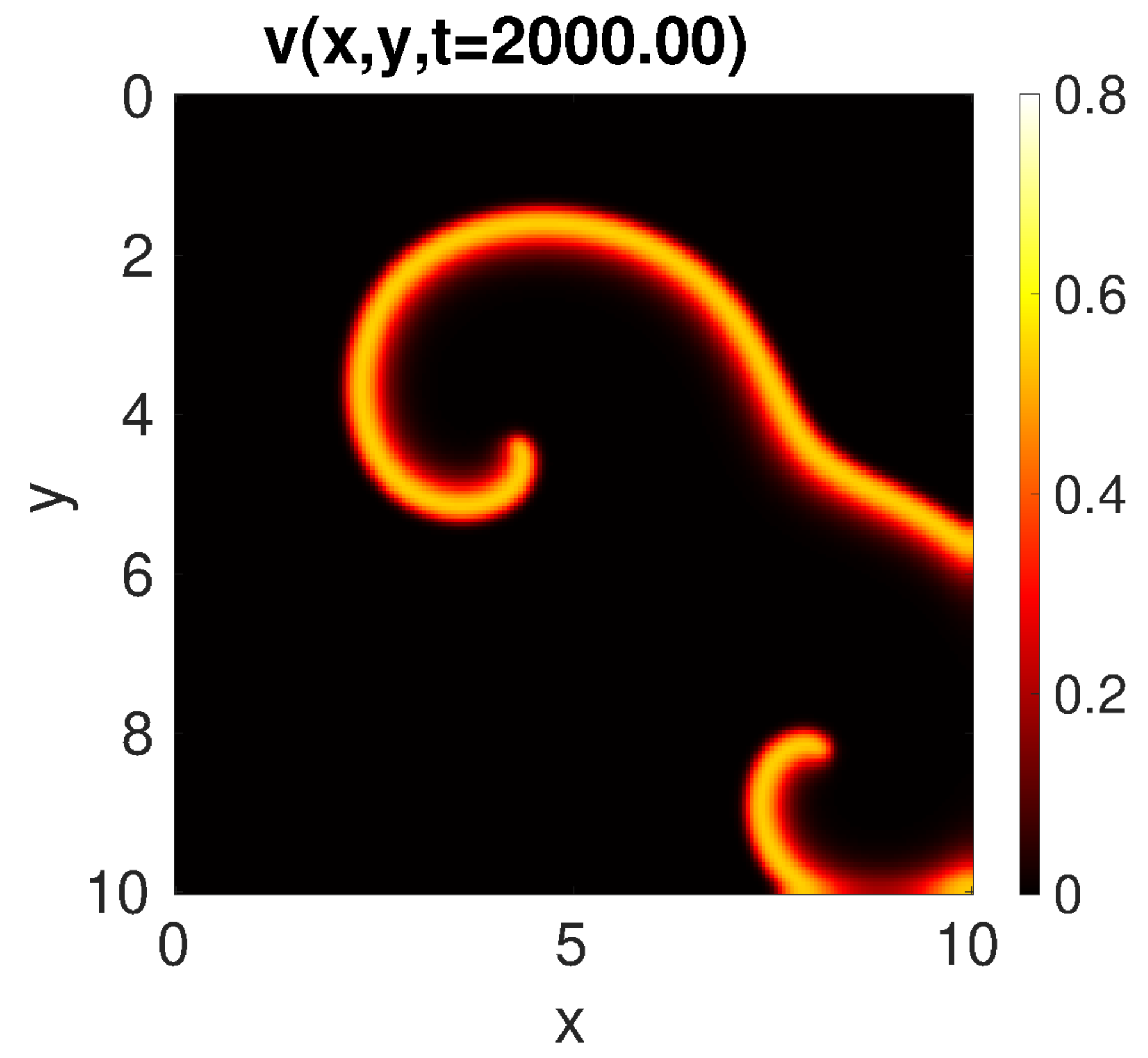}%
     \includegraphics[scale=0.12]{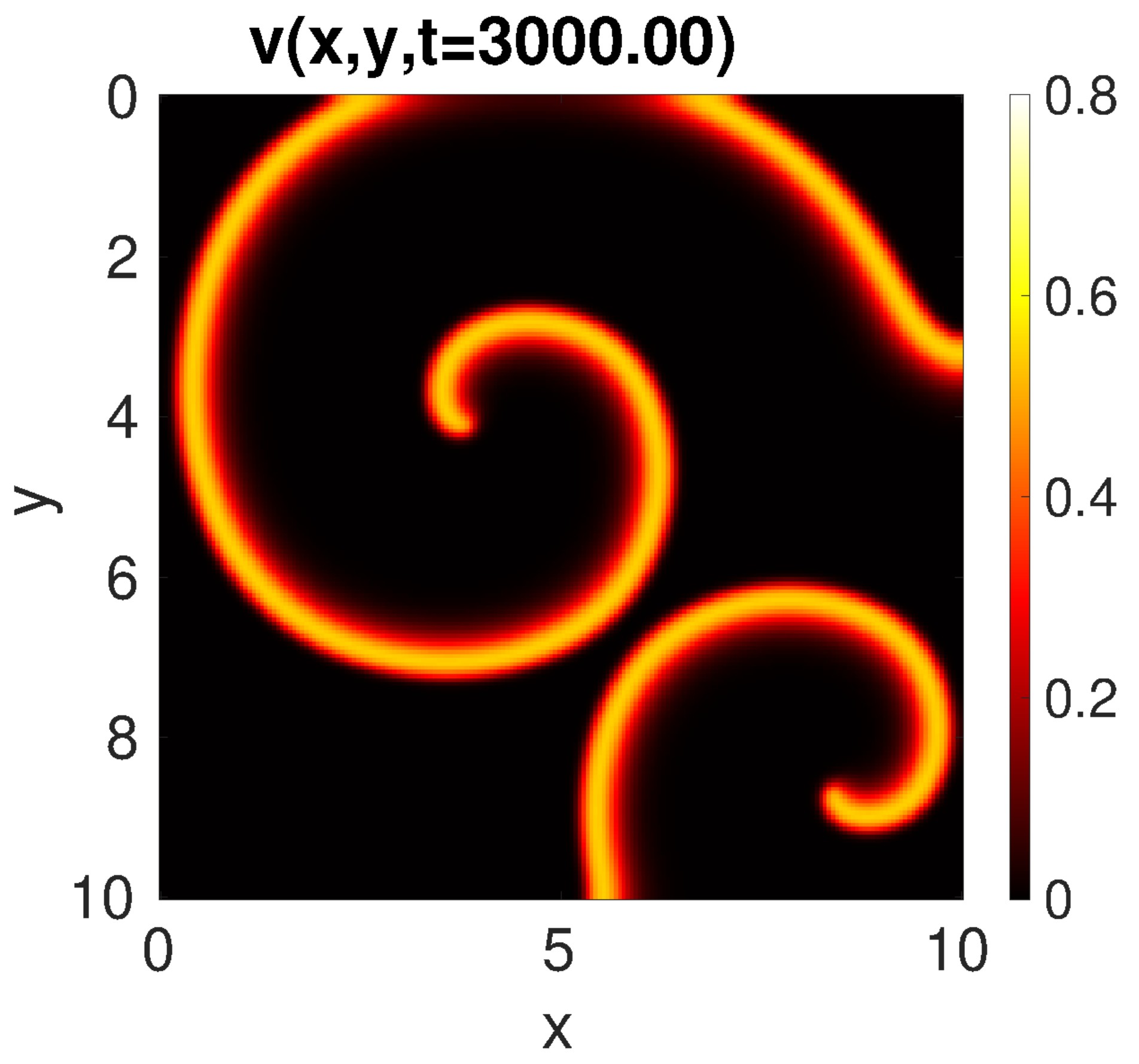}%
    \includegraphics[scale=0.12]{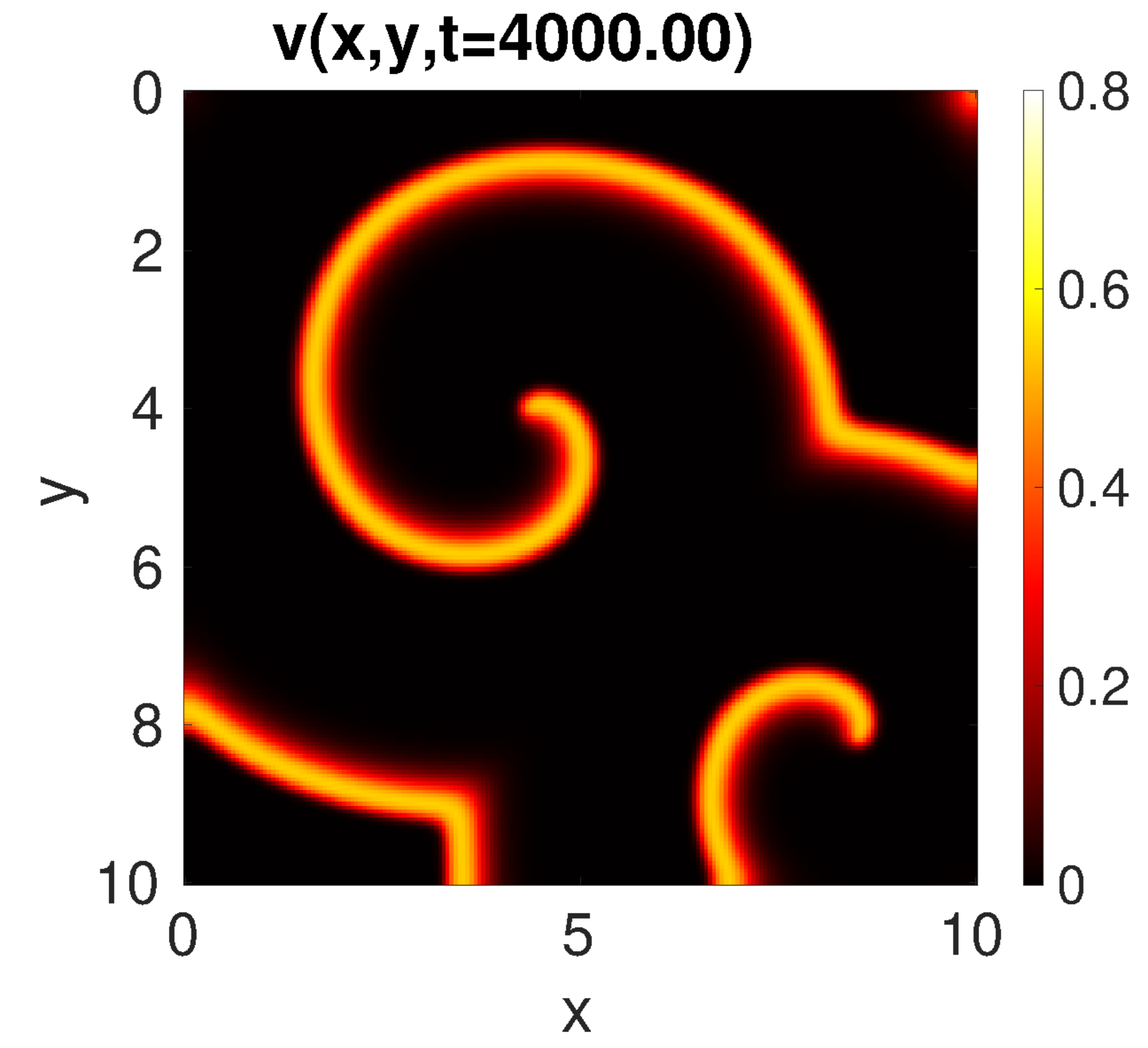}

    \caption{Row 1 show snapshots of $u$, and row 2 of $v$, at different times with $\gamma = 0.0001, h=1/20$.
}
    \label{fig:all_plots}
\end{figure}

\section{Conclusions}
In this work, we developed and analyzed the Parareal approach combined with the incomplete OSWR method for the nonlinear coupled reaction-diffusion type system, namely, the Rogers-McCulloch (RM) model. Initially, we formulated the OSWR technique specifically for the RM model and conducted a detailed convergence study. Additionally, we provided an analytical formulation for the Robin parameter used in the OSWR method.

We then introduced the nonlinear Parareal-incomplete OSWR method, a parallel space-time procedure designed to solve the RM model. The convergence results for this method were presented and validated through numerical experiments with various initial profiles. Notably, the numerical results demonstrate robust convergence behavior across different numbers of space-time subdomains, highlighting the method's reliability. This consistency underscores the effectiveness of the Parareal-incomplete OSWR method, even in complex multi-subdomain configurations within two-dimensional simulations.



\section*{Acknowledgement} 
We gratefully acknowledge the financial support under R\&D projects leading to HPC Applications (DST/NSM/R\&D\_HPC\_Applications/Extension/2023/33), National Supercomputing Mission, India. 
\subsection*{Competing interests}
 The authors declare no competing interests.

\bibliographystyle{plain}
\bibliography{PA_OSWR_bibfile}

\end{document}